\newtheorem{theo}{Theorem}[section]
\newtheorem{pro}[theo]{Proposition}
\newtheorem{lem}[theo]{Lemma}
\newtheorem{cor}[theo]{Corollary}
\newcommand{\ra}{\rightarrow}
\theoremstyle{definition}
\newtheorem{defin}[theo]{Definition}
\newtheorem{exa}{Example}[section]
\theoremstyle{remark}
\newtheorem{rem}[theo]{Remark}
\begin{document}

\title{Random walks on free solvable groups}
\author{Laurent Saloff-Coste\thanks{%
Both authors partially supported by NSF grant DMS 1004771} \\
{\small Department of mathematics}\\
{\small Cornell University} \and Tianyi Zheng \\
{\small Department of Mathematics}\\
{\small Cornell University} }
\maketitle

\begin{abstract}
For any finitely generated group $G$, let $n\mapsto \Phi_G(n)$ 
be the function that describes the rough asymptotic behavior of the 
probability of return to the identity element at 
time $2n$ of a symmetric simple random walk on $G$ (this  
is an invariant of quasi-isometry). We determine this function
when $G$ is the free solvable group $\mathbf{S}_{d,r}$
of derived length $d$ on $r$ generators and some other related groups. 
\end{abstract}

\noindent{\bf Math Subject Classification:  20F69,  60J10} 

\section{Introduction} \setcounter{equation}{0}
\subsection{The random walk group invariant $\Phi_G$}
Let $G$ be a finitely generated group. Given a probability measure $\mu$ on $G$, 
the random walk driven by $\mu$ (started at the identity element $e$ of $G$)
is the $G$-valued random process $X_n=\xi_1\dots \xi_n$ where 
$(\xi_i)_1^\infty$ is a  sequence of independent identically distributed 
$G$-valued random variables with law $\mu$. If $u*v(g)=\sum_h u(h)v(h^{-1}g)$ 
denotes the convolution of two functions $u$ and $v$ on $G$ then the probability
that $X_n=g$ is given by $\mathbf P^\mu_e(X_n=g)=\mu^{(n)}(g)$ where $\mu^{(n)}$ 
is the $n$-fold convolution of $\mu$. 

Given a symmetric set of generators $S$, the word-length $|g|$ of $g\in G$
is the minimal length of a word representing $g$ in the elements of $S$. 
The associated volume growth function, $r\mapsto V_{G,S}(r)$, 
counts the number of elements of $G$ with $|g|\le r$. 
The word-length induces a left invariant metric on $G$ which is also the graph 
metric on the Cayley graph $(G,S)$. 
A quasi-isometry between two Cayley graghs $(G_i,S_i)$, $i=1,2$, say, from $G_1$ to $G_2$, 
is a map $q: G_1\ra G_2$ such that 
$$C^{-1}d_2(q(x),q(y))\le d_1(x,y)\le C(1+d_2(q(x),q(y)))$$  and 
$\sup_{g,\in G_2}\{d_2(g,q(G_1))\le C$ for some finite positive constant $C$. This induces an equivalence 
relation on Cayley graphs. In particular, 
$(G,S_1),(G,S_2)$ are quasi-isometric for 
any choice of generating sets $S_1,S_2$. See, e.g., \cite{dlH} for more details. 

Given two monotone functions $\phi,\psi$, write $\phi\simeq \psi$ 
if there are constants
$c_i\in (0,\infty)$, $1\le i\le 4$, 
such that $c_1\psi(c_2t)\le  \phi(t)\le c_3 \psi(c_4t)$ 
(using integer values if $\phi,\psi$ are defined on $\mathbb N$). 
If $S_1,S_2$ are two symmetric generating sets for $G$, 
then $V_{G,S_1}\simeq V_{G,S_2}$. We use the notation $V_G$ to denote either 
the $\simeq$-equivalence class of $V_{G,S}$ or any one of its representatives. 
The volume growth function $V_G$ is one of the simplest quasi-isometry invariant 
of a group $G$.

By \cite[Theorem 1.4]{Pittet2000}, if $\mu_i$, $i=1,2$, are 
symmetric (i.e., $\mu_i(g)=\mu_i(g^{-1})$ for all $g\in G$) finitely supported
probability measures with generating support, 
then the functions $n\mapsto \phi_i(n)=\mu_i^{(2n)}(e)$ satisfy 
$\phi_1\simeq \phi_2$. By definition, we denote by $\Phi_G$ any function
that belongs to the $\simeq$-equivalence class of $\phi_1\simeq\phi_2$.  
In fact, $\Phi_G$ is an invariant of quasi-isometry.
Further, if $\mu$ is a symmetric probability measure with generating support and finite second 
moment $\sum_G|g|^2\mu(g)<\infty$ then $\mu^{(2n)}(e)\simeq \Phi_G(n)$.
See \cite{Pittet2000}.

\subsection{Free solvable groups}
This work is concerned with finitely generated solvable groups. 
Recall that $G^{(i)}$, the derived series of $G$, is defined inductively by $G^{(0)}=G$, 
$G^{(i)}=[G^{(i-1)},G^{(i-1)}]$. A group is solvable if $G^{(i)}=\{e\}$ 
for some $i$ and the smallest such $i$ is the derived length of $G$. 
A group $G$ 
is polycyclic if it admits a normal descending  series 
$G=N_0\supset N_1\supset\dots\supset N_k=\{e\}$ such that each of 
the quotient $N_i/N_{i+1}$ is cyclic.  The 
lower central series $\gamma_j(G)$, $j\ge 1$, of a group $G$ is obtained by setting
$\gamma_1(G)=G$ and $\gamma_{j+1}=[G,\gamma_j(G)]$. A group $G$ is nilpotent 
of nilpotent class $c$ if $\gamma_c(G)\neq\{e\}$ and $\gamma_{c+1}(G)=\{e\}$.
Finitely generated nilpotent groups are 
polycyclic and polycyclic groups are solvable. 

Recall the following well-known facts.
If $G$ is a finitely generated solvable group then either 
$G$ has polynomial volume growth $V_G(n)\simeq n^D$ for some $D=0,1,2,\dots$, or
$G$ has exponential volume growth $V_G(n)\simeq \exp(n)$. 
See, e.g., \cite{dlH} and the references therein. 
If $V_G(n)\simeq n^D$ then $G$ is virtually nilpotent and $\Phi_G(n)\simeq n^{-D/2}$.
If $G$ is polycyclic with 
exponential volume growth then $\Phi_G(n)\simeq \exp(-n^{1/3})$. 
See \cite{Alexpol,Hebish1993,Varnil,VarSendai,VSCC} and the references given there.
However, among solvable groups
of exponential volume growth, many other behaviors than those described 
above are known to occur. 
See, e.g., 
\cite{Erschler2006,Pittet2002,SCnotices}. 
Our main result is the following theorem. Set 
$$\log_{[1]}n= \log (1+n) \mbox{ and } \log_{[i]}(n)=\log (1+ \log_{[i-1]} n).$$

\begin{theo} \label{th-freesol}
Let $\mathbf{S}_{d,r}$ be the free solvable group of derived length $d$ on $r$ generators, 
that is, $\mathbf{S}_{d,r}=\mathbf{F}_r/\mathbf{F}_r^{(d)}$ where 
$\mathbf{F}_r$ is the free group on $r$ generators, $r\ge 2$.
\begin{itemize}
\item If $d=2$ (the free metabelian case) then
$$\Phi_{\mathbf{S}_{2,r}}(n)\simeq \exp \left(- n^{r/(r+2)}(\log n)^{2/(r+2)}\right).$$
\item If $d>2$  then
$$\Phi_{\mathbf{S}_{d,r}}(n)\simeq \exp \left(- n \left(
\frac{\log_{[d-1]} n}{\log_{[d-2]} n}\right)^{2/r}\right).$$
\end{itemize}
\end{theo}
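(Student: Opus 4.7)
My plan is to proceed by induction on the derived length $d$, with the classical Magnus embedding
\[
\mathbf{S}_{d,r}\hookrightarrow \mathbb{Z}^r\wr \mathbf{S}_{d-1,r}
\]
as the main tool. The base case $d=1$ is $\mathbf{S}_{1,r}=\mathbb{Z}^r$ with $\Phi(n)\simeq n^{-r/2}$; the target formulas match those known for suitable iterated wreath products of $\mathbb{Z}^r$'s, so the role of the Magnus embedding is precisely to let the free solvable group inherit the sharp random-walk behaviour of an ambient wreath product. Under the embedding, a word in the standard generators is described by (i)~its projection to $\mathbf{S}_{d-1,r}$ (the ``trajectory''), and (ii)~a finitely-supported $\mathbb{Z}^r$-valued ``lamp'' on $\mathbf{S}_{d-1,r}$ recording, at each vertex $v$, the signed generator counts made while at $v$ (the Fox derivative picture). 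A word equals $e$ in $\mathbf{S}_{d,r}$ iff the trajectory returns and every lamp vanishes, so $\Phi_{\mathbf{S}_{d,r}}(n)$ is the joint probability of these two events.

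To extract both bounds on $\Phi_{\mathbf{S}_{d,r}}$, I would pass through the Coulhon--Saloff-Coste equivalence between $\Phi$ and the Følner (isoperimetric) profile. For the lower bound on $\Phi_{\mathbf{S}_{d,r}}$ (i.e.\ upper bound on the Følner function), pull back Erschler-style Følner sets from the wreath product---those of the form ``lamps supported on a Følner set $\Omega\subset \mathbf{S}_{d-1,r}$, with values in a cube $[-k,k]^r$''---and balance $k$ against $|\Omega|$ to hit the claimed exponent, using inductive volume and isoperimetric data in $\mathbf{S}_{d-1,r}$. For the matching upper bound on $\Phi_{\mathbf{S}_{d,r}}$, prove a reverse isoperimetric inequality inside the Magnus image: any finite subset of $\mathbf{S}_{d,r}$ determines, via the Fox-derivative description, enough distinct (trajectory, lamp) configurations for an isoperimetric lower bound in the ambient wreath product to transfer. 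This requires inductive range and local-time estimates for the walk on $\mathbf{S}_{d-1,r}$, which govern the possible lamp cancellations. The bifurcation between $d=2$ and $d>2$ in the theorem reflects the fact that in the former case the relevant range and local-time estimates are classical (on $\mathbb{Z}^r$), whereas for $d>2$ the nested logarithms accumulate through the induction.

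The main obstacle I anticipate is the isoperimetric lower bound in the subgroup: Erschler-style arguments apply naturally to the full wreath product $\mathbb{Z}^r\wr \mathbf{S}_{d-1,r}$, while here one must argue that the image of the Magnus embedding is ``rich enough'' to inherit the sharp isoperimetric behaviour. Carrying this through, together with the nested induction on $d$ (simultaneously for $\Phi_{\mathbf{S}_{d-1,r}}$ and for the quantitative ball, range, and local-time estimates in $\mathbf{S}_{d-1,r}$), is the technical core of the proof.
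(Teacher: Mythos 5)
Your lower-bound strategy matches the paper's: the Magnus embedding realizes $\mathbf{S}_{d,r}$ as a subgroup of $\mathbb{Z}^r\wr\mathbf{S}_{d-1,r}$, return probabilities only increase when passing to a subgroup, and the wreath-product lower bound comes from Erschler-style F\o lner couples, iterated through $d$ levels with Dirichlet form comparisons. That part is sound.

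For the upper bound, however, you have correctly named the obstacle but not resolved it, and the gap is genuine. Your plan is to show the Magnus image is ``rich enough'' so that an isoperimetric lower bound for $\mathbb{Z}^r\wr\mathbf{S}_{d-1,r}$ transfers to the subgroup; but a subgroup inclusion gives no upper bound on $\Phi$ in general, and the ``reverse isoperimetric inequality inside the Magnus image'' you invoke is exactly the thing one cannot prove by counting (trajectory, lamp) configurations --- the image is a proper, quite thin, subgroup, and there is no direct way to see that its F\o lner function is as large as that of the ambient wreath product. The paper sidesteps this entirely. It introduces the notion of an \emph{exclusive pair} $(\Gamma,\boldsymbol\rho)$ (Definition~\ref{def-exclu}): a subgroup $\Gamma<\Gamma_2(N)$ and an element $\boldsymbol\rho\in N\setminus[N,N]$ chosen so that the translates $\tau_{\bar g}\bar a(\rho)$, $\bar g\in\overline\Gamma$, are $\mathbb{Z}$-independent and span a submodule meeting $\{\bar a(g):g\in\Gamma\}$ trivially. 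These conditions are precisely what is needed to build a \emph{surjective} homomorphism $\vartheta:\langle\Gamma,\rho\rangle\to\mathbb{Z}\wr\overline\Gamma$ (Lemma~\ref{lem-vartheta}); since upper bounds on return probabilities do pass through quotients, combining $\vartheta$ with a Dirichlet-form comparison between the simple random walk and $\nu*\varphi*\nu$ yields $\Phi_{\Gamma_2(N)}(kn)\le C\,\Phi_{\mathbb{Z}\wr\overline\Gamma}(n)$ (Theorem~\ref{theo-Upper}, Proposition~\ref{Upper}). One then checks (using residual finiteness, Lemma~\ref{lem-resf}, and the congruence-subgroup device in Proposition~\ref{pro-resf}) that $\overline\Gamma$ can be chosen of finite index, so the upper and lower bounds match. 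For $d\ge3$ the whole construction must be iterated via an ``exclusive sequence'' (Theorem~\ref{theo-itup}); this requires a further lemma (Lemma~\ref{lem-stretch}) showing the stretching map $\delta_m:\mathbf{s}_i\mapsto\mathbf{s}_i^m$ induces injective endomorphisms compatible with the levels, and it is here that one really uses that $N=[\mathbf{F}_r,\mathbf{F}_r]$ so that $\Gamma'_1\cong\mathbb{Z}^r$. None of this machinery is present, even in outline, in your proposal; without it, the upper bound does not go through.
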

In the case $d=2$, this result is known and due 
to Anna Erschler who computed the 
F\o lner function of $\mathbf{S}_{2,r}$ in an unpublished work based on the ideas
developed in \cite{Erschler2006}. 
We give a different proof.   
As far as we know, the F\o lner function of $\mathbf{S}_{d,r}$, $d>2$ 
is not known.

Note that if $G$ is $r$-generated and solvable of length at most $d$ then
there exists $c,k\in(0,\infty)$ such that
$\Phi_G(n)\ge c\Phi_{\mathbf{S}_{d,r}}(kn)$.

\subsection{On the groups of the form $\mathbf{F}_r/[N,N]$}
The first statement in Theorem \ref{th-freesol} 
can be generalized as follows. Let 
$N$ be a normal subgroup of $\mathbf{F}_r$ and consider the tower 
of $r$ generated groups $\Gamma_d(N)$ defined by 
$\Gamma_d(N)=\mathbf{F}_r/N^{(d)}$.
Given information about $\Gamma_1(N)=\mathbf{F}_r/N$, more precisely, about the pair $(\mathbf F_r,N)$,
one may hope to determine 
$\Phi_{\Gamma_d(N)}$ (in Theorem \ref{th-freesol}, $N=[\mathbf F_r,\mathbf F_r]$ 
and  $\Gamma_1(N)=\mathbb Z^r$). 
The following theorem captures some of the results we obtain in 
this direction when $d=2$.
Further examples are given in Section \ref{sec-misc}.
\begin{theo} \label{th-NNN}
Let $N \unlhd \mathbf{F}_r$, $\Gamma_1(N)=\mathbf{F}_r/N$  
and $\Gamma_2(N)=\mathbf{F}_r/[N,N]$ 
as above.  
\begin{itemize}
\item  Assume that $\Gamma_1(N)$ is nilpotent of
volume growth of degree $D\ge 2$. Then we have
$$\Phi_{\Gamma_2(N)}(n)\simeq \exp\left(-n^{D/(D+2)}(\log n)^{2/(D+2)}\right).$$
\item  Assume that 
\begin{itemize}
\item either $\Gamma_1(N)= \mathbb Z_q\wr \mathbb Z$
with presentation $\langle a,t| a^q, [a,t^{-n}at^n], \,n\in 
\mathbb Z\rangle$,
\item or $\Gamma_1(N)=\mbox{\em BS}(1,q)$ with presentation $
\langle a,b| a^{-1}ba=b^q\rangle$.  
  \end{itemize}
Then we have
$$\Phi_{\Gamma_2(N)}(n)\simeq \exp\left(-\frac{n}{(\log n)^2} \right).$$
\end{itemize}
\end{theo}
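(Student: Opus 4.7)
The strategy is to identify $\Gamma_2(N)$, via the Magnus embedding, as a subgroup $H$ of the wreath product $W = \mathbb{Z}^r \wr \Gamma_1(N)$, and to reduce the computation of $\Phi_{\Gamma_2(N)}$ to the (known or computable) return probability of $W$. The Magnus map $x_i \mapsto (\delta_e\, e_i,\, \bar x_i) \in W$, where $\bar x_i$ is the image of $x_i$ in $\Gamma_1(N)$, extends to a homomorphism $\mathbf{F}_r \to W$ whose kernel is exactly $[N,N]$; the image $H$ is cut out in $W$ by a closed-flow (Fox-derivative) condition. Since the chosen generators of $\Gamma_2(N)$ map to generators of $H$, $\Phi_{\Gamma_2(N)}(n)$ equals the return probability at time $2n$ of the pushed-forward walk on $H \subseteq W$.

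For the upper bound on $\Phi_{\Gamma_2(N)}$ the plan is to construct explicit F\o lner sets in $H$ of the shape
\[
F_{B,L} = \bigl\{(f,g) \in H : g \in B,\ \mathrm{supp}(f) \subseteq B,\ \|f\|_\infty \leq L\bigr\},
\]
where $B \subset \Gamma_1(N)$ is a F\o lner set supplied by the hypotheses (a ball of $\Gamma_1(N)$ in the nilpotent case; the F\o lner set of $\mathbb{Z}_q \wr \mathbb{Z}$ or $\mathrm{BS}(1,q)$ from \cite{Pittet2002,Erschler2006} in the second case). The closed-flow constraint eliminates at most a fraction polynomial in $|B|$ of configurations, so $|F_{B,L}|$ is $\simeq (2L+1)^{r|B|}$ up to polynomial factors and $|\partial F_{B,L}|/|F_{B,L}|$ behaves like $|\partial B|/|B| + 1/L$. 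Inserting the F\o lner function of $\Gamma_1(N)$ and optimizing $L$ and $|B|$ via Coulhon's $\Phi$-versus-F\o lner inequality yields the upper bounds claimed in the two bullets.

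For the matching lower bound I would condition on the projection $(\bar X_k)_{k \leq 2n}$ of the random walk on $H$ to $\Gamma_1(N)$. Returning to the identity at time $2n$ requires $\bar X_{2n}=e$ and, on top of that, the accumulated $\mathbb{Z}^r$-flow to cancel at every visited site $v$. Conditioning on the projected trajectory,
\[
\Phi_{\Gamma_2(N)}(2n) \;\geq\; c\,\mathbf{E}\Bigl[\mathbf{1}_{\{\bar X_{2n}=e\}} \prod_{v \in \mathrm{range}(\bar X)} p_{k_v}(0)\Bigr],
\]
where $k_v$ is the number of visits of $\bar X$ to $v$ and each $p_{k_v}(0)$ is, by a local limit theorem for the $\mathbb{Z}^r$-valued increment walk, of polynomial order in $k_v$. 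Evaluating this expectation at the geometric scale matching the upper bound reproduces the claimed lower bound.

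The main obstacle is the lower bound in the second bullet, where $\Gamma_1(N)$ has exponential F\o lner function and the projected walk explores its range very non-uniformly; the product-of-return-probabilities estimate is then awkward to control directly. The remedy, following the approach used for Theorem \ref{th-freesol} in the $d=2$ case, is to replace the visit-count analysis by an isoperimetric comparison with the full wreath product $W$, whose return probability when $\Gamma_1(N)$ is itself lamplighter- or Baumslag--Solitar-type is $\Phi_W(n)\simeq \exp(-n/(\log n)^2)$ by the Pittet--Saloff-Coste/Erschler machinery applied to $\Gamma_1(N)$ as the base. The first bullet, by contrast, falls essentially directly under the classical theory of random walks on wreath products with polynomial-growth base.
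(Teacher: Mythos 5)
Your lower-bound strategy lands, after some detour, on the paper's actual route: the Magnus embedding realizes $\Gamma_2(N)$ as a subgroup of $\mathbb{Z}^r \wr \Gamma_1(N)$, and the Pittet--Saloff-Coste subgroup comparison (\cite[Theorem 1.3]{Pittet2000}) together with F\o lner couples on the wreath product (Propositions \ref{prob-FC}--\ref{wr-FC}--\ref{pro-comp}) yields the lower bound, exactly as in Theorem \ref{theo-fol} and its examples. So that half is fine.

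The upper bound is where the proposal breaks. You propose to build explicit F\o lner sets $F_{B,L}$ inside the image $H \cong \Gamma_2(N)$ and ``optimize via Coulhon's inequality,'' but this has the logic inverted: exhibiting F\o lner sets in a group gives an \emph{upper} bound on the F\o lner function, hence a \emph{lower} bound on the return probability $\Phi_G$. To get the upper bound $\Phi_{\Gamma_2(N)}(n)\lesssim \exp(-\cdots)$ you need an isoperimetric \emph{inequality}, i.e.\ a lower bound on the F\o lner function, showing that \emph{no} set in $\Gamma_2(N)$ with small boundary ratio can be small — and nothing in your construction addresses that. Moreover, being a subgroup of $\mathbb{Z}^r\wr\Gamma_1(N)$ gives no a priori control on $\Phi_{\Gamma_2(N)}$ from above: subgroup comparison runs exactly the opposite way. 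A secondary problem is your claim that the closed-flow constraint removes only a ``fraction polynomial in $|B|$'' of configurations; in fact the circulation conditions are roughly $|B|$ independent linear constraints on the $r|B|$ edge values, so the fraction is of order $(2L+1)^{-|B|}$, exponentially small in $|B|$ (though this alone would not change the $\simeq$-class when $r\ge 2$, it is not what you assert and must be justified).

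What the paper actually does for the upper bound is the genuinely new idea you are missing: the exclusive-pair construction (Definition \ref{def-exclu}, Lemma \ref{lem-exclu}, Propositions \ref{pro-resf}--\ref{pro-exclunil}). One finds a subgroup $\Gamma<\Gamma_2(N)$ and an element $\boldsymbol\rho\in N\setminus[N,N]$ such that the $\Gamma$-translates of $\bar a(\rho)$ are $\mathbb{Z}$-independent and do not interact with $\{\bar a(g):g\in\Gamma\}$. This produces a surjective homomorphism from $\langle\Gamma,\rho\rangle$ onto $\mathbb{Z}\wr\overline{\Gamma}$ and, via Lemma \ref{Return Expression} and Proposition \ref{Upper}, the Dirichlet-form comparison $\Phi_{\Gamma_2(N)}(kn)\le C\,\Phi_{\mathbb{Z}\wr\overline{\Gamma}}(n)$. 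For the nilpotent case $\overline{\Gamma}$ can be taken of finite index in $\Gamma_1(N)$ (Proposition \ref{pro-exclunil}); for $\mathbb{Z}_q\wr\mathbb{Z}$ and $\mathrm{BS}(1,q)$ the paper exhibits explicit exclusive pairs in Section \ref{sec-misc} (e.g.\ $\Gamma=\langle a,t^2\rangle$, $\boldsymbol\rho=[a,t^{-1}at]$ for the lamplighter presentation, and $\Gamma=\langle a^2,b\rangle$, $\boldsymbol\rho=b^{-q}a^{-1}ba$ for $\mathrm{BS}(1,q)$, with $\overline{\Gamma}\simeq\mathrm{BS}(1,q^2)$). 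Without some surrogate for this surjection-onto-a-wreath-product step, the upper bound does not follow from the data you have assembled.
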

In Section 5, Theorem \ref{th-polyc}, the result stated here for 
$\mbox{BS}(1,q)$ is extended to any polycyclic group of exponential 
volume growth equipped with a standard polycyclic presentation.

Obtaining results for $d\ge 3$ is not easy. The only example we treat 
beyond the case  $N=[\mathbf F_r,\mathbf F_r]$ 
contained in Theorem \ref{th-freesol}, i.e., $\Gamma_d(N)=\mathbf S_{d,r}$, 
is the case when $N=\gamma_c(\mathbf F_r)$. 
See Theorem \ref{th-fs+}.

\begin{rem} Fix the presentation $\mathbf F_r/N=\Gamma_1(N)$.
Let $\boldsymbol \mu$ be the probability measure driving the
lazy simple random walk $(\xi_n)_0^\infty$ on $\mathbf F_r$ so that
$$\mathbf P^{\boldsymbol \mu}_\mathbf e(\xi_n=
\mathbf g)=\boldsymbol \mu^{(n)}(\mathbf g).$$ 
Let $X=(X_n)_0^\infty$ and 
$Y=(Y_n)_0^\infty$ be the projections on $\Gamma_2(N)$ 
and $\Gamma_1(N)$, respectively so that
$$\Phi_{\Gamma_2(N)}(n)\simeq 
\mathbf P^{\boldsymbol \mu}_\mathbf e(X_n=e) \mbox{ and }
\Phi_{\Gamma_1(N)}(n)\simeq 
\mathbf P^{\boldsymbol \mu}_\mathbf e(Y_n= \bar{e})$$
where $e$ (resp. $\bar{e}$) is the identity element in 
$\Gamma_2(N)$ (resp. $\Gamma_1(N)$.) 
By the flow interpretation of the group $\Gamma_2(N)$ developed in 
\cite{DLS,Myasnikov2010} and reviewed in Section \ref{sec-flow} below,
$$\mathbf P^{\boldsymbol \mu}_\mathbf e (X_n=e)
=\mathbf P^{\boldsymbol \mu}_\mathbf e(Y\in \mathfrak B_n)$$
where $\mathfrak B_n$ is the event that, at time $n$, 
every oriented edge of the marked Cayley graph $\Gamma_1(N)$ has been traversed
an equal number of times in both  directions.   For instance,
if $\Gamma_1(N)=\mathbb Z^r$, the estimate $\Phi_{\Gamma_2(N)}(n)\simeq \exp(-n^{r/(2+r)}(\log n)^{2/(2+r)})$ also gives the order of magnitude of 
the probability that a simple random walk on $\mathbb Z^r$ returns to its starting point at time $n$ having crossed each edge an equal number of time in both direction.
\end{rem}

\subsection{Other random walk invariants} \label{sec-otherinv}
Let $|g|$ be the word-length of $G$ with respect to some fixed finite
symmetric generating set and $\rho_\alpha(g)=(1+|g|)^\alpha$. 
In \cite{Bendikov}, for any finitely generated group $G$ and real 
$\alpha\in (0,2)$, the non-increasing function 
\begin{equation*}
\widetilde{\Phi}_{G,\rho_\alpha}: \mathbb{N}\ni n\rightarrow \widetilde{\Phi}%
_{G,\rho_\alpha}(n)\in (0,\infty)
\end{equation*}
is defined  in such a way that it provides the best possible lower bound 
\begin{equation*}
\exists\,c,k\in (0,\infty),\;\forall\, n,\;\;\mu^{(2n)}(e)\ge c 
\widetilde{\Phi}_{G,\rho_\alpha}(kn),
\end{equation*}
valid for every symmetric probability measure $\mu$ on $G$ 
satisfying the weak-$\rho_\alpha$-moment
condition 
\begin{equation*}
W(\rho_\alpha,\mu)=\sup_{s>0}\{ s\mu(\{g: \rho_\alpha(g)>s\})\}<\infty.
\end{equation*}
It is well known and easy to see (using Fourier transform techniques) that 
\begin{equation*}
\widetilde{\Phi}_{\mathbb{Z}^r,\rho_\alpha}(n)\simeq n^{-r/\alpha}.
\end{equation*}
It is proved in \cite{Bendikov} that 
$\widetilde{\Phi}_{G,\rho_\alpha}(n)\simeq n^{-D/\alpha}$ if $G$ has polynomial volume growth of degree $D$ and that
$\widetilde{\Phi}_{G,\rho_\alpha}(n)\simeq \exp\left( -n^{-1/(1+\alpha)}\right)$ 
if $G$ is polycyclic of exponential volume growth. 
We prove the following result. 
\begin{theo} \label{th-freesola}
For any $\alpha\in (0,2)$,
$$\widetilde{\Phi}_{\mathbf{S}_{2,r},\rho_\alpha}(n)\simeq \exp\left(-n^{r/(r+\alpha)}(\log n)^{\alpha/(r+\alpha)}\right).$$
\end{theo}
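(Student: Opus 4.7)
The plan is to extend the $d=2$ case of Theorem \ref{th-freesol} (which corresponds to $\alpha=2$) to the full range $\alpha\in(0,2)$, systematically replacing Gaussian / second-moment inputs by their $\alpha$-stable / weak-$\rho_\alpha$-moment analogues. The driving heuristic is the flow interpretation of $\mathbf S_{2,r}$ given in the remark following Theorem \ref{th-NNN}: a walk of length $n$ on $\mathbf S_{2,r}$ returns to $e$ exactly when its projection to $\mathbb Z^r$ is a loop whose edge-flow vanishes, and an $\alpha$-stable-like walk on $\mathbb Z^r$ typically spreads to scale $n^{1/\alpha}$. Balancing the volume $\sim n^{r/\alpha}$ of the visited region against the cost of closing the flow on that region (just as in the $\alpha=2$ argument) produces the announced exponent $n^{r/(r+\alpha)}(\log n)^{\alpha/(r+\alpha)}$. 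Two matching estimates must then be established: an upper bound on $\widetilde\Phi$ via a single specific measure with $W(\rho_\alpha,\mu)<\infty$, and a lower bound uniform over \emph{all} such measures.

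For the upper bound on $\widetilde\Phi$, I would exhibit a symmetric measure $\mu_\alpha$ on $\mathbf S_{2,r}$ with $W(\rho_\alpha,\mu_\alpha)<\infty$ achieving
$$\mu_\alpha^{(2n)}(e)\le C\exp\bigl(-c\, n^{r/(r+\alpha)}(\log n)^{\alpha/(r+\alpha)}\bigr).$$
A natural candidate is $\mu_\alpha = \tfrac12\eta + \tfrac12\sum_{k\ge 1} c_k\eta_k$, where $\eta$ is the standard simple random walk measure, each $\eta_k$ is a probability measure supported on elements of word-length about $2^k$ (obtained by lifting $\alpha$-stable-like jumps on the abelianization $\mathbb Z^r$ through a fixed section combined with short commutators), and $c_k\asymp 2^{-\alpha k}$ so that $\mu_\alpha(\{g:|g|>s\})\asymp s^{-\alpha}$, hence $W(\rho_\alpha,\mu_\alpha)<\infty$. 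The return probability bound then follows from the Coulhon--Saloff-Coste correspondence between Følner-type profiles and $\mu^{(2n)}(e)$, feeding in the Følner-function estimate of $\mathbf S_{2,r}$ already established in the proof of Theorem \ref{th-freesol}.

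For the lower bound on $\widetilde\Phi$, the weighted Dirichlet-form framework of \cite{Bendikov} applies: any symmetric $\mu$ with $W(\rho_\alpha,\mu)<\infty$ has its Dirichlet form bounded below, up to constants, by a canonical long-range form $\mathcal E_{\rho_\alpha}$ built from $\rho_\alpha$ on the Cayley graph of $\mathbf S_{2,r}$. It then suffices to establish intrinsically on $\mathbf S_{2,r}$ a weighted Nash profile inequality of the form
$$\|f\|_2^{2}\,\Lambda_\alpha\!\left(\frac{\|f\|_2^2}{\|f\|_1^2}\right)\le C\,\mathcal E_{\rho_\alpha}(f,f),\qquad \Lambda_\alpha(v)\asymp (\log v)^{1+\alpha/r},$$
valid for finitely supported $f$, which by Nash's classical integration yields the matching lower bound on $\mu^{(2n)}(e)$. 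The main obstacle is precisely this last step: the isoperimetric / Nash information available on $\mathbf S_{2,r}$ is naturally attached to the \emph{local} Dirichlet form of a finitely supported measure, so one must transfer it to the \emph{long-range} weighted form $\mathcal E_{\rho_\alpha}$. This should be carried out via a Bochner-type subordination of $\mathcal E_{\rho_\alpha}$ against the heat semigroup of the simple walk, combined with the Følner-profile estimate on $\mathbf S_{2,r}$ already available from the $\alpha=2$ case.
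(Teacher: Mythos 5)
Your overall template (upper bound by exhibiting one measure with $W(\rho_\alpha,\mu)<\infty$, lower bound uniformly over all such measures, with \cite{Bendikov} in the background) is right, but the mechanics in both halves are off, and in one place reversed.

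\textbf{The ``lower bound'' argument is in the wrong direction.} A Nash profile inequality of the form $\|f\|_2^2\,\Lambda\bigl(\|f\|_2^2/\|f\|_1^2\bigr)\le C\,\mathcal E(f,f)$ yields, through Nash's integration scheme, an \emph{upper} bound on $\mu^{(2n)}(e)$, not a lower bound. Lower bounds on return probabilities come from test functions/F\o lner couples (exhibiting sets with small boundary, hence small Dirichlet eigenvalues), not from Nash inequalities. The paper does not attempt anything like what you propose: it simply cites the general transfer result \cite[Theorem 3.3]{Bendikov}, which converts the bound $\Phi_{\mathbf S_{2,r}}(n)\succeq \exp(-n^{r/(r+2)}(\log n)^{2/(r+2)})$ obtained for finitely supported walks (Theorem \ref{th-freesol}, $d=2$) into the matching $\rho_\alpha$ lower bound for all walks with finite weak $\rho_\alpha$-moment. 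This is the \emph{easy} direction; your ``main obstacle'' paragraph fights a problem that does not arise, and the inequality $\mathcal E_\mu\ge c\,\mathcal E_{\rho_\alpha}$ you invoke would in any case produce an upper, not lower, bound for $\mu^{(2n)}(e)$.

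\textbf{The upper bound is the genuinely hard part, and your plan is missing the key mechanism.} You propose a measure built by lifting $\alpha$-stable jumps on the abelianization through a section and then appealing to a ``Coulhon--Saloff-Coste correspondence between F\o lner-type profiles and $\mu^{(2n)}(e)$.'' But the F\o lner-couple machinery of \cite{CGP} gives \emph{lower} bounds; extracting a return-probability \emph{upper} bound for a long-range measure from a F\o lner profile is precisely the kind of transfer you flagged as an obstacle in the other half, and the paper does not go that route. Instead, the paper takes $\mu_a$ to be the pushforward of a measure on $\mathbf F_r$ supported on \emph{powers of the generators} $\mathbf s_i^m$ with tails $c(1+|m|)^{-1-\alpha}$ (the formulas (\ref{pushforward}), (\ref{defmu})); it verifies $W(\rho_\alpha,\mu_a)<\infty$; it uses the exclusive pair $(\Gamma=\langle s_1^2,\dots,s_r^2\rangle,\boldsymbol\rho=[\mathbf s_1,\mathbf s_2])$ of Example \ref{meta-exclu} together with Proposition \ref{Upper} to dominate $\mu_a^{*n}(e_*)$ by $(\eta\star\bar\varphi\star\eta)^{\star n}(e_\star)$ on $\mathbb Z\wr(2\mathbb Z)^r$; and it then cites the Donsker--Varadhan-type estimate of \cite{SCZ-dv1} for switch-walk-switch walks with operator-stable base jumps to get the sharp constant $\exp(-cn^{r/(r+\alpha)}(\log n)^{\alpha/(r+\alpha)})$. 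None of these steps (support on powers of generators so that Proposition \ref{Lower}/\ref{Upper} apply, the exclusive-pair reduction to a wreath product, the large-deviation input for stable-like base walks) appear in your proposal, and without them your upper bound does not close.
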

The lower bound in this theorem  follows from 
Theorem \ref{th-freesol} and \cite{Bendikov}. Indeed, for $d>2$,
Theorem \ref{th-freesol} and \cite[Theorem 3.3]{Bendikov} also give
$$\widetilde{\Phi}_{\mathbf{S}_{d,r},\rho_\alpha}(n)
\geq c \exp\left(- Cn
\left(\frac{\log_{[d-1]}n}{\log_{[d-2]}n}\right)^{\alpha/r}\right).
$$
The upper bound in Theorem \ref{th-freesola} is obtained by 
studying random walks driven by measures that are not finitely supported.
The fact that the techniques we develop below can be applied successfully in certain
cases of this type is worth noting. 
Proving an upper bound matching the lower bound given 
above for $\widetilde{\Phi}_{\mathbf{S}_{d,r},\rho_\alpha}$ with $d>2$ is an 
open problem. 

\subsection{Wreath products and Magnus embedding} \label{sub-wr}
Let $H,K$ be countable groups. Recall that the wreath product $K\wr H$ (with base $H$) 
is the semidirect product of the 
algebraic direct sum $K_H=\sum_{h\in H}K_h$ of $H$-indexed copies of $K$ 
by $H$ where $H$ acts on $K_H$ by translation of the indices. More precisely,
elements of $K\wr H$ are pair $(f,h)\in K_H\times H$ and
$$(f,h)(f',h')=(f\tau_h f',hh')$$ where $\tau_hf_x=f_{h^{-1}x}$ if 
$f=(f_x)_{x\in H}\in K_H$ (recall that, by definition,  only finitely many $f_x$ 
are not the identity element $e_K$  in $K$).  In the context of random walk 
theory, the group $H$ is called the base-group and the group $K$ the lamp-group 
of $K\wr H$ (an element $(f,h) \in K\wr H$ can understood as a finite 
lamp configuration $f$ over $H$ together with the position $h$ of the 
``lamplighter'' on the base $H$). Given  probability measures $\eta$ on 
$K$ and $\mu$ on $H$, the switch-walk-switch random walk on $K\wr H$ is driven by the measure $\eta*\mu*\eta$ and has the following interpretation. At each step,
the lamplighter switches the lamp at its current position using an $\eta$-move 
in $K$, then the lamplighter makes a $\mu$-move in $H$ according to $\mu$ and, 
finally, the lamplighter switches the lamp at its final position using 
an $\eta$-move in $K$. Each of these steps are performed independently 
of each others. See, e.g., \cite{Pittet2002,SCZ-dv1} for more details. When we write $\eta*\mu*\eta$ in 
$K\wr H$, we identify $\eta$ with the probability measure on $K\wr H$ with is equal to $\eta$ on the copy of $K$ above the identity of $H$ and vanishes everywhere else, and we identify $\mu$ with the 
a probability measure on $K\wr H$ supported on the obvious copy of $H$ in $K\wr H$.

Thanks to \cite{CGP,Erschler2006,Pittet2002,SCZ-dv1}, quite a lot is known about 
the random walk invariant $\Phi_{K\wr H}$.  Further, the results stated in Theorems 
\ref{th-freesol}-\ref{th-NNN} can in fact be 
rephrased as stating that 
$$\Phi_{\Gamma_2(N)}\simeq \Phi_{\mathbb Z^a\wr \Gamma_1(N)}$$
for some/any integer $a\ge 1$. It is relevant to note here that for 
$\Gamma$ of polynomial volume growth of degree $D>0$ or $\Gamma$ infinite 
polycyclic (and in many other cases as well),  whe have 
$\Phi_{\mathbb Z^a\wr \Gamma} \simeq \Phi_{\mathbb Z^b\wr \Gamma}$ 
for any integers $a,b\ge 1$.  Indeed, the proofs of Theorems 
\ref{th-freesol}--\ref{th-NNN}--\ref{th-freesola} make use of the Magnus
embedding which provides  us with an injective homomorphism 
$\bar{\psi}: \Gamma_2(N) \hookrightarrow \mathbb Z^r\wr \Gamma_1(N)$.
This embedding is use to prove a lower bound of the type
$$\Phi_{\Gamma_2(N)}(n)\ge c\Phi_{\mathbb Z^r\wr \Gamma_1(N)}(kn)$$
and an upper bound that can be stated as
$$ \Phi_{\Gamma_2(N)}(Cn)\le C\Phi_{\mathbb Z \wr \overline{\Gamma}}(n)$$
where $\overline{\Gamma}< \Gamma_1(N)$ is a subgroup which has 
a similar structure as $\Gamma_1(N)$. For instance, in the easiest  cases including when $\Gamma_1(N)$ is nilpotent, $\overline{\Gamma}$ is a finite index subgroup of 
$\Gamma_1(N)$.
The fact that the wreath product is taken with $\mathbb Z^r$ in the lower 
bound and with $\mathbb Z$ in the upper bound is not a typo. 
It reflects the nature of the arguments used for the proof. Hence, the fact 
that the lower and upper bounds that are produced by our arguments 
match up depends on the property that, under proper hypotheses on 
$\overline{\Gamma}< \Gamma_1(N)$ and $\Gamma_1(N)$,
$$\Phi_{\mathbb Z^a\wr \Gamma_1(N)} \simeq \Phi_{\mathbb Z^b\wr \overline{\Gamma}}$$ 
for any pair of positive integers $a,b$.

\subsection{A short guide}
Section 2 of the paper is devoted to the algebraic structure of the group 
$\Gamma_2(N)=\mathbf{F}_r/[N,N]$.  It describes the Magnus embedding 
as well as the  interpretation of $\Gamma_2(N)$ 
in terms of flows on $\Gamma_1(N)$. See 
\cite{DLS,Myasnikov2010,Vershik2000}.  
The Magnus embedding and the flow representation play  
key parts in the proofs of our main results.

Section 3 describes two methods to obtain lower bounds on the probability of 
return of certain random walks on $\Gamma_2(N)$.  The first method is based 
on a simple comparison argument and the notion of F\o lner couples 
introduced in \cite{CGP} and already used in \cite{Erschler2006}. This method 
works for symmetric random walks driven by a finitely supported measure.
The second method allows us to treat some measures that are not finitely 
supported, something that is of interest in the spirit of Theorem 
\ref{th-freesola}.

Section 4 focuses on upper bounds for the probability of return. 
This section also makes use of the Magnus embedding, but in a somewhat 
more subtle way. We introduce the notion of exclusive pair. These pairs are made of a subgroup 
$\Gamma$ of $\Gamma_2(N)$ and an element $\boldsymbol \rho$ in the free group $\mathbf F_r$ 
that projects to a cycle on $\Gamma_1(N)$ with the property that the traces of $\Gamma$ and 
$\boldsymbol \rho$ on $\Gamma_1(N)$ have, in a sense, minimal interaction. See Definition 
\ref{def-exclu}. Every upper bound we obtain is proved using this notion.

Section 5 presents a variety of applications of the results obtained in Sections
3 and 4. In particular, the statement regarding $\Phi_{\mathbf{S}_{2,r}}$
as well as Theorems \ref{th-NNN}--\ref{th-freesola} and assorted results
are proved in Section 5.

Section 6 is devoted to the result concerning $\mathbf{S}_{d,r}$, $d\ge 3$. 
Both the lower bound and the upper bound methods are re-examined to allow  
iteration of the procedure.

Throughout this work, we will have to distinguish 
between convolutions in different groups. We will use $*$ 
to denote either convolution on a generic group $G$ 
(when no confusion can possibly arise) or, more specifically, 
convolution on $\Gamma_2(N)$. When $*$ is used to denote convolution 
on $\Gamma_2(N)$, we use $e_*$ to denote the identity element in $\Gamma_2(N)$.
We will use $\star$ to denote convolution on various wreath products such as
$\mathbb Z^r\wr \Gamma_1(N)$. When this notation is used, $e_\star$ will 
denote the identity element in the corresponding group. When necessary, 
we will decorate $\star$ with a subscript to distinguish between 
different wreath poducts. So, if $\mu$ is a probability measure on $\Gamma_2(N)$ and $\phi$ a probability measure on $\mathbb Z^r\wr \Gamma_1(N)$, we will write
$\mu^{*n}(e_*)=\phi^{\star n}(e_\star)$ to indicate that the $n$-fold convolution of $\mu$ on $\Gamma_2(N)$ evaluated at the identity element of $\Gamma_2(N)$ is 
equal to the $n$-fold convolution of $\phi$ on $\mathbb Z\wr \Gamma_1(N)$ evaluated at the identity element of $\mathbb Z\wr \Gamma_1(N)$.

\section{$\Gamma_2(N)$ and the Magnus embedding} \setcounter{equation}{0}

This work is concerned with random walks on the groups 
$\Gamma_\ell(N)=\mathbf F_r/N^{(\ell)}$ where $\mathbf F_r$ is the free group
on $r$ generators and $N$ is a normal subgroup of $\mathbf F_r$. 
In fact, it is best to think of $\Gamma_\ell(N)$ as a marked group, 
that is, a group equipped with a generating tuple. 
In the case of $\Gamma_\ell(N)$, the generating $r$-tuple is always 
provided by the images of the free generators of $\mathbf F_r$.  Ideally,
one would like to obtain results based on hypotheses on the nature of 
$\Gamma_1(N)$ viewed as an unmarked group. 
However, as pointed out in Remark \ref{needN} below,
the unmarked group $\Gamma_1(N)$ is not enough to determine either 
$\Gamma_2(N)$ or the random walk invariant $\Phi_{\Gamma_2(N)}$.
That is, in general, one needs information about the pair $(\mathbf F_r,N)$ 
itself to obtain precise information about $\Phi_{\Gamma_2(N)}$. Note however 
that when $\Gamma_1(N)$ is nilpotent with volume growth of degree at least $2$,
Theorem \ref{th-NNN} provides a result that does not require further 
information on $N$.

\subsection{The Magnus embedding} \label{sec-Magnus}
In 1939, Magnus \cite{Magnus} introduced an embedding of  $
\Gamma_2(N)=\mathbf{F}_{r}/[N,N]$ into a matrix group with coefficients 
in a module over  $\mathbb Z(\Gamma_1(N))=\mathbb{Z}(%
\mathbf{F}_{r}/N)$. In particular, the Magnus embedding is used to embed
free solvable groups into certain wreath products. 

Let $\mathbf F_r$ be the free group on the generators 
$\mathbf s_i$, $1\le i\le r$. 
Let $N$ be a normal subgroup of $%
\mathbf{F}_r$ and let $\pi=\pi_N$ and $\pi_2=\pi_{2,N}$ 
be the canonical projections
$$\pi: \mathbf F_r\ra \mathbf F_r/N=\Gamma_1(N),\;\;\pi_2:\mathbf 
F_r\ra \mathbf F_r/[N,N]=\Gamma_2(N).$$ 
We also let
$$\bar{\pi}: \Gamma_2(N)\ra \Gamma_1(N)$$
the projection from $\Gamma_2(N)$ onto $\Gamma_1(N)$, 
whose kernel can be identified with $N/[N,N]$, has the property
that $\pi= \bar{\pi}\circ \pi_2$.
Set
$$s_i=\pi_2(\mathbf s_i),\;\;
\bar{s}_i=\pi(\mathbf s_i)=\bar{\pi}(s_i).$$ 
When it is necessary to distinguish between the identity element in $e\in \Gamma_2(N)$ and 
the identity element in $\Gamma_1(N)$, we write $\bar{e}$ for the latter.

Let $\mathbb{Z}(\mathbf{F}_r)$ be the integral group
ring of the free group $\mathbf{F}_r$. By extension and with some abuse 
of notation, let $\pi $ denote also the ring homomorphism 
\[
\pi : 
\mathbb{Z}
(\mathbf{F_r)}\rightarrow 
\mathbb{Z}
(\mathbf{F}_r/N)
\]%
determined by $\pi (\mathbf{s}_{i})= \bar{s}_{i}$, $1\le i\le r$.

Let $\Omega $ be the free left $
\mathbb{Z}
(\mathbf{F}_r/N)$-module of rank $r$ with basis $(\lambda _{\mathbf{s}%
_{i}})_1^r$ and set
\[
M=\left[ 
\begin{array}{cc}
\mathbf{F}_r/N & \Omega \\ 
0 & 1%
\end{array}%
\right] 
\]%
which is a subgroup of the group of the $2\times 2$ upper-triangular matrices 
over $\Omega .$ The
map 
\begin{equation}\label{defpsi}
\psi (\mathbf{s}_{i})=\left[ 
\begin{array}{cc}
\pi (\mathbf{s}_{i}) & \lambda _{\mathbf{s}_{i}} \\ 
0 & 1%
\end{array}%
\right] 
\end{equation}
extends to a homomorphism $\psi $ of $\mathbf{F}_r$ into $M$. We denote by $%
a(\mathbf u),$ $\mathbf u\in \mathbf{F}_r$, 
the $(1,2)$-entry of the matrix $\psi (\mathbf u),$ that is%
\begin{equation}\label{defa}
\psi (\mathbf u)=\left[ 
\begin{array}{cc}
\pi (\mathbf u) & a(\mathbf u) \\ 
0 & 1%
\end{array}%
\right] . 
\end{equation}

\begin{theo}[Magnus \protect\cite{Magnus}]
The kernel of the homomorphism $\psi :\mathbf{F}_r\rightarrow M$ defined as
above is 
\[
\ker (\psi )=[N,N]. 
\]%
Therefore $\psi $ induces a monomorphism%
\[
\bar{\psi} :\mathbf{F}_r/[N,N]\hookrightarrow M. 
\]
It follows that $\mathbf{F}_r/[N,N]$\bigskip\ is isomorphic to the subgroup
of $M$ generated by 
\[
\left[ 
\begin{array}{cc}
\pi (\mathbf{s}_{i}) & \text{ }\lambda _{\mathbf{s}_{i}} \\ 
0 & \text{ }1%
\end{array}%
\right], \;i=1,\dots,r . 
\]
\end{theo}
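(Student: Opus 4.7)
The plan is to prove $\ker\psi = [N,N]$ by two inclusions, the harder one being $\ker\psi \subseteq [N,N]$. The easy inclusion $[N,N] \subseteq \ker\psi$ follows from the observation that if $\mathbf{u} \in N$, then $\pi(\mathbf{u}) = \bar{e}$, so $\psi(\mathbf{u})$ lies in the unipotent subgroup
\[
U = \left\{\begin{pmatrix} 1 & \omega \\ 0 & 1\end{pmatrix} : \omega \in \Omega\right\} \cong (\Omega,+)
\]
of $M$. Since $U$ is abelian, the restriction $a|_N : N \to \Omega$, $\mathbf{u} \mapsto a(\mathbf{u})$, is a homomorphism into an abelian group and hence kills $[N,N]$.

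For the reverse inclusion, $\psi(\mathbf{u}) = I$ forces $\pi(\mathbf{u}) = \bar{e}$, hence $\mathbf{u} \in N$, so it suffices to prove that the induced map $\bar{a} : N/[N,N] \to \Omega$ is injective. The natural tool is the Fox free differential calculus: introduce derivations $\partial_i : \mathbb{Z}\mathbf{F}_r \to \mathbb{Z}\mathbf{F}_r$ characterized by $\partial_i(\mathbf{s}_j) = \delta_{ij}$ and the Leibniz rule $\partial_i(\mathbf{uv}) = \partial_i(\mathbf{u}) + \mathbf{u}\,\partial_i(\mathbf{v})$. A short induction using $\psi(\mathbf{uv}) = \psi(\mathbf{u})\psi(\mathbf{v})$ and the resulting cocycle identity $a(\mathbf{uv}) = a(\mathbf{u}) + \pi(\mathbf{u})\,a(\mathbf{v})$ yields the explicit formula
\[
a(\mathbf{u}) = \sum_{i=1}^r \pi(\partial_i \mathbf{u})\,\lambda_{\mathbf{s}_i},
\]
so $\bar{a}(\mathbf{u}) = 0$ translates into the simultaneous vanishing of all $\pi(\partial_i \mathbf{u}) \in \mathbb{Z}\Gamma_1(N)$.

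The main obstacle is proving the injectivity of $\bar{a}$. My approach is to invoke the Nielsen--Schreier theorem: choose a Schreier transversal $T$ for $N$ in $\mathbf{F}_r$ and take the associated free basis $\{\mathbf{r}_\alpha\}$ of $N$. A direct Fox-calculus computation then shows that the vectors $(\pi(\partial_i \mathbf{r}_\alpha))_i \in \Omega$ form a $\mathbb{Z}\Gamma_1(N)$-linearly independent family; consequently any $\mathbf{u} \in N$ all of whose Fox derivatives vanish modulo $N$ must be trivial in $N^{\mathrm{ab}} = N/[N,N]$. Equivalently, this amounts to exactness of the Crowell sequence
\[
0 \to N/[N,N] \xrightarrow{\bar{a}} \Omega \xrightarrow{\epsilon} \mathbb{Z}\Gamma_1(N), \qquad \epsilon(\lambda_{\mathbf{s}_i}) = \bar{s}_i - 1,
\]
the surjectivity and image-description parts of which are straightforward from the fundamental identity $\sum_i \partial_i(\mathbf{u})(\mathbf{s}_i - 1) = \mathbf{u} - 1$. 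Once injectivity of $\bar{a}$ is secured, both conclusions of the theorem follow: $\ker\psi = [N,N]$, and the description of $\bar{\psi}(\mathbf{F}_r/[N,N])$ as the subgroup of $M$ generated by the matrices $\psi(\mathbf{s}_i)$ is immediate from the definition of $\psi$.
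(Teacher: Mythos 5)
The paper gives no proof of this theorem — it is cited to Magnus's 1939 article — so your proposal must be judged against the classical argument it gestures at. Your overall reduction is sound and standard: the cocycle identity makes $\bar a\vert_N$ a homomorphism into the abelian group $\Omega$, so $[N,N]\subseteq\ker\psi$; conversely $\psi(\mathbf u)=I$ forces $\mathbf u\in N$, and the whole matter becomes the injectivity of $\bar a\colon N/[N,N]\to\Omega$, i.e.\ exactness of the relation (Crowell/Gruenberg) sequence at $N^{\mathrm{ab}}$. That is indeed the right framing and the right tool.

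However, the single step you highlight as the crux contains a false assertion. You claim that the Schreier free basis $\{\mathbf r_\alpha\}$ of $N$ gives a $\mathbb Z\Gamma_1(N)$-linearly independent family $\{(\pi(\partial_i\mathbf r_\alpha))_i\}$ in $\Omega$. This fails already in the smallest nontrivial cases. Take $\Gamma_1(N)=\mathbb Z/2$ via $\mathbf s_1\mapsto 1$, $\mathbf s_2\mapsto 0$ (so $r=2$); the Schreier basis of $N$ contains $\mathbf r=\mathbf s_1^2$, and $\pi(\partial_1\mathbf r)=1+\bar s_1$, which is annihilated in $\mathbb Z[\mathbb Z/2]$ by $1-\bar s_1$. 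So the family is not $\mathbb Z\Gamma_1(N)$-free; if it were, $N^{\mathrm{ab}}$ would be a free $\mathbb Z\Gamma_1(N)$-module of the Schreier rank, which is simply not the case in general (it is a submodule of the rank-$r$ free module $\Omega$). What your argument actually needs, and what is true, is only $\mathbb Z$-linear independence of the $\bar a(\mathbf r_\alpha)$: since $\bar a\vert_N$ is additive and $N$ is free on the $\mathbf r_\alpha$, one has $\bar a(\mathbf u)=\sum_\alpha n_\alpha\bar a(\mathbf r_\alpha)$ with $n_\alpha\in\mathbb Z$ the exponent sums, and $\mathbb Z$-independence is exactly the injectivity of $N^{\mathrm{ab}}\to\Omega$.

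That $\mathbb Z$-independence, equivalently the left-exactness of the Crowell sequence, is the genuine content of Magnus's theorem and is precisely the step you do not carry out; "a direct Fox-calculus computation then shows" is a placeholder for the argument, not the argument. The proof does require a careful bookkeeping with the Schreier transversal (tracking which prefixes of $t\in T$ produce the coefficient of $\pi(t)$ in $\pi(\partial_i\mathbf r_{t,i})$) or, alternatively, an appeal to the $5$-term/relation-module exact sequence for the extension $1\to N\to\mathbf F_r\to\Gamma_1(N)\to 1$. Either way, this is where the theorem lives, and as written the proposal both asserts a false form of the key lemma and leaves the true form unproved.
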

\begin{rem} For $g\in \mathbf F_r/[N,N]$, we write \begin{equation}\label{defbara}
\bar{\psi} (g)=\left[ 
\begin{array}{cc}
\bar{\pi} (g) & \bar{a}(g) \\ 
0 & 1%
\end{array}%
\right] 
\end{equation}
where $\bar{a}(\pi_2(\mathbf u))=a(\mathbf u)$, $\mathbf u\in \mathbf F_r$.
\end{rem}

\begin{rem}
The free left $
\mathbb{Z}
(\mathbf{F}_r/N)$-module $\Omega $ with basis $\{\lambda _{\mathbf{s}%
_{i}}\}_{1\leq i\leq d}$ is isomorphic to the direct sum $\sum_{x\in 
\mathbf{F}_r/N}(\mathbb{Z}
^{r})_{x}.$ More precisely, if we regard the elements in 
$\sum_{x\in \mathbf{F}_r/N}(
\mathbb{Z}
^{r})_{x}$ as functions $f=(f_{1},..,f_{r}):\mathbf{F}_r/N\rightarrow 
\mathbb{Z}
^{r}$ with finite support, the map%
\begin{eqnarray*}
\sum_{x\in \mathbf{F}_r/N}(
\mathbb{Z}
^{r})_{x} &\rightarrow &\Omega : \\
f &\mapsto &\left( \sum_{x\in \mathbf{F}_r/N}f_{1}(x)x\right) \lambda _{%
\mathbf{s}_{1}}+...+\left( \sum_{x\in \mathbf{F}/N}f_{r}(x)x\right) \lambda
_{\mathbf{s}_{r}}
\end{eqnarray*}%
is a left $\mathbb Z(\mathbf{F}_r/N)$-module isomorphism. 
We will identity $\Omega $ with 
$\sum_{x\in \mathbf{F}_r/N}(
\mathbb{Z}
^{r})_{x}.$
Using the above interpretation, one can restate the Magnus embedding 
theorem as an
injection from $\mathbf{F}_r/[N,N]$ into the wreath product $
\mathbb{Z}
^{r}\wr (\mathbf{F}_r/N).$
\end{rem}

The entry $a(g)\in \Omega $ under the Magnus embedding is given by 
Fox derivatives which we briefly review. Let $G$ be a group and $
\mathbb{Z}
(G)$ be its integral group ring. Let  $M$  be a left $\mathbb Z(G)$-module. An additive
map $d:
\mathbb{Z}
(G)\rightarrow M$ is called a \textit{left derivation} if for all $x,y\in G,$%
\[
d(xy)=xd(y)+d(x). 
\]
As a consequence of the definition,  we have $d(e) =0$ and 
$d(g^{-1}) =-g^{-1}d(g)$.

For the following two theorems of Fox, we refer the reader to the discussion in 
\cite[Sect. 2.3]{Myasnikov2010} and the references given there.
\begin{theo}[Fox]
Let $\mathbf{F}_r$ be the free group on $r$ generators $\mathbf{s}_{i}$, $%
1\leq i\leq r$. For each $i$, there is a unique left derivation 
\[
\partial _{\mathbf{s}_{i}}:
\mathbb{Z}
(\mathbf{F}_r)\rightarrow 
\mathbb{Z}
(\mathbf{F}_r) 
\]
satisfying
\[
\partial _{\mathbf{s}_{i}}(\mathbf{s}_{j})=\left\{ 
\begin{array}{ccc}
1 & \text{ \ if \ } & i=j \\ 
0 & \text{ \ if \ } & i\neq j .%
\end{array}%
\right. 
\]
Further, if $N$ is a normal subgroup of $\mathbf F_r$, then
$\pi(\partial_{\mathbf s_i}\mathbf u)= 0$ in $\mathbb Z(\mathbf F_r/N)$ 
for all $1\le i\le r$ if and only if
$\mathbf u\in [N,N]$.
\end{theo}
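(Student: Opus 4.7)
My plan is to prove Fox's theorem by leveraging the Magnus-style matrix construction already introduced in this section, but applied \emph{before} quotienting by $N$. Let $\Lambda$ be the free left $\mathbb Z(\mathbf F_r)$-module with basis $\lambda_1,\dots,\lambda_r$, and consider the group $\mathcal M$ of matrices $\left(\begin{smallmatrix} \mathbf g & \omega \\ 0 & 1\end{smallmatrix}\right)$ with $\mathbf g\in\mathbf F_r$ and $\omega\in\Lambda$. Each matrix $\left(\begin{smallmatrix} \mathbf s_i & \lambda_i \\ 0 & 1\end{smallmatrix}\right)$ is invertible in $\mathcal M$, so by the universal property of $\mathbf F_r$ the assignment $\mathbf s_i\mapsto \left(\begin{smallmatrix} \mathbf s_i & \lambda_i \\ 0 & 1\end{smallmatrix}\right)$ extends uniquely to a homomorphism $\Psi:\mathbf F_r\to\mathcal M$. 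Writing $\Psi(\mathbf u)=\left(\begin{smallmatrix} \mathbf u & \sum_i d_i(\mathbf u)\lambda_i \\ 0 & 1\end{smallmatrix}\right)$, the identity $\Psi(\mathbf u\mathbf v)=\Psi(\mathbf u)\Psi(\mathbf v)$ forces $d_i(\mathbf u\mathbf v)=\mathbf u\,d_i(\mathbf v)+d_i(\mathbf u)$. Extending each $d_i$ by $\mathbb Z$-linearity therefore produces a left derivation on $\mathbb Z(\mathbf F_r)$ satisfying $d_i(\mathbf s_j)=\delta_{ij}$, and I set $\partial_{\mathbf s_i}:=d_i$. Uniqueness is immediate since any left derivation is determined on generators by hypothesis, on inverses by the forced identity $d(\mathbf g^{-1})=-\mathbf g^{-1}d(\mathbf g)$, on products by the derivation rule, and on all of $\mathbb Z(\mathbf F_r)$ by $\mathbb Z$-linearity.

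For the characterization of $[N,N]$, the key link is that the map reducing coefficients via $\pi$ and sending $\lambda_i\mapsto\lambda_{\mathbf s_i}$ sends $\mathcal M$ onto the Magnus group $M$, identifying $\Psi$ with the $\psi$ of \eqref{defpsi}. Comparing $(1,2)$-entries gives
$$a(\mathbf u)\;=\;\sum_{i=1}^{r}\pi(\partial_{\mathbf s_i}\mathbf u)\,\lambda_{\mathbf s_i}\qquad\text{in }\Omega,$$
and since $\Omega$ is \emph{free} on $\lambda_{\mathbf s_1},\dots,\lambda_{\mathbf s_r}$, the vanishing of $a(\mathbf u)$ is equivalent to $\pi(\partial_{\mathbf s_i}\mathbf u)=0$ for every $i$. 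By the Magnus theorem recalled above, $\mathbf u\in[N,N]$ iff $\psi(\mathbf u)$ is the identity of $M$, i.e.\ iff $\pi(\mathbf u)=\bar e$ and $a(\mathbf u)=0$. The direction $(\Rightarrow)$ is therefore immediate; for $(\Leftarrow)$ I still need to deduce $\pi(\mathbf u)=\bar e$ from the vanishing of the Fox derivatives. For this I would invoke the \emph{Fox fundamental formula}
$$\mathbf u - 1\;=\;\sum_{i=1}^{r}(\partial_{\mathbf s_i}\mathbf u)(\mathbf s_i-1)\qquad\text{in }\mathbb Z(\mathbf F_r),$$
proved by induction on word length (check on $\mathbf s_j^{\pm 1}$ and propagate using the identity $\mathbf u\mathbf v-1=\mathbf u(\mathbf v-1)+(\mathbf u-1)$ together with the derivation rule). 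Applying $\pi$ yields $\pi(\mathbf u)-1=\sum_i\pi(\partial_{\mathbf s_i}\mathbf u)(\bar s_i-1)$, so the hypothesis forces $\pi(\mathbf u)=1$ in $\mathbb Z(\Gamma_1(N))$, hence $\mathbf u\in N$, and Magnus then gives $\mathbf u\in[N,N]$.

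The main obstacle is really bookkeeping rather than any deep idea: one must carefully verify that both auxiliary identities --- the Magnus entry formula and the fundamental formula --- propagate correctly through the product rule and through inverses. Both inductions rest on the same trick and ultimately on the associativity of matrix multiplication over the noncommutative ring $\mathbb Z(\mathbf F_r)$. Once these two identities are in hand, Fox's theorem follows cleanly from the Magnus embedding already established.
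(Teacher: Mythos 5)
Your proof is correct, and the paper offers no argument of its own to compare against: it states this theorem (and the next one) with a pointer to Myasnikov--Roman'kov--Ushakov--Vershik, Sect.\ 2.3. Your route is the standard one and is complete in all essential steps: the matrices $\left(\begin{smallmatrix}\mathbf s_i & \lambda_i\\ 0 & 1\end{smallmatrix}\right)$ are units in the ring of $2\times 2$ upper-triangular matrices over $\mathbb Z(\mathbf F_r)\oplus\Lambda$, so $\Psi$ exists by freeness; matrix multiplication over this noncommutative ring gives $d_i(\mathbf u\mathbf v)=\mathbf u\,d_i(\mathbf v)+d_i(\mathbf u)$, which together with additivity is exactly the left-derivation axiom; and uniqueness follows because any additive left derivation is pinned down on $\mathbf s_j$ by hypothesis, on $\mathbf s_j^{-1}$ by $d(g^{-1})=-g^{-1}d(g)$, and on everything else by the product rule. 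The coefficient-reduction map $\mathcal M\to M$ is a homomorphism because $\pi$ is a ring homomorphism, which gives $a(\mathbf u)=\sum_i\pi(\partial_{\mathbf s_i}\mathbf u)\lambda_{\mathbf s_i}$ (incidentally, you have also proved the paper's second ``Fox'' theorem here), and freeness of $\Omega$ converts $a(\mathbf u)=0$ into the simultaneous vanishing of all $\pi(\partial_{\mathbf s_i}\mathbf u)$. The one genuinely nontrivial step in the converse direction is deducing $\pi(\mathbf u)=\bar e$, and your appeal to the fundamental formula $\mathbf u-1=\sum_i(\partial_{\mathbf s_i}\mathbf u)(\mathbf s_i-1)$ is exactly the right device; the induction you sketch (base case $\mathbf s_j^{\pm1}$, propagation via $\mathbf u\mathbf v-1=\mathbf u(\mathbf v-1)+(\mathbf u-1)$ and the derivation rule) is sound, and applying $\pi$ then reads off $\pi(\mathbf u)=1$ in $\mathbb Z(\mathbf F_r/N)$, hence $\pi(\mathbf u)=\bar e$ since the group elements are a $\mathbb Z$-basis. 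You do, of course, rely on the Magnus kernel theorem $\ker\psi=[N,N]$ as external input; that is legitimate here since the paper has already stated it independently.
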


\begin{exa}
For $\mathbf{g}=\mathbf{s}_{i_{1}}^{\varepsilon _{1}}...\mathbf{s}%
_{i_{n}}^{\varepsilon _{n}}$, $\varepsilon _{j}\in \{\pm 1\},$%
\begin{eqnarray*}
\partial _{\mathbf{s}_{i}}(\mathbf{g}) &=&\sum_{j=1}^{n}\mathbf{s}%
_{i_{1}}^{\varepsilon _{1}}...\mathbf{s}_{i_{j-1}}^{\varepsilon
_{j-1}}\partial _{\mathbf{s}_{i}}(\mathbf{s}_{i_{j}}^{\varepsilon _{j}}) \\
&=&\sum_{j:i_{j}=i,\varepsilon _{j}=1}\mathbf{s}_{i_{1}}^{\varepsilon
_{1}}...\mathbf{s}_{i_{j-1}}^{\varepsilon
_{j-1}}-\sum_{j:i_{j}=i,\varepsilon _{j}=-1}\mathbf{s}_{i_{1}}^{\varepsilon
_{1}}...\mathbf{s}_{i_{j-1}}^{\varepsilon _{j-1}}\mathbf{s}%
_{i_{j}}^{\varepsilon _{j}}.
\end{eqnarray*}
\end{exa}

\begin{theo}[Fox]
The Magnus embedding 
\[
\bar{\psi} :\mathbf{F}_r/[N,N]\hookrightarrow M 
\]%
is given by%
\begin{equation}
\bar{\psi} (g)=\left[ 
\begin{array}{cc}
\bar{\pi} (g) & \text{ }\sum_{i=1}^{r}\pi (\partial _{\mathbf{s}_{i}}\mathbf{g}%
)\lambda _{\mathbf{s}_{i}} \\ 
0 & \text{ }1%
\end{array}%
\right]   \label{Magnus}
\end{equation}
where $\mathbf g\in \mathbf F_r$ is any element such that $\pi_2(\mathbf g)=g$.
\end{theo}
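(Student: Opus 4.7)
The plan is to prove the claimed formula for $\psi : \mathbf F_r \to M$ rather than directly for $\bar\psi$; once we establish that
\[
\psi(\mathbf g) = \begin{pmatrix} \pi(\mathbf g) & \sum_{i=1}^r \pi(\partial_{\mathbf s_i}\mathbf g)\,\lambda_{\mathbf s_i} \\ 0 & 1 \end{pmatrix}
\]
for every $\mathbf g \in \mathbf F_r$, the statement for $\bar\psi$ follows by the first Fox theorem: since $\ker\psi = [N,N]$, the right-hand side depends only on $g = \pi_2(\mathbf g)$, and the $(1,1)$-entry descends to $\bar\pi(g)$.

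The main step is to recognize $a:\mathbf F_r \to \Omega$ (the $(1,2)$-entry of $\psi$) as a left derivation. Multiplying in $M$,
\[
\psi(\mathbf u)\psi(\mathbf v) = \begin{pmatrix} \pi(\mathbf u)\pi(\mathbf v) & \pi(\mathbf u)\, a(\mathbf v) + a(\mathbf u) \\ 0 & 1 \end{pmatrix},
\]
so $a(\mathbf{uv}) = \pi(\mathbf u)\cdot a(\mathbf v) + a(\mathbf u)$. Extending $a$ by $\mathbb Z$-linearity to $\mathbb Z(\mathbf F_r)$ and viewing $\Omega$ as a $\mathbb Z(\mathbf F_r)$-module via $\pi$, this exactly says that $a$ is a left derivation into $\Omega$. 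In particular the identities $a(\mathbf e) = 0$ and $a(\mathbf g^{-1}) = -\pi(\mathbf g^{-1})a(\mathbf g)$ follow automatically.

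Next, define the candidate
\[
d(\mathbf g) = \sum_{i=1}^r \pi(\partial_{\mathbf s_i}\mathbf g)\,\lambda_{\mathbf s_i} \in \Omega.
\]
Since each Fox derivative $\partial_{\mathbf s_i}:\mathbb Z(\mathbf F_r)\to \mathbb Z(\mathbf F_r)$ is a left derivation and $\pi$ is a ring homomorphism, the composite $\pi\circ \partial_{\mathbf s_i}$ is a left derivation into $\mathbb Z(\mathbf F_r/N)$ (as a left $\mathbb Z(\mathbf F_r)$-module via $\pi$). Summing, $d$ is a left derivation into $\Omega$. On the free generators, $a(\mathbf s_j) = \lambda_{\mathbf s_j}$ by definition of $\psi$, while $d(\mathbf s_j) = \sum_i \pi(\partial_{\mathbf s_i}\mathbf s_j)\lambda_{\mathbf s_i} = \sum_i \delta_{ij}\lambda_{\mathbf s_i} = \lambda_{\mathbf s_j}$. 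So $a$ and $d$ are two left derivations agreeing on the free generators.

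The final step is a uniqueness argument: any left derivation on $\mathbf F_r$ (extended linearly to $\mathbb Z(\mathbf F_r)$) is determined by its values on the free generators, since the Leibniz rule $d(\mathbf{uv}) = \pi(\mathbf u)\, d(\mathbf v) + d(\mathbf u)$ together with $d(\mathbf g^{-1}) = -\pi(\mathbf g^{-1})\,d(\mathbf g)$ reduces $d(\mathbf s_{i_1}^{\varepsilon_1}\cdots\mathbf s_{i_n}^{\varepsilon_n})$ to an expression in $d(\mathbf s_{i_j})$. Hence $a = d$ on all of $\mathbf F_r$, which proves the formula. I expect the only subtle bookkeeping point to be the verification of uniqueness for derivations on a free \emph{group} (as opposed to a free algebra) — one must make sure that the formula derived by iterating the Leibniz rule on a word is consistent with the free group relations $\mathbf s_i \mathbf s_i^{-1} = \mathbf e$, but this is forced by the identity $d(\mathbf e) = 0$ together with the derivation property applied to $\mathbf s_i \mathbf s_i^{-1}$. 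All other steps are routine matrix and ring computations.
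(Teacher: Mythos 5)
The paper gives no proof of this theorem; it attributes it to Fox and refers the reader to \cite[Sect.~2.3]{Myasnikov2010}, so there is no in-paper argument to compare against. Your proof is correct and is the standard one: the $(1,2)$-entry $a$ of $\psi$ satisfies $a(\mathbf{u}\mathbf{v}) = \pi(\mathbf u)a(\mathbf v) + a(\mathbf u)$ by direct matrix multiplication, so it is a left derivation from $\mathbf F_r$ into $\Omega$ viewed as a $\mathbb Z(\mathbf F_r)$-module via $\pi$; the candidate $d = \sum_{i}\pi(\partial_{\mathbf s_i}\cdot)\lambda_{\mathbf s_i}$ is likewise a left derivation because each $\pi\circ\partial_{\mathbf s_i}$ is; the two agree on the free generators; and two such derivations coincide on all of $\mathbf F_r$ by induction on reduced word length. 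Two small remarks. First, a point of attribution: the fact that $\ker\psi = [N,N]$ is Magnus's theorem, stated just earlier in the paper, not Fox's first theorem --- although Fox's first theorem (the statement that $\pi(\partial_{\mathbf s_i}\mathbf u)=0$ for all $i$ iff $\mathbf u\in[N,N]$), together with the formula you are proving, does yield Magnus's theorem as a corollary. Second, your closing worry about consistency with $\mathbf s_i\mathbf s_i^{-1} = \mathbf e$ is not actually a gap: you are not constructing a derivation from prescribed generator values, but comparing two a priori well-defined maps on $\mathbf F_r$ ($a$ because $\psi$ is a homomorphism given by the universal property of the free group, $d$ because the Fox derivatives are bona fide maps on $\mathbb Z(\mathbf F_r)$), so the induction on reduced word length suffices and no further consistency check is needed.
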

\begin{exa}
In the special case that $N=[\mathbf{F}_r,\mathbf{F}_r]$, we have $\mathbf{F}_r
/N\simeq 
\mathbb{Z}
^{r}$ and $
\mathbb{Z}
(\mathbf{F}_r/N)$ is the integral group ring over the free abelian group $
\mathbb{Z}
^{r}.$ The integral group ring $
\mathbb{Z}(
\mathbb{Z}
^{r})$ is quite similar to the multivariate polynomial ring with integer
coefficients, except that we allow negative powers like $%
Z_{1}^{-3}Z_{2}...Z_{r}^{-5}.$ The monomials $%
\{Z_{1}^{x_{1}}Z_{2}^{x_{2}}...Z_{r}^{x_{r}}:x\in
\mathbb{Z}
^{r}\}$ are $
\mathbb{Z}
$-linear independent in $
\mathbb{Z}(
\mathbb{Z}
^{r}).$
\end{exa}

\subsection{Interpretation in terms of flows} \label{sec-flow}

Following \cite{DLS,Myasnikov2010,Vershik2000}, one can also think of 
elements of $\Gamma_2(N)=\mathbf{F}_r/[N,N]$ in terms of flows on  
the (labeled) Cayley graph of $\Gamma_1(N)=\mathbf{F}_r/N$.   
To be precise, Let $\mathbf s_1,\dots,\mathbf s_k$ be the generators of 
$\mathbf F_r$ and $\bar{s}_1,\dots,\bar{s}_k$ their images in $\Gamma_1(N)$. 
The Cayley graph of  
$\Gamma_1(N)$ is the marked graph
with vertex set $V=\Gamma_1(N)$ and marked edge set 
$\mathfrak E\subset V\times V\times \{\mathbf s_1,\dots,\mathbf s_k\}$ 
where $(x,y, \mathbf s_i)\in \mathfrak E $ if and only if 
$y=x\bar{s}_i$ in $\Gamma_1(N)$. 
Note that each edge $\mathfrak e=(x,y,\mathbf s_i)$ as an origin 
$o(\mathfrak e)=x$, 
an end (or terminus) $t(\mathfrak e)=y$ and a label or mark $\mathbf s_i$.

Given a function $\mathfrak f$ on the edge set $\mathfrak E$ 
and a vertex $v\in V$, define the net flow $\mathfrak f^\ast(v)$ 
of $\mathfrak f$ at $v$ by
$$\mathfrak f^*(v)= \sum_{o(\mathfrak e)=v}f(\mathfrak e)-\sum_{t(\mathfrak e)
=v}f(\mathfrak e).$$
A \textit{flow} (or $\mathbb Z$-flow) with source $s$ and
sink $t$ is a function 
$\mathfrak f:\mathfrak  E\ra \mathbb Z$ 
such that 
\begin{eqnarray*} &&
\forall\, v\in V\backslash \{s,t\},\;\;\mathfrak{f}^{\ast }(v)=0 , \\
&&\mathfrak f^\ast(s)=1,\;\mathfrak f^\ast(t)=-1 . \nonumber
\end{eqnarray*}
If  $\mathfrak f^\ast(v)=0$ holds for all
$v\in V $, we say that $\mathfrak f$ is a \textit{circulation}.  

For each edge $\mathfrak e=(x,y,\mathbf s_i)$, introduce its formal inverse 
$(y,x,\mathbf s_i^{-1})$ and let $\mathfrak E^*$ be the set of all edges and 
their formal inverses. A finite path on the Cayley graph of $\Gamma_1(N)$ 
is a finite sequence $p=(\mathfrak e_1,\dots,\mathfrak e_\ell)$ of edges in 
$\mathfrak E^*$ so that the origin
of $\mathfrak e_{i+1}$ is the terminus of $\mathfrak e_i$. 
We call $o(\mathfrak e_1)$ (resp. $t(\mathfrak e_\ell)$)
the origin (resp. terminus) 
of the path $p$ and denote it by $o(p)$ (resp. $t(p)$).
Note that reading the labels 
along the edge of a path determines a word in the generators of $\mathbf F_r$ 
and that, conversely, any finite word $\omega$ in the generators of 
$\mathbf F_r$ determines a path $p_\omega$
starting at the indentity element in $\Gamma_1(N)$.  
 
A (finite) path $p$ determines a flow $\mathfrak f_p$ with source $o(p)$ 
and sink $t(p)$ by setting $\mathfrak f_p(e)$ to be the algebraic number of time the edge $\mathfrak e\in \mathfrak E$ is crossed positively or negatively along $p$. Here, 
the edge $\mathfrak e=(x,y,\mathbf s_\alpha)\in \mathfrak E$ 
is crossed positively at the $i$-step
along $p$ if $\mathfrak e_i=(x,y,\mathbf s_\alpha)$. It is crossed negatively if 
$\mathfrak e_i=(y,x,\mathbf s^{-1}_\alpha)$.  We note that 
$\mathfrak f_p$ has finite support and that either $o(p)=t(p)$ and 
$\mathfrak f_ p$ is a circulation or $o(p)\neq t(p)$ and 
$\mathfrak f_p^*(o(p))=1$,
$\mathfrak f_p^*(t(p))=-1$.

Given a word $\omega =\mathbf{s}_{i_{1}}^{\varepsilon _{1}}...\mathbf{s}%
_{i_{n}}^{\varepsilon _{n}}$ in the generators of $\mathbf{F}_r$, let 
$\mathfrak{f}_{\omega }$ denote the flow function on the Cayley graph 
of $\Gamma_1(N)$ defined by the corresponding path starting at 
the identity element in $\Gamma_1(N)$. We note that it is obvious from 
the definition that  $\mathfrak f_\omega=\mathfrak f_{\omega'}$ if $\omega'$ is
the reduced word in $\mathbf F_r$ associated with $\omega$.

\begin{theo}[{\protect\cite[Theorem 2.7]{Myasnikov2010}}] \label{th-flowdescr}
Two elements $\mathbf{u},\mathbf{v}\in \mathbf{F}_r$ project to the
same element in $\Gamma_2(N)=\mathbf{F}_r/[N,N]$ if and only if they 
induce the same
flow on $\Gamma_1(N)=\mathbf{F}_r/N.$ In other words,%
\[
\mathbf{u}\equiv \mathbf{v}\text{ }\mbox{\em  mod }[N,N]\;\Longleftrightarrow\; 
\mathfrak{f}_{\mathbf{u}}=\mathfrak{f}_{\mathbf{v}}. 
\]
\end{theo}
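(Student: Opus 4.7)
My plan is to derive the statement from the Magnus embedding together with the explicit Fox-derivative formulas displayed in the excerpt. By the Magnus embedding theorem, $\mathbf u\equiv\mathbf v\bmod[N,N]$ if and only if $\bar\psi(\pi_2(\mathbf u))=\bar\psi(\pi_2(\mathbf v))$, and by (\ref{Magnus}) this is the conjunction of (i) $\pi(\mathbf u)=\pi(\mathbf v)$ in $\Gamma_1(N)$ and (ii) $\pi(\partial_{\mathbf s_i}\mathbf u)=\pi(\partial_{\mathbf s_i}\mathbf v)$ in $\mathbb Z(\Gamma_1(N))$ for every $1\le i\le r$. The proof therefore reduces to identifying, for each $i$, the element $\pi(\partial_{\mathbf s_i}\mathbf u)\in\mathbb Z(\Gamma_1(N))$ with the restriction of $\mathfrak f_{\mathbf u}$ to the $\mathbf s_i$-labeled edges, and then observing that once the flows agree the endpoint condition (i) is automatic.

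The core step, and the only nontrivial one, is the dictionary between Fox derivatives and flows. Given $\mathbf g=\mathbf s_{i_1}^{\varepsilon_1}\cdots\mathbf s_{i_n}^{\varepsilon_n}$, set $x_j=\pi(\mathbf s_{i_1}^{\varepsilon_1}\cdots\mathbf s_{i_{j-1}}^{\varepsilon_{j-1}})\in\Gamma_1(N)$, the vertex reached after reading $j-1$ letters along $p_{\mathbf g}$ from $\bar e$. Projecting the Fox formula recalled in the excerpt yields
$$\pi(\partial_{\mathbf s_i}\mathbf g)=\sum_{j:\,i_j=i,\,\varepsilon_j=+1}x_j\;-\sum_{j:\,i_j=i,\,\varepsilon_j=-1}x_j\bar s_i^{-1}.$$
I plan to match the two sums term by term with the positive and negative crossings of $\mathbf s_i$-labeled edges along $p_{\mathbf g}$: a step with $\varepsilon_j=+1$ and $i_j=i$ is a positive crossing of the edge $(x_j,x_j\bar s_i,\mathbf s_i)\in\mathfrak E$, while a step with $\varepsilon_j=-1$ and $i_j=i$ moves from $x_j$ to $x_j\bar s_i^{-1}=x_{j+1}$ and is therefore a negative crossing of the edge $(x_{j+1},x_{j+1}\bar s_i,\mathbf s_i)\in\mathfrak E$. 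Collecting contributions by the origin of the edge gives
$$[\pi(\partial_{\mathbf s_i}\mathbf g)]_x=\mathfrak f_{\mathbf g}(x,x\bar s_i,\mathbf s_i)\qquad(x\in\Gamma_1(N),\ 1\le i\le r).$$

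Granting this identification, condition (ii) is equivalent to $\mathfrak f_{\mathbf u}=\mathfrak f_{\mathbf v}$ on every edge of $\mathfrak E$, and hence on all of $\mathfrak E^\ast$ by the antisymmetry convention on formal inverses. Condition (i) is then free of charge: the terminus of a path starting at $\bar e$ is recovered from its flow either as $\bar e$ (when the flow is a circulation) or as the unique vertex $t$ with $\mathfrak f^\ast(t)=-1$, so flow equality forces endpoint equality. Combining, the two Magnus conditions collapse into the single assertion $\mathfrak f_{\mathbf u}=\mathfrak f_{\mathbf v}$, which proves the theorem. The one place where I expect to have to be careful is the sign and index bookkeeping in the term-by-term matching; once the $\varepsilon_j=-1$ case is correctly identified with a negative crossing of the edge whose origin is $x_{j+1}$ rather than $x_j$, everything falls into place.
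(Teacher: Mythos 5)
Your proof is correct. Note that the paper does not actually prove this statement: it is cited verbatim from \cite[Theorem 2.7]{Myasnikov2010}, and the key identification you derive — that the coefficient of $x$ in $\pi(\partial_{\mathbf s_i}\mathbf g)$ equals $\mathfrak f_{\mathbf g}((x,x\bar s_i,\mathbf s_i))$ — is precisely Lemma \ref{FlowMagnus} (formula (\ref{FlowCoefficient})), also cited there without proof. Your route through the Magnus embedding in the form (\ref{Magnus}), the re-derivation of Lemma \ref{FlowMagnus} with the correct bookkeeping (a letter $\mathbf s_i^{-1}$ at step $j$ being a negative crossing of the edge with origin $x_{j+1}=x_j\bar s_i^{-1}$, not $x_j$), and the observation that the $(1,1)$-entry condition $\pi(\mathbf u)=\pi(\mathbf v)$ is recoverable from the flow via its source/sink structure is exactly the intended argument.
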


This theorem shows that an element $g\in \Gamma_2(N)$
corresponds to a unique flow $\mathfrak{f}_{\omega }$ on $\mathbf{F}_r/N,$
defined by the path $p_{\omega }$ associated with any  word 
$\omega \in \mathbf{F}_r$ such that $%
\omega $ projects to $g$ in $\Gamma_2(N)$. For $g\in \Gamma_2(N)$, 
$\mathfrak{f}_{g}:=\mathfrak{f}_{\omega }$ is well defined (i.e., is independent 
of the word $\omega$ projecting to $g$, and we call $\mathfrak f _g$
the flow of $g$. Hence, in a certain sense, we can  regard
elements of $\Gamma_2(N)$ as flows on $\Gamma_1(N)$. In fact, the flow $\mathfrak f_\omega$ is directly related to the description of the image of the 
element $g=\omega \mbox{ mod } [N,N]$ under the
Magnus embedding through the following geometric interpretation of 
Fox derivatives.

\begin{lem}[{\protect\cite[Lemma 2.6]{Myasnikov2010}}]
\label{FlowMagnus} Let $\omega \in \mathbf{F}_r$, then for any $g\in \mathbf{F}_r%
/N$ and $\mathbf{s}_{i}$, the value of $\mathfrak{f}_{\omega }$ on the edge $%
(g,gs_{i},\mathbf s_i)$, is equal to coefficient in front of $g$ 
in the Fox derivative $%
\pi (\partial _{\mathbf{s}_{i}}\omega )\in 
\mathbb{Z}
(\mathbf{F}/N),$ i.e.%
\begin{equation}
\pi (\partial _{\mathbf{s}_{i}}\omega )=\sum_{g\in \mathbf{F}/N}\mathfrak{f}%
_{\omega }((g,gs_{i},\mathbf s_i))g.  \label{FlowCoefficient}
\end{equation}
\end{lem}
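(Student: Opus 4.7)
The plan is to establish the identity by reducing both sides to an explicit sum over the letters of a word representing $\omega$, and then identifying the two sums term by term. Since the Fox derivative $\partial_{\mathbf s_i}$ depends only on the reduced word (it kills trivial cancellations $\mathbf s_j \mathbf s_j^{-1}$ thanks to the defining relations), and since the flow $\mathfrak f_\omega$ also depends only on the reduced word (as was remarked just before the theorem on flows), I may assume $\omega$ is written as a reduced word $\mathbf s_{i_1}^{\varepsilon_1}\cdots \mathbf s_{i_n}^{\varepsilon_n}$, or simply work with an arbitrary word since both sides factor through reduction.

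First I would expand $\partial_{\mathbf s_i}\omega$ by the explicit formula recalled in the example preceding the lemma:
\[
\partial_{\mathbf s_i}\omega = \sum_{j:\, i_j=i,\,\varepsilon_j=1}\mathbf s_{i_1}^{\varepsilon_1}\cdots \mathbf s_{i_{j-1}}^{\varepsilon_{j-1}}\;-\sum_{j:\, i_j=i,\,\varepsilon_j=-1}\mathbf s_{i_1}^{\varepsilon_1}\cdots \mathbf s_{i_{j-1}}^{\varepsilon_{j-1}}\mathbf s_{i_j}^{\varepsilon_j}.
\]
Applying $\pi$ and collecting by group element $g\in \mathbf F_r/N$, the coefficient in front of $g$ in $\pi(\partial_{\mathbf s_i}\omega)$ is the number of indices $j$ with $i_j=i$, $\varepsilon_j=+1$ and $\pi(\mathbf s_{i_1}^{\varepsilon_1}\cdots \mathbf s_{i_{j-1}}^{\varepsilon_{j-1}})=g$, minus the number of indices $j$ with $i_j=i$, $\varepsilon_j=-1$ and $\pi(\mathbf s_{i_1}^{\varepsilon_1}\cdots \mathbf s_{i_{j}}^{\varepsilon_{j}})=g$.

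Next I would compute $\mathfrak f_\omega$ on the edge $(g,g\bar s_i,\mathbf s_i)$ directly from the definition. Let $g_0=\bar e$ and $g_j=\pi(\mathbf s_{i_1}^{\varepsilon_1}\cdots \mathbf s_{i_j}^{\varepsilon_j})$ denote the vertices visited by the path $p_\omega$. At step $j$ the path traverses the directed edge from $g_{j-1}$ to $g_j$ labelled $\mathbf s_{i_j}$. This contributes $+1$ to $\mathfrak f_\omega((g,g\bar s_i,\mathbf s_i))$ exactly when $i_j=i$, $\varepsilon_j=+1$, and $g_{j-1}=g$; it contributes $-1$ exactly when $i_j=i$, $\varepsilon_j=-1$ (so the $j$-th step is the formal inverse of the positive $\mathbf s_i$-edge from $g_j$ to $g_{j-1}$), and $g_j=g$. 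Summing over $j$ yields the same count as above.

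Matching the two expressions gives
\[
\pi(\partial_{\mathbf s_i}\omega) = \sum_{g\in \mathbf F_r/N} \mathfrak f_\omega\bigl((g,g\bar s_i,\mathbf s_i)\bigr)\, g,
\]
which is the stated identity. There is essentially no obstacle here; the one subtle point I would pay attention to is the careful bookkeeping of the sign and the origin/terminus of the negatively traversed edge, so that the prefix $\mathbf s_{i_1}^{\varepsilon_1}\cdots \mathbf s_{i_{j-1}}^{\varepsilon_{j-1}}\mathbf s_{i_j}^{\varepsilon_j}$ appearing in the negative part of the Fox derivative is correctly identified with the terminus $g_j$ of the $j$-th step rather than with its origin $g_{j-1}$. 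An alternative route would be an induction on the word length of $\omega$, using the left-derivation property $\partial_{\mathbf s_i}(\mathbf u\mathbf v)=\mathbf u\,\partial_{\mathbf s_i}(\mathbf v)+\partial_{\mathbf s_i}(\mathbf u)$ together with the concatenation rule $\mathfrak f_{\mathbf u\mathbf v}(\mathfrak e)=\mathfrak f_{\mathbf u}(\mathfrak e)+\mathfrak f_{\mathbf v}(\pi(\mathbf u)^{-1}\cdot \mathfrak e)$, which gives exactly parallel recursions; but the direct term-by-term comparison above seems the cleanest.
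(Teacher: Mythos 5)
The paper does not prove this lemma itself --- it is cited directly from \cite[Lemma 2.6]{Myasnikov2010} --- so there is no in-paper argument to compare against. Your proof is correct: the expansion of $\partial_{\mathbf s_i}\omega$ via the displayed formula, the identification of the coefficient of $g$ after applying $\pi$, and the term-by-term match with the contributions of positive and negative $\mathbf s_i$-crossings of the edge $(g,g\bar s_i,\mathbf s_i)$ are all handled carefully, including the one genuinely delicate point (that a negative crossing at step $j$ is attributed to the terminus $g_j$, matching the extra factor $\mathbf s_{i_j}^{\varepsilon_j}$ in the negative part of the Fox derivative). Your remark that both sides factor through reduction, and your sketch of the alternative induction using $d(xy)=xd(y)+d(x)$ together with $\mathfrak f_{\mathbf u\mathbf v}=\mathfrak f_{\mathbf u}+\tau_{\pi(\mathbf u)}\mathfrak f_{\mathbf v}$, are also sound and align with the structure the paper itself uses elsewhere (Remark \ref{rem-alg-a}).
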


There is also a characterization of geodesics on $\Gamma_2(N)$ in
terms of flows (see \cite[Theorem 2.11]{Myasnikov2010}) which is closely 
related to the description of geodesics on wreath products. 
See \cite[Theorem 2.6]{Sale} where it is proved that the Magnus embedding is 
bi-Lipschitz with small explicit universal distortion.

\begin{rem}\label{needN} 
In \cite{Erschler2004}, it is asserted that the group $\Gamma_2(N)$
depends only of $\Gamma_1(N)$ (in \cite{Erschler2004}, $\Gamma_1(N)$ is denoted by $A$ and $\Gamma_2(N)$ by $C_A$). This assertion is correct only if one 
interprets $\Gamma_1(N)$ as a marked group, i.e., if information about 
$\pi: \mathbf F_r \ra \Gamma_1(N)$ is retained. Indeed,
$\Gamma_2(N)$ depends in some essential ways of the choice of the presentation 
$\Gamma_1(N)=\mathbf F_r/N$. 
We illustrate this fact by two examples that are very good to keep in mind.
\end{rem}

\begin{exa}
Consider two presentations of $\mathbb{Z}$, namely, 
$\mathbb{Z}=\mathbf{F}_{1}$ and $%
\mathbb{Z=}\left\langle a,b|b\right\rangle $. In the first presentation, the
kernel $N_{1}$ is trivial, therefore $\mathbf{F}_{1}/[N_{1},N_1]\simeq 
\mathbb{Z}$. In the second presentation, the kernel $N_{2}$ is the normal
closure of $\left\langle b\right\rangle $ in the free group $\mathbf{F}_{2}$
on generators $a,b$. Hence, $N_{2}$ is generated by 
$\{a^{i}ba^{-i},i\in \mathbb{Z}%
\}$. We can then write down a presentation of $\mathbf{F}_{2}/[N_{2},N_2]
$ in the form
\[
\mathbf{F}_{2}/[N_{2},N_2]=\left\langle
a,b|[a^{i}ba^{-i},a^{j}ba^{-j}],i,j\in \mathbb{Z}\right\rangle . 
\]%
This is, actually,  a presentation of the wreath product $\mathbb{Z\wr Z}$.
Therefore $\mathbf{F}_{2}/N_{2}^{\prime }\simeq \mathbb{Z\wr Z}$. 
We encourage the reader to recognize the structure of both 
$\mathbf{F}_{1}/[N_{1},N_1]\simeq \mathbb Z$ and 
$\mathbf{F}_{2}/[N_{2},N_2]\simeq \mathbb{Z\wr Z}$ using flows on the 
labeled Cayley graphs associated with $\mathbf F_1/N_1$ and $\mathbf F_2/N_2$. 
The Cayley graph of $\mathbf F_2/N_2$ is the usual line graph of $\mathbb Z$ 
decorated with  an oriented loop at each vertex. In the flow representation of an element of 
$\mathbf F_2/[N_2,N_2]$, the algebraic number of times the 
flow goes around each of these loops is recorded thereby creating 
the wreath product struture of $\mathbb Z\wr \mathbb Z$. 
\end{exa}

\begin{exa}Consider the following two presentations of $\mathbb{Z}^{2}$, 
\begin{eqnarray*}
\mathbb{Z}^{2} &=&\left\langle a,b|[a,b]\right\rangle  \\
\mathbb{Z}^{2} &\mathbb{=}&\left\langle a,b,c|[a,b],c=ab\right\rangle .
\end{eqnarray*}%
Call $N_1\subset \mathbf F_2$ and $N_2\subset \mathbf F_3$ be the 
associated normal subgroups.
We claim that  $\mathbf{F}_{2}/[N_{1},N_1]$ is a proper quotient  of $%
\mathbf{F}_{3}/[N_{2},N_2]$. Let $\theta :\mathbf{F}_{3}\rightarrow 
\mathbf{F}_{2}$ be the homomorphism determined by $\theta (a)=a$, $\theta
(b)=b$, $\theta (c)=ab$. Obviously, $N_{2}=\theta ^{-1}(N_{1})$,
$[N_{2},N_2]\subset \theta ^{-1}([N_{1},N_1])$, and  $\theta $ 
induces a surjective
homomorphism $\theta ^{\prime }:\mathbf{F}_{3}/[N_{2},N_2]\rightarrow 
\mathbf{F}_{2}/[N_{1},N_1]$. The element $abc^{-1}$ is nontrivial in $%
\mathbf{F}_{3}/[N_{2},N_2]$, but $\theta ^{\prime }(abc^{-1})=e$.  A
Hopfian group is a group that cannot be isomorphic to a proper quotient 
of itself. Finitely generated metabelian groups are Hopfian. Hence 
$\mathbf{F}_{2}/[N_{1},N_1]$ is not isomorphic to  $%
\mathbf{F}_{3}/[N_{2},N_2]$.
\end{exa}

\section{Return probability lower bounds} \setcounter{equation}{0}
\label{sec-low}

\subsection{Measures supported by the powers of the generators}\label{sec-mes}
The group $\Gamma_2(N)=\mathbf F_r/[N,N]$ commes equiped with the generators $(s_i)_1^r$ which are the images of the generators $(\mathbf s_i)_1^r$ 
of $\mathbf F_r$. Accordingly, we consider a special class of symmetric 
random walks defined as follows. Given probability measures 
$p_i$, $1\le i\le r$ on $\mathbb Z$, we define a probability measure 
$\boldsymbol \mu$ on $\mathbf F_r$ by
\begin{equation} 
\forall\; \mathbf g \in \mathbf F_r,\;\;\boldsymbol{\mu }(\mathbf{g})=\sum_{i=1}^{r}\frac{1}{r}\sum_{m\in 
\mathbb{Z}
}p _{i}(m)\mathbf{1}_{\{\mathbf{s}_{i}^{m}\}}(\mathbf{g}).
\label{pushforward}
\end{equation}
This probability measure induces pushforward measures  $\bar{\mu}$ and $\mu$
on  $\Gamma_1(N)=
\mathbf F_r/N$ and $\Gamma_2(N)=\mathbf F_r/[N,N]$, namely, 
\begin{equation}\left\{\begin{array}{l}
\forall\,\bar{g}\in \Gamma_1(N),\;\;\bar{\mu}(\bar{g})= 
\boldsymbol{\mu}(\pi^{-1}(\bar{g}))\\
\forall\,g\in \Gamma_2(N),\;\;\mu(g)= 
\boldsymbol{\mu}(\pi_2^{-1}(g)). \end{array}\right.\label{defmu}
\end{equation}  

In fact, we will mainly consider two cases. In the first case, each $p_i$
is the measure of the lazy random walk on $\mathbb Z$, 
that is $p_i(0)=1/2$, $p_i(\pm 1)=1/4$. In this case, $\boldsymbol{\mu}$ is 
the measure of the lazy simple random walk on $\mathbf F_r$, that is,
\begin{equation}\label{lazysr}
\boldsymbol{\mu }(e)=1/2,\;\;\boldsymbol{\mu}(\mathbf s_i^{\pm 1})=1/4r.
\end{equation}
The second case can be viewed as a generalization of the first. 
Let $a=(\alpha_1)_1^r\in (0,\infty]^r$ be a $r$-tuple of extended positive reals.
For each $i$, consider the symmetric probability measure $p_{\alpha_i}$
on $\mathbb Z$ with $p_{\alpha_i}(m)=c_i(1+|m|)^{-1-\alpha_i}$
(if $\alpha_i=\infty$, set $p_\infty(0)=1/2$, $p_\infty(\pm 1)=1/4$). 
Let
$\boldsymbol{\mu}_a$ be the measure on $\mathbf F_r$ obtained by 
setting $p_i=p_{\alpha_i}$ in (\ref{pushforward}). When $a$ is such that 
$\alpha_i=\infty$ for all $i$ we recover (\ref{lazysr}).  
In particular, starting with (\ref{lazysr}),  
$\mu$ is given by
$$ \forall\,g\in \Gamma_2(N),\;\;\;
\mu(g)= \frac{1}{2} \mathbf 1_{g}(e) +\frac{1}{4r}\sum_1^r\mathbf 1_{s_i}(g).$$
The formula for $\bar{\mu}$ is exactly similar.  
For any fixed $a\in (0,\infty]^r$, we let $\mu_a$ and $\bar{\mu}_a$ 
be the pushforward of $\boldsymbol{\mu}_a$ on $\Gamma_2(N)$ and $\Gamma_1(N)$, respectively.

\subsection{Lower bound for simple random walk}\label{sec-low1}
In this section, we explain how, in the case of the lazy simple random walk 
measure $\mu$ on $\Gamma_2(N)$ associated with $\boldsymbol{\mu}$ 
at (\ref{lazysr}), 
one can obtained lower bounds for the probability of return 
$\mu^{(2n)}(e)$ by using well-known arguments 
and the notion of F\o lner couples introduced in \cite{CGP}. 

\begin{defin}[See {\cite[Definition 4.7]{CGP}} and 
{\cite[Proposition 2]{Erschler2006}}] \label{folner}
Let $G$ be a finitely generated group equipped with a finite 
symmetric genereting set $T$ and the associated word distance $d$.
Let $\mathcal V$ be a positive increasing function on $[1,\infty)$
whose inverse is defined on $[\mathcal V(1),\infty)$. We say that a sequence 
of pairs of nonempty sets $((\Omega_k,\Omega'_k))_1^\infty$ is a
a sequence of F\o lner couples adapted to $\mathcal V$ if 
\begin{enumerate}
\item  $\Omega'_k\subset \Omega_k$, 
$\#\Omega'_k \ge c_0 \#\Omega_k$, $d(\Omega'_k,\Omega_k^c)\ge c_0k$.
\item 
$v_k=\#\Omega_k \nearrow \infty$ and  
$v_k\le \mathcal V(k)$. 
\end{enumerate}
\end{defin} 
Let $\nu$ be a symmetric finitely suppported measure on $G$ and $\lambda_1(\nu,\Omega)$ be the lowest Dirichlet eignevalue in $\Omega$
for the convolution by $\delta_e-\nu$, namely,
$$\lambda_1(\nu,\Omega)=\inf\left\{\sum_{x,y}|f(xy)-f(x)|^2\nu(y): 
\mbox{supp}(f)\in \Omega, \sum |f|^2=1\right\}.$$ 
If $(\Omega_k,\Omega_k')$ is a pair satisfying the first condition in 
Definition \ref{folner} then plugging $f=d(\cdot,\Omega_k^c)$ in the definition of of $\lambda_1(\nu,\Omega_k)$ immeditely gives
$\lambda_1(\nu,\Omega_k)\le \frac{C}{k^2}.$

Given a function $\mathcal V$ as in Definition \ref{folner}, let $\gamma$
be defined implicitely by
\begin{equation}\label{gamma}
\int_{\mathcal V(1)}^{\gamma(t)}([\mathcal V^{-1}(s)]^2\frac{ds}{s}=t.
\end{equation}
This is the same as stating that $\gamma$ is a solution of the 
differential equation 
\begin{equation}\label{gamma'}
\frac{\gamma'}{\gamma}=\frac{1}{[\mathcal V ^{-1}\circ \gamma]^2},\;\;\gamma(0)=\mathcal V(1).\end{equation}
Following \cite{Erschler2006}, we say that $\gamma$ is $\delta$-regular  if  $\gamma'(s)/\gamma(s)\ge \delta \gamma'(t)/\gamma(t)$ for all $s,t$ with 
$0<t<s<2t$.

With this notation, Erschler \cite[Proposition 2]{Erschler2006} gives a 
modified version of \cite[Theorem 4.7]{CGP} which contains the following 
statement.
\begin{pro} \label{prob-FC}
If the group $G$ admits a sequence of F\o lner couples adapted
to the function $\mathcal V$ as in {\em
Definition \ref{folner}} and  the function $\gamma$ associated to $\mathcal V$ by {\em (\ref{gamma})} is $\delta$-regular for some $\delta>0$ then there exist 
$c,C\in (0,\infty)$ such that
$$\Phi_G(n)\ge \frac{c}{\gamma (Cn)}.$$
\end{pro}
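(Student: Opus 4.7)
The plan is to extract from the F\o lner couple hypothesis a parametric family of lower bounds of the form $\nu^{(2n)}(e)\ge v_k^{-1}\,e^{-Cn/k^2}$ valid for every admissible $k$, and then optimize over $k$, matching the outcome with $1/\gamma(Cn)$ by means of the ODE characterization $\gamma'/\gamma=1/[\mathcal V^{-1}(\gamma)]^2$. The $\delta$-regularity of $\gamma$ enters exactly in the matching step, ensuring that the continuous optimization implicit in the ODE is quantitatively tracked by a concrete choice of $k$.

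For the parametric bound, fix a symmetric finitely supported measure $\nu$ with generating support on $G$, so that $\nu^{(2n)}(e)\simeq \Phi_G(n)$ by the invariance recalled in the introduction. The inequality $\lambda_1(\nu,\Omega_k)\le C/k^2$ is given in the excerpt (via the test function $d(\cdot,\Omega_k^c)$). Let $P_k$ denote the Dirichlet restriction of the $\nu$-convolution operator to $\ell^2(\Omega_k)$. Then $P_k$ is self-adjoint with top eigenvalue $1-\lambda_1(\nu,\Omega_k)/2\ge 1-C/(2k^2)$, and each diagonal entry $\langle P_k^{2n}\delta_x,\delta_x\rangle$ is bounded by the unrestricted return probability $\nu^{(2n)}(e)$ (restriction only decreases transition probabilities). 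Taking the trace and retaining only the top eigenvalue,
$$v_k\,\nu^{(2n)}(e)\ge \operatorname{tr}(P_k^{2n})\ge \bigl(1-C/(2k^2)\bigr)^{2n}\ge e^{-C'n/k^2}$$
once $k^2\ge C$, which yields the parametric estimate $\nu^{(2n)}(e)\ge \mathcal V(k)^{-1}e^{-C'n/k^2}$.

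For the optimization, take logarithms: $\log(1/\nu^{(2n)}(e))\le \inf_{k}\{C'n/k^2+\log\mathcal V(k)\}$. Integrating the ODE (\ref{gamma'}) gives $\log\gamma(t)=\int_0^t du/k(u)^2$ with $k(u):=\mathcal V^{-1}(\gamma(u))$, and the natural choice $k=k(Cn)$ balances $\log\mathcal V(k)=\log\gamma(Cn)$ against the residual term $C'n/k^2$. The $\delta$-regularity quantifies this balance: since $(\log\gamma)'$ cannot drop by more than a factor $1/\delta$ on an interval $[t/2,t]$, we get $\log\gamma(Cn)\ge (\delta Cn/2)/k(Cn)^2$, hence $C'n/k^2\le (2C'/\delta C)\log\gamma(Cn)$. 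Combining the two contributions, $\log(1/\nu^{(2n)}(e))\le C''\log\gamma(Cn)+O(1)$. Writing $H=\log\mathcal V$, the factor $C''$ is absorbed trivially when $k^*H'(k^*)/H(k^*)\to 0$ at the true optimum $k^*$ (the polynomial-growth regime, in which the first term is $O(1)$ and the second term dominates), and by rescaling $n$ when $\gamma$ grows at least exponentially (since then $\gamma(Cn)^{C''}\simeq \gamma(C'''n)$). Either way one obtains $\nu^{(2n)}(e)\ge c/\gamma(Cn)$, and the invariance $\nu^{(2n)}(e)\simeq \Phi_G(n)$ finishes the argument.

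The main obstacle I expect is precisely this final bookkeeping of the multiplicative constant $C''$ sitting on $\log\gamma(Cn)$: one has to convert an inequality of the form $\nu^{(2n)}(e)\ge \gamma(Cn)^{-C''}$ into the cleaner $c/\gamma(C'''n)$ required by the proposition. This is where $\delta$-regularity is indispensable, as without it $\gamma$ could exhibit bursts of growth on short intervals of length $o(n)$, forcing a suboptimal choice of $k$ and making the crude estimate $C'n/k^2\le (2C'/\delta C)\log\gamma(Cn)$ irreducibly lossy; with $\delta$-regularity, the analysis can be performed within a single $\simeq$-equivalence class of functions. The passage from the specific measure $\nu$ to the invariant $\Phi_G$ is then routine via the Pittet--Saloff-Coste invariance theorem.
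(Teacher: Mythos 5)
The paper does not prove this proposition; it quotes it from Coulhon--Grigor'yan--Pittet \cite{CGP} (Theorem 4.7) as modified by Erschler \cite{Erschler2006} (Proposition 2). Your blind reconstruction is therefore a reconstruction of that literature proof, and the overall strategy you use -- (a) pass from the F\o lner couple to the Dirichlet spectral gap bound $\lambda_1(\nu,\Omega_k)\le C/k^2$, (b) lower bound $\nu^{(2n)}(e)$ by $v_k^{-1}\operatorname{tr}(P_k^{2n})\ge v_k^{-1}(1-C/(2k^2))^{2n}$ for a lazy walk, and (c) optimize over $k$ using the ODE form $(\log\gamma)'(t)=1/[\mathcal V^{-1}(\gamma(t))]^2$ -- is exactly the standard approach. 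One small remark: the minor appeal to $\delta$-regularity you make in the lower bound $\log\gamma(Cn)\ge(\delta Cn/2)(\log\gamma)'(Cn)$ is actually not needed there, since $(\log\gamma)'=1/k(\cdot)^2$ is already monotone decreasing and gives $\log\gamma(Cn)\ge(Cn/2)(\log\gamma)'(Cn)$ for free.

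Where the argument is genuinely loose is the final constant bookkeeping, which you flag yourself. Your case split (polynomial regime, where $C'n/k^2=O(1)$, versus exponential-or-faster regime, where $\gamma(Cn)^{C''}\simeq\gamma(C'''n)$) is heuristically reasonable but is not obviously exhaustive and is not where $\delta$-regularity is most naturally used. The clean way to absorb the residual term is to use $\delta$-regularity in the forward direction: for $u\in(Cn,2Cn)$ one has $(\log\gamma)'(u)\ge\delta(\log\gamma)'(Cn)$, so
\[
\log\gamma(2Cn)-\log\gamma(Cn)=\int_{Cn}^{2Cn}(\log\gamma)'(u)\,du \;\ge\; \delta\, Cn\,(\log\gamma)'(Cn)\;=\;\frac{\delta\, Cn}{k(Cn)^2}.
\]
Choosing $C\ge C'/\delta$ then gives $C'n/k(Cn)^2\le \log\gamma(2Cn)-\log\gamma(Cn)$, and plugging $k=k(Cn)$ into your parametric bound yields directly $\log(1/\nu^{(2n)}(e))\le \log\gamma(Cn)+C'n/k(Cn)^2\le\log\gamma(2Cn)$, i.e.\ $\nu^{(2n)}(e)\ge 1/\gamma(2Cn)$, with no powers $\gamma(\cdot)^{C''}$ ever appearing. (There is also the small issue that $k$ ranges over the integers for which F\o lner couples are provided, so one should round $k(Cn)$, but that costs only a bounded constant.) With that fix the argument is complete and coincides with the cited references.
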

A key aspect of this statement is that it allows for very fast growing 
$\mathcal V$ as long as one can check that $\gamma$ is 
$\delta$-regular. Erschler \cite{Erschler2006} gives a variety of examples
showing how this works in practice but it seems
worth explaining why the $\delta$-regularity of $\gamma$ is a relatively 
mild assumption.   
Suppose first that $\mathcal V$ is regularly varying of positive finite index.
Then the same is true for $\mathcal V^{-1}$ and 
$\int_{\mathcal V(1)}^T\mathcal V^{-1}(s)^2\frac{ds}{s}\sim 
c \mathcal V^{-1}(T)^2$. In this case, it follows 
from (\ref{gamma'}) that $\gamma'(s)/\gamma(s)\simeq  1/s$.
If instead we assume that $\log \mathcal V$ is of regular variation 
of positive index
(resp. rapid variation) then
$\mathcal V^{-1}\circ \exp$ is of regular variation of positve index 
(resp. slow variation) and we can show that
$$\int_{\mathcal V(1)}^T\mathcal V^{-1}(s)^2\frac{ds}{s}\simeq 
\mathcal V^{-1}(T)^2 \log T.$$ In this case, it follows again that
$\gamma$ is $\delta$-regular. All the examples treated in \cite{Erschler2006} 
and in the present paper fall in these categories.

The following proposition regarding wreath products is key.

\begin{pro}[Proof of {\cite[Theorem 2]{Erschler2006}}] \label{wr-FC}
Assume that the group  $G$ is infinite, finitely generated, 
and admits  a sequence of F\o lner couples adapted to the function $\mathcal V$ as in 
{\em Definition \ref{folner}}. Set
\begin{eqnarray*}
\Theta _{k} &=&\{(f,x)\in 
\mathbb{Z}
^{r}\wr G:x\in \Omega _{k},\;\;\text{\em supp}(f)\subset \Omega _{k},\left\vert
f\right\vert _{\infty }\leq k\#\Omega _{k}
\}, \\
\Theta _{k}^{\prime } &=&\{(f,x)\in 
\mathbb{Z}
^{r}\wr G: x\in \Omega _{k}^{\prime },\;\;
\text{\em supp}(f)\subset \Omega _{k},\left\vert
f\right\vert _{\infty }\leq k\#\Omega _{k}-k
\}.
\end{eqnarray*}%
Set $$\mathcal{W}(v):=\exp\left( C \mathcal{V}(v) \log \mathcal V(v)\right).$$
Then $(\Theta_k,\Theta'_k)$ is a sequence of F\o lner couples on $\mathbb Z^r\wr G$ adapted to $\mathcal W$ (for an appropriate choice of the constant $C$).
\end{pro}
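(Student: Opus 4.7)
The plan is to verify both conditions of Definition \ref{folner} for the pairs $(\Theta_k,\Theta_k')$ on $\mathbb Z^r\wr G$ (equipped with its natural generating set coming from generators of $G$ together with the standard generators of $\mathbb Z^r$ placed above the identity of $G$), with the volume function $\mathcal W$ given in the statement.

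First I would handle the volume/cardinality bookkeeping. An element of $\Theta_k$ is a pair $(f,x)$ where $x\in\Omega_k$ and $f\colon\Omega_k\to\mathbb Z^r$ with $|f|_\infty\le k\#\Omega_k$, so
\[
\#\Theta_k = \#\Omega_k \cdot \bigl(2k\#\Omega_k+1\bigr)^{r\#\Omega_k},
\quad
\#\Theta_k' = \#\Omega_k' \cdot \bigl(2k\#\Omega_k-2k+1\bigr)^{r\#\Omega_k}.
\]
Taking logarithms and using $\#\Omega_k\le \mathcal V(k)$ together with $\log k = O(\log \mathcal V(k))$ (which is harmless since $\mathcal V$ is increasing to infinity, and otherwise one absorbs the $\log k$ into $\log \mathcal V(k)$), one obtains $\log\#\Theta_k \le C\mathcal V(k)\log\mathcal V(k)$, i.e.\ $\#\Theta_k\le \mathcal W(k)$ for an appropriate constant $C$. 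For the ratio, the first factor of $\#\Theta_k'/\#\Theta_k$ is at least $c_0$ by the hypothesis on $(\Omega_k,\Omega_k')$, and the second factor is
\[
\left(1-\frac{2k}{2k\#\Omega_k+1}\right)^{r\#\Omega_k}
\ge \left(1-\frac{1}{\#\Omega_k}\right)^{r\#\Omega_k},
\]
which tends to $e^{-r}$ and is therefore bounded below by a positive constant. So $\#\Theta_k'\ge c_0'\#\Theta_k$.

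The main point requiring care is the distance condition $d(\Theta_k',\Theta_k^c)\ge c_0k$ in the word-metric of $\mathbb Z^r\wr G$. Let $(f,x)\in \Theta_k'$ and consider any path in the Cayley graph of $\mathbb Z^r\wr G$ leading to a point $(f',x')\notin \Theta_k$. Along this path, lamp-generators can only modify $f$ at the current position of the lamplighter. Hence if $(f',x')\notin\Theta_k$ through violation of $\mathrm{supp}(f')\subset\Omega_k$, then the lamplighter must have visited some point of $\Omega_k^c$, requiring at least $d(\Omega_k',\Omega_k^c)\ge c_0k$ base-steps. If $(f',x')\notin\Theta_k$ through $x'\notin\Omega_k$, the same base-distance lower bound applies. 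Finally, if the violation comes from $|f'|_\infty > k\#\Omega_k$, then starting from $|f|_\infty \le k\#\Omega_k - k$ we need at least $k$ lamp-increments to reach that value. In all three cases the number of steps is at least $\min(c_0k,k)\ge c_0k$ (after relabeling $c_0$), which is precisely the required separation. The sequence $v_k=\#\Theta_k$ clearly tends to infinity, completing the verification of both conditions of Definition \ref{folner}.

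The construction of the thresholds $k\#\Omega_k$ and $k\#\Omega_k - k$ in the definitions of $\Theta_k$ and $\Theta_k'$ is exactly what makes the two contributions (base movement and lamp saturation) balance, so that one obtains the same linear-in-$k$ separation as on $G$ itself, while the large lamp range $k\#\Omega_k$ is precisely what blows up the volume from $\mathcal V(k)$ to $\exp(C\mathcal V(k)\log\mathcal V(k)) = \mathcal W(k)$. I expect the only subtle point to be the geometric argument above for the distance inequality, since one must correctly translate moves in the wreath-product Cayley graph into constraints on lamp configurations and base position.
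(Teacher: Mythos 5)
Your proposal is correct and follows essentially the same route as the paper: compute cardinalities, bound the ratio $\#\Theta_k'/\#\Theta_k$ from below by $\frac{1}{e^r}\cdot\frac{\#\Omega_k'}{\#\Omega_k}$, bound $\#\Theta_k$ by $\exp(C\mathcal V(k)\log\mathcal V(k))$, and observe that $d(\Theta_k',\Theta_k^c)$ is at least $\min(k,d(\Omega_k',\Omega_k^c))$. The only difference is that you spell out the three-case distance argument that the paper dispatches with ``by construction.''
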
 
\begin{proof} By construction (and with an obvious choice of generators in 
$\mathbb Z^r\wr G$ based on a given set of generators for $G$), 
the distance  between $\Theta_k'$ and 
$\Theta_k^c$ in $\mathbb Z^r\wr G$ is greater or equal to the minimum of $k$ and the distance between 
$\Omega_k'$ and $\Omega_k^c$ in $G$. Also, we have
$$\#\Theta_k =\# \Omega_k (k\#\Omega_k)^{r\#\Omega_k},\;\;
\#\Theta'_k = \# \Omega'_k (k\#\Omega_k -k)^{r\#\Omega_k}$$
so that $$ \frac{\#\Theta'_k}{\#\Theta_k}\ge  \left(1-(\#\Omega_k)^{-1}\right)^{r\#\Omega_k}\frac{\#\Omega'_k}{\#\Omega_k}\ge \frac{1}{e^r}\frac{\#\Omega'_k}{\#\Omega_k}$$
and
$$\#\Theta_k =\exp\left( \log \#\Omega_k+ r\#\Omega_k(\log \#\Omega_k +\log k)\right)\le \exp\left(C \mathcal V(k)\log \mathcal V(k)\right).$$ 
\end{proof}

\begin{pro}[Computations]\label{pro-comp}
Let $\mathcal V$ be given. 
Define $\mathcal W$ and $\gamma=\gamma_{\mathcal W}$ by
$$\mathcal W=\exp(C\mathcal V \log \mathcal V) \mbox{ and }
\gamma^{-1}(t)=\int _{\mathcal W(1)}^t [\mathcal W^{-1}(s)]^2\frac{ds}{s}.$$
\begin{enumerate}
\item Assume that $\mathcal V(t)\simeq t^D$. Then  we have $$\gamma(t)\simeq
\exp\left( t^{D/(2+D)}[\log t]^{2/(2+D)}\right).$$
\item Assume that  
$\mathcal V(t) \simeq \exp( t^\alpha  \ell(t))$, $\alpha>0$, where 
$\ell(t)$ is slowly varying with $\ell(t^a)\simeq \ell(t)$ 
for any fixed $a>0$. Then $\gamma$ satisfies
$$\gamma(t)\simeq \left( t \left(\frac{\ell(\log t)}{\log t}\right)^{2/\alpha}  \right).$$
\item Assume that  
$\mathcal V(t) \simeq \exp(\ell^{-1} (t))$ where 
$\ell(t)$ is slowly varying with $\ell(t^a)\simeq \ell(t)$ 
for any fixed $a>0$. Then $\gamma$ satisfies
$$\gamma(t)\simeq \left( t/[\ell(\log t)]^2  \right).$$
Note that if $\ell^{-1}(t)=\exp\circ \cdots \circ \exp (t\log t)$ 
with $m$ exponentials then 
$$\ell (t) \simeq \frac{\log _m t}{\log _{m+1}(t)}.$$  
\end{enumerate}
\end{pro}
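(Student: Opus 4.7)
The plan is to handle each case by the same three-step scheme: first identify $\mathcal W^{-1}$ asymptotically; then evaluate the defining integral for $\gamma^{-1}$ via the substitution $v=\log s$ together with Karamata's theorem on integration of slowly varying functions; finally invert to read off $\gamma$.

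For case (1), $\log\mathcal W(t)\simeq DC\,t^D\log t$, so taking two logarithms and solving for $u=\mathcal W^{-1}(s)$ gives $u\simeq(\log s/\log\log s)^{1/D}$. The substitution $v=\log s$ converts the defining integral to $\int^{\log t}(v/\log v)^{2/D}\,dv$, which by Karamata is asymptotic to $(\log t)^{1+2/D}/(\log\log t)^{2/D}$. Inverting this relation, using $\log u\simeq(1+2/D)\log\log t$, yields $\gamma(u)\simeq\exp(u^{D/(D+2)}(\log u)^{2/(D+2)})$, as claimed.

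For case (2), $\log\log\mathcal W(t)\simeq t^\alpha\ell(t)$. Inversion of the regularly-varying map $u\mapsto u^\alpha\ell(u)$ of index $\alpha$, together with the hypothesis $\ell(t^a)\simeq\ell(t)$, gives $\mathcal W^{-1}(s)\simeq(\log\log s/\ell(\log\log s))^{1/\alpha}$. The successive substitutions $v=\log s$ and $w=\log v$ transform the integral into $\int^{\log\log t}(w/\ell(w))^{2/\alpha}e^w\,dw$, which is dominated by its upper endpoint and hence behaves like $\log t\cdot(\log\log t/\ell(\log\log t))^{2/\alpha}$. Inversion, swapping $\log\log t$ for $\log u$ via $\ell(t^a)\simeq\ell(t)$, then yields $\log\gamma(u)\simeq u(\ell(\log u)/\log u)^{2/\alpha}$. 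Case (3) follows the same template: one reads off $\mathcal W^{-1}(s)\simeq\ell(\log\log s)$ directly; Karamata on the slowly varying integrand $\ell(\log v)^2$ gives $\gamma^{-1}(t)\simeq\log t\cdot\ell(\log\log t)^2$; inversion then gives $\log\gamma(u)\simeq u/\ell(\log u)^2$. The auxiliary identification of $\ell$ when $\ell^{-1}(t)$ equals an $m$-fold iterated exponential of $t\log t$ is a direct verification: setting $y=\ell^{-1}(t)$, one has $\log_{[m]}y=t\log t$ and $\log_{[m+1]}y\sim\log t$, so $\ell(y)=t\sim\log_{[m]}y/\log_{[m+1]}y$.

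The main obstacle throughout is the careful bookkeeping of slowly varying quantities under repeated logarithmic substitutions: in cases (2) and (3) one must repeatedly swap $\ell$ evaluated at $\log\log t$ for $\ell$ evaluated at $\log u$ or $\log t$ up to multiplicative constants, and this is precisely what the hypothesis $\ell(t^a)\simeq\ell(t)$ is designed to enable. One also needs to verify, in the endpoint-domination step of case (2), that the slowly varying prefactor does not disturb the leading $e^w$ behavior of the integrand. Beyond these points, the arguments reduce to routine asymptotic inversion.
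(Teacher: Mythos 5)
Your proof is correct, and in fact the paper offers no proof of this proposition at all — it is labeled ``Computations'' and the verifications are left to the reader, so there is no ``paper's proof'' to compare against. Your three-step scheme (invert $\mathcal W$ asymptotically via iterated logarithms, substitute $v=\log s$ and, where needed, $w=\log v$ in the defining integral and then apply Karamata's theorem or endpoint domination, finally invert the resulting relation) is the natural one and matches the method hinted at in the paragraph preceding the proposition, where the authors discuss the cases $\mathcal V$ regularly varying versus $\log\mathcal V$ regularly varying or rapidly varying and the estimate $\int^T[\mathcal V^{-1}(s)]^2\,ds/s\simeq[\mathcal V^{-1}(T)]^2\log T$. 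One remark worth making: in parts (2) and (3) the displayed formulas in the paper appear to be missing an $\exp$ (compare with part (1) and with Example \ref{exa-wr}, where $\Gamma_1(N)=\mathbb Z^b\wr\mathbb Z^D$ yields $\Phi_{\Gamma_2(N)}(n)\ge\exp(-cn(\log\log n/\log n)^{2/D})$, forcing $\gamma$ to be of exponential type); your phrasing in terms of $\log\gamma(u)$ silently corrects this and is the form actually used downstream. The only step where I would ask you to be slightly more explicit is the endpoint-domination in case (2): one should record that if $g$ is regularly varying (of any index) then $\int^W g(w)e^w\,dw\sim g(W)e^W$, which follows from splitting at $W-K$ and using the local uniformity of regular variation — you flag this but it deserves the one-line justification.
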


\begin{theo} \label{theo-fol}
Let $N$ be a normal subgroup of $\mathbf F_r$. Assume that
the group $\Gamma_1(N)=\mathbf F_r/N$ admits a sequence of F\o lner couples adapted to 
the function $\mathcal V$ as in {\em Definition \ref{folner}}. Let
$\mathcal W$ and $\gamma=\gamma_\mathcal W$ be related to $\mathcal V$ as in 
{\em Proposition \ref{pro-comp}}. Then we have
$$\Phi_{\Gamma_2(N)}(n)\ge \frac{c}{\gamma(Cn)}.$$ 
\end{theo}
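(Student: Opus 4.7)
The plan is to transfer the construction of Proposition \ref{wr-FC} from $\mathbb Z^r \wr \Gamma_1(N)$ to $\Gamma_2(N)$ itself, using the identification of elements of $\Gamma_2(N)$ with flows on the Cayley graph of $\Gamma_1(N)$ described in Section \ref{sec-flow}. Once a sequence of F\o lner couples adapted to $\mathcal W$ is obtained in $\Gamma_2(N)$, the conclusion follows directly from Proposition \ref{prob-FC} together with the computations in Proposition \ref{pro-comp}. The role played by $\mathbb Z^r$-valued lamp configurations in the wreath product construction is played here by integer-valued flows.

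Given the F\o lner couples $(\Omega_k,\Omega'_k)$ in $\Gamma_1(N)$ adapted to $\mathcal V$, I would set
\begin{align*}
\Xi_k &= \{g\in\Gamma_2(N) : \bar{\pi}(g)\in\Omega_k,\ \mathrm{supp}(\mathfrak f_g)\subset\Omega_k,\ |\mathfrak f_g|_\infty\leq k\,\#\Omega_k\},\\
\Xi'_k &= \{g\in\Gamma_2(N) : \bar{\pi}(g)\in\Omega'_k,\ \mathrm{supp}(\mathfrak f_g)\subset\Omega_k,\ |\mathfrak f_g|_\infty\leq k\,\#\Omega_k-k\},
\end{align*}
where $\mathrm{supp}(\mathfrak f_g)\subset\Omega_k$ means that $\mathfrak f_g$ vanishes on every edge with an endpoint outside $\Omega_k$. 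An element of $\Xi_k$ is determined by its sink $\bar{\pi}(g)\in\Omega_k$ and its flow, and the flow takes integer values bounded by $k\,\#\Omega_k$ on at most $Cr\#\Omega_k$ edges, hence
$$\#\Xi_k \leq \#\Omega_k\,(2k\#\Omega_k+1)^{Cr\,\#\Omega_k} \leq \exp\bigl(C'\mathcal V(k)\log\mathcal V(k)\bigr) = \mathcal W(k),$$
provided the constant $C$ entering the definition of $\mathcal W$ is taken large enough. A comparison of the counts for $\Xi_k$ and $\Xi'_k$ along the lines of the proof of Proposition \ref{wr-FC} (the sup-norm bound drops by only $k$, a factor controlled by $(1-(\#\Omega_k)^{-1})^{Cr\#\Omega_k}\geq c>0$, while the position ratio $\#\Omega'_k/\#\Omega_k$ is bounded below by the F\o lner couple property) yields $\#\Xi'_k\geq c\,\#\Xi_k$.

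The last and key point is the distance condition $d_{\Gamma_2(N)}(\Xi'_k,\Xi_k^c)\geq c_0 k$. Multiplying $g\in\Gamma_2(N)$ on the right by a generator $s_i^{\pm 1}$ translates the sink $\bar{\pi}(g)$ to $\bar{\pi}(g)\bar s_i^{\pm 1}$ and alters the flow $\mathfrak f_g$ only on the edge incident to the current sink labeled $\mathbf s_i$, with an increment of $\pm 1$; this is clear from the flow description of Theorem \ref{th-flowdescr} or directly from formula \eqref{Magnus} together with Lemma \ref{FlowMagnus}. Consequently, after any word of length at most $c_0 k$ in the generators, the sink moves by at most $c_0 k$ steps in $\Gamma_1(N)$ (so stays in $\Omega_k$ once it starts in $\Omega'_k$, by the adapted property of $(\Omega_k,\Omega'_k)$), the flow support grows only along the visited path which lies entirely in $\Omega_k$, and the sup-norm of the flow grows by at most $c_0 k\leq k$. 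This gives the required lower bound on the distance, and Proposition \ref{prob-FC} completes the proof. The step requiring the most care is the cardinality bound in $(a)$: one must verify that the number of edges of the Cayley graph with both endpoints in $\Omega_k$ can be absorbed into the constant $C$ in $\mathcal W = \exp(C\mathcal V\log\mathcal V)$, but this is automatic since $\#\Omega_k\leq\mathcal V(k)\to\infty$, so all polynomial-in-$\#\Omega_k$ factors are swallowed by the exponential.
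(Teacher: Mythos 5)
Your route is genuinely different from the paper's, but it has an unfilled gap. The paper's own proof is a one-liner: the Magnus embedding realizes $\Gamma_2(N)$ as a subgroup of $\mathbb Z^r\wr\Gamma_1(N)$, so by the monotonicity of $\Phi_G$ under passage to subgroups (\cite[Theorem 1.3]{Pittet2000}) one gets $\Phi_{\Gamma_2(N)}\ge\Phi_{\mathbb Z^r\wr\Gamma_1(N)}$, and then Propositions \ref{prob-FC} and \ref{wr-FC} are applied directly to the wreath product. No counting is done inside $\Gamma_2(N)$ at all. You instead attempt to build F\o lner couples in $\Gamma_2(N)$ itself out of the flow picture, which is a heavier route.

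The gap is the ratio estimate $\#\Xi'_k\ge c\,\#\Xi_k$, which you assert follows ``along the lines of the proof of Proposition \ref{wr-FC}''. It does not. In $\mathbb Z^r\wr G$ the lamp configurations supported in $\Omega_k$ form the \emph{full} product $(\mathbb Z^r)^{\Omega_k}$, so both cardinalities factor explicitly and the ratio is computed coordinate by coordinate, giving $(1-1/\#\Omega_k)^{r\#\Omega_k}\ge e^{-r}$. In $\Gamma_2(N)$ the elements with a given sink $t$ and support in $\Omega_k$ are in bijection not with an arbitrary $\mathbb Z^r$-valued function on $\Omega_k$, but with a \emph{coset of the lattice of circulations} supported on the edges inside $\Omega_k$ --- a proper sublattice of $\mathbb Z^{|E_k|}$ of rank roughly $|E_k|-\#\Omega_k+1$. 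Counting such lattice points in the two sup-norm boxes of radii $k\#\Omega_k$ and $k\#\Omega_k-k$ and showing the counts are comparable requires control on the geometry of that lattice (its covering radius versus the box size), and this is not automatic. The clean ``shrink each of $r\#\Omega_k$ independent coordinates by one part in $\#\Omega_k$'' argument is only available when the ambient set is a product. Your distance bound and your upper bound on $\#\Xi_k$ are fine, but without the ratio bound the sets $(\Xi_k,\Xi'_k)$ are not shown to be F\o lner couples, so Proposition \ref{prob-FC} cannot be invoked. The paper's use of the subgroup inequality $\Phi_H\ge\Phi_G$ for $H\le G$ circumvents this entirely; it transfers the estimate from the wreath product to $\Gamma_2(N)$ without ever needing to enumerate flows.
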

\begin{proof}By the Magnus embedding, $\Gamma_2(N)$ 
is a subgroup of $\mathbb Z^r\wr \Gamma_1(N)$. By \cite[Theorem 1.3]{Pittet2000},
it follows that $\Phi_{\Gamma_2(N)}\ge \Phi_{\mathbb Z^r\wr \Gamma_1(N)}$.
The conclusion then follows from Propositions \ref{prob-FC}--\ref{wr-FC}.
\end{proof}

\begin{exa} \label{exa-pol} 
Assume $\Gamma_1(N)$ has polynomial volume growth of degree $D$. Then 
$\Phi_{\Gamma_2(N)}(n)\ge \exp\left(-c n^{D/(2+D)}[\log n]^{2/(2+D)}\right).$
\end{exa}

\begin{exa} 
Assume $\Gamma_1(N)$ is either polycyclic or equal to the Baumslag-Solitar group
$\mbox{BS}(1,q)=\langle a,b| a^{-1}ba=b^q\rangle$, or 
equal to the lamplighter group $F\wr \mathbb Z$ with $F$ finite. Then 
$\Phi_{\Gamma_2(N)}(n)\ge \exp\left(-c n /[\log n]^{2}\right).$
\end{exa}

\begin{exa} \label{exa-wr}
Assume $\Gamma_1(N)=F\wr \mathbb Z^D$ with $F$ finite . Then 
$$\Phi_{\Gamma_2(N)}(n)\ge \exp\left(-c n/[\log n]^{2/D}\right).$$ 
If instead $\Gamma_1(N)= \mathbb Z^b\wr \mathbb Z^D$ for some integer $b\ge 1$ then
$$\Phi_{\Gamma_2(N)}(n)\ge 
\exp\left(-c n\left(\frac{\log\log n}{\log n}\right)^{2/D}\right).$$ 
\end{exa}

\subsection{Another lower bound}\label{sec-low2}
The aim of this subsection is to provide lower bounds for the probability of 
return $\mu^{*n}(e_*)$ on $\Gamma_2(N)$ when $\mu$ at (\ref{defmu})
is the pushforward of a 
measure $\boldsymbol{\mu}$ on $\mathbf F_r$ of the form (\ref{pushforward}), 
that is, supported on the powers of the generators $\mathbf s_i$, $1\le i\le r$, 
 possibly with unbounded support. Our
approach is to construct symmetric probability measure $\phi $ on $%
\mathbb{Z}
^{r}\wr \Gamma_1(N)$ such that the return probability $\phi ^{\star n}(%
e_\star)$ of the random walk driven by $\phi $ coincides with $\mu ^{\ast
n}(e_*)$. Please note that we will use the notation $\star$ for convolution on 
the wreath product $\mathbb Z^r\wr \Gamma_1(N)$ and $\ast$ for convolution 
on $\Gamma_2(N)$. We also decorate the identity element $e_*$
of $\Gamma_2(N)$ with a $*$ to distinguish it from the identity element
$e_\star$ of $\mathbb Z^r\wr \Gamma_1(N)$.  Recall that the 
identity element of $\Gamma_1(N)$ is denoted by $\bar{e}$.
We will use $(\epsilon_i)_1^r$ for the 
canonical basis of $\mathbb Z^r$.

Fix $r$ symmetric probability measures $p_i$, $1\le i\le r$ on $\mathbb Z$. 
Recall that, by definition, $\mu$ is the pushforward of $\boldsymbol{\mu}$, 
the probability measure on $\mathbf F_r$ which gives probability 
$r^{-1}p_i(n)$ to $\mathbf s_i^n$, $1\le i\le r$, $n\in \mathbb Z$. 
See (\ref{pushforward})-(\ref{defmu}).

On $%
\mathbb{Z}
^{r}\wr \Gamma_1(N),$ consider the measures $\phi _{i}$ supported on elements
of the form%
\[
g=(\delta ^{i},0)(0,\overline{s}_{i}^{m})(-\delta ^{i},0), 
\]%
where $\delta ^{i}:\mathbf{F}_r/N=\Gamma_1(N)\rightarrow 
\mathbb{Z}
^{r}$ is the function that's identically zero except that at identity $e$ of $%
\Gamma_1(N),$ $\delta ^{i}(e)=\epsilon_{i}\in \mathbb Z^r.$ 
For such $g,$ set (compare to (\ref%
{pushforward}))%
\[
\phi _{i}(g)=p_i(m). 
\]%
Note that 
\[
g^{-1}=(\delta ^{i},0)(0,\overline{s}_{i}^{-m})(-\delta ^{i},0) 
\]%
is an element of the same form, and $\phi _{i}(g^{-1})=\phi _{i}(g)=p_i(m).$ Set 
$$
\phi =\frac{1}{r}\sum_{i=1}^{r}\phi _{i}.  
$$
More formally, $\phi$ can be written as 
\begin{equation}
\forall\,g\in \mathbb Z^r\wr \Gamma_1(N),\;\;
\phi (g)=\sum_{1\leq i\leq r}\frac{1}{r}\sum_{m\in 
\mathbb{Z}
}p_{i}(m)\mathbf{1}_{\{(\delta ^{i},0)(0,\overline{s}%
_{i}^{m})(-\delta ^{i},0)\}}(g). \label{DefLower}
\end{equation}

Let $(U_{n})_1^\infty$ be a sequence of $\mathbf F_r$-valued  i.i.d.\ random
variables with distribution $\boldsymbol{\mu }$ and $
Z_{n}=U_{1}\cdots U_{n}.$ Note that the projection of $U_{n}$ to $\mathbf{F}_r
/[N,N]=\Gamma_2(N)$ (resp. $\mathbf{F}_r/N=\Gamma_1(N)$) is an i.i.d. sequence of $\Gamma_2(N)$-valued (resp. $\Gamma_1(N)$-valued) random
variables  with
distribution $\mu $ (resp. $\overline{\mu }$). Let $X_{i}$ denote the
projection of $U_{i}$ on  $\Gamma_1(N)$and  $T_{j}=X_{1}\cdots X_{j}.$
Consider the $\mathbb Z^r\wr \Gamma_1(N)$-valued random variable defined by 
\[
V_{n}= (\delta ^{i},0)(0,\overline{s}_{i}^{m})(-\delta ^{i},0) \mbox{ if }
U_{n}=\mathbf{s}_{i}^{m}. 
\]%
Then $(V_{n})_1^\infty$ is a sequence of i.i.d.\ random variables on $
\mathbb{Z}
^{r}\wr \Gamma_1(N)$ with distribution $\phi .$ Write 
\[
W_{n}=V_{1}...V_{n}. 
\]%
Then $W_{n}$ is the random walk on $\mathbb Z^r\wr \Gamma_1(N)$ driven by $\phi$.

The following proposition is based on Theorem \ref{th-flowdescr},
that is,\cite[Theorem 2.7%
]{Myasnikov2010}, which states 
that two words in $\mathbf{F}_r$ projects to the same
element in $\Gamma_2(N)$ if and only if they induce the same flow
on $\Gamma_1(N)$. In particular, the random walk on $\Gamma_2(N)$
returns to identity if and only if the path on $\Gamma_1(N)$ induces the
zero flow function.

\begin{pro}
\label{Lower} Fix a measure $\boldsymbol{\mu}$ on $\mathbf F_r$ of the form {\em (\ref{pushforward})}.
Suppose none of the $\overline{s}_{i}$ are torsion elements in $%
\Gamma_1(N)=\mathbf{F}_r/N.$ Let $\mu $ be  the probability measure on 
$\Gamma_2(N)$ defined at 
{\em (\ref{defmu})}. Let $\phi$ be the probability measure on $\mathbb Z^r\wr \Gamma_1(N)$ defined at   {\em (\ref{DefLower})}. It holds that 
\[
\mu ^{\ast n}(e_\ast)=\phi^{\star n}(e_\star). 
\]
\end{pro}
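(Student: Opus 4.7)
I would begin by computing how one step $V_n$ transforms the state $W_n$ on $\mathbb{Z}^r\wr\Gamma_1(N)$. A direct calculation in the wreath product shows that, when $U_n=\mathbf s_i^m$,
\[
V_n=(\delta^i-\tau_{\bar s_i^m}\delta^i,\,\bar s_i^m),
\]
so composition with $W_{n-1}=(f_{n-1},T_{n-1})$ updates the base to $T_n=T_{n-1}\bar s_i^m$ (hence the base walk of $W_n$ is exactly $(T_n)$) and the lamp by adding $\epsilon_i$ at $T_{n-1}$ and $-\epsilon_i$ at $T_n$. An induction then gives that the lamp of $W_n$ is supported in its $i$-th slot with value
\[
L_i(v)=\sum_{j:\,i_j=i}\bigl[\mathbf 1_{T_{j-1}=v}-\mathbf 1_{T_j=v}\bigr],\quad v\in\Gamma_1(N).
\]
Combined with Theorem~\ref{th-flowdescr} (which says $\pi_2(U_1\cdots U_n)=e_*$ iff the flow $\mathfrak f_{U_1\cdots U_n}$ on the Cayley graph of $\Gamma_1(N)$ vanishes identically), the proposition reduces to the deterministic identity of events
\[
\{\mathfrak f_{U_1\cdots U_n}\equiv 0\}=\{T_n=\bar e\text{ and }L_i\equiv 0\text{ for every }i\}.
\]

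To prove this equivalence I would fix a color $i$ and decompose $\Gamma_1(N)$ into the orbits $C=T\langle\bar s_i\rangle$ of right multiplication by $\langle\bar s_i\rangle$. Because $\bar s_i$ is non-torsion, fixing a representative identifies each such orbit bijectively with $\mathbb Z$ and the color-$i$ edges of $C$ with the standard edges of $\mathbb Z$; a color-$i$ move taking place in $C$ becomes an oriented segment from $u_l$ to $v_l=u_l+m_{\beta_l}$ in $\mathbb Z$. Counting per-edge crossings gives
\[
F(k):=\mathfrak f_{U_1\cdots U_n}\!\text{ on the color-$i$ edge at level }k=\sum_l\bigl[\mathbf 1_{u_l\le k<v_l}-\mathbf 1_{v_l\le k<u_l}\bigr],
\]
and a short computation yields the telescoping identity $F(k)-F(k-1)=\sum_l[\mathbf 1_{u_l=k}-\mathbf 1_{v_l=k}]$, whose right-hand side is exactly $L_i$ evaluated at the vertex of level $k$. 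Since $F$ has finite support in $k$, this forces $F\equiv 0$ on $C$ iff $L_i\equiv 0$ on $C$. Summing over orbits and over colors gives $\mathfrak f\equiv 0\Leftrightarrow L_i\equiv 0$ for every $i$, and either side automatically forces $T_n=\bar e$ (flow zero equates source and sink, while $L_i\equiv 0$ implies flow zero by what was just proved).

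The main subtlety is the direction $L_i\equiv 0\Rightarrow\mathfrak f\equiv 0$: the lamp is a vertex quantity whereas the flow is an edge quantity, and a single step $\mathbf s_i^m$ with $|m|>1$ traverses $|m|$ edges while contributing to the lamp at only two vertices. The orbit-by-orbit telescoping trick resolves this by exploiting that every color-$i$ move is confined to a single $\langle\bar s_i\rangle$-orbit and that each such orbit inherits the linear structure of $\mathbb Z$ thanks to the non-torsion hypothesis on $\bar s_i$. Everything else is routine bookkeeping with wreath-product multiplications and the inductive description of $W_n$.
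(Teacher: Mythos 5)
Your argument is correct and is essentially the paper's proof in slightly more explicit clothing: the identity you obtain by telescoping along each $\langle\bar s_i\rangle$-orbit, $F(k)-F(k-1)=L_i$, is exactly the paper's relation $f_n^i(x)=\partial_i\mathfrak f_{Z_n}(x)$ from display (\ref{f=f}), and the finishing step (finite support on an infinite line forces $F\equiv 0$ once its increments vanish) is the same use of the non-torsion hypothesis. The only cosmetic difference is how you dispose of the base condition $T_n=\bar e$ — you deduce it from $\mathfrak f\equiv 0$ having no source/sink, whereas the paper notes directly that $T_n\neq\bar e$ would force $f_n(T_n)\neq 0$ by a parity count — but both are routine and the core idea is identical.
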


\begin{rem}
It's important here that the probability measure $\mu $ is supported 
on powers of generators, so that
each step is taken along one dimensional subgraphs $g\left\langle \overline{s%
}_{i}\right\rangle .$ The statement is not true for arbitrary measure on $%
\mathbf{F}/N^{\prime }$.
\end{rem}

\begin{proof}
The random walk $W_{n}$ on $%
\mathbb{Z}
^{r}\wr (\mathbf{F}/N)$ driven by $\phi $ can be written as 
\[
W_{n}=(f_{n},T_{n})=((f_{n}^{1},...,f_{n}^{r}),T_{n}). 
\]%
By definition of $W_n$, for any $x\in \Gamma_1(N)$, $f^i_n(x)$ counts the 
algebraic sums of the $i$-arrivals and $i$-departures of the random walk $T_n$  
at $x$ where by $i$-arrival (resp. $i$-departure) at $x$,
we mean a time $\ell$  at which $T_{\ell}=x$ and $U_{\ell}\in \langle 
\mathbf s_i \rangle$ (resp. $U_{\ell+1}\in \langle 
\mathbf s_i \rangle$). The condition $T_n=x\neq \bar{e}$ implies that the vector $f_n(x)$ 
must have at least one non-zero component because the total number of arrivals 
and departures at $x$ must be odd. Hence, we have
$$\phi^{\star n}(e_\star)=\mathbf P( (f_n,T_n)=e_\star)= 
\mathbf P(f^i_n(x)=0,\, 1\le i\le r,\, x\in \Gamma_1(N)).$$ 
We also have%
\[
\mu ^{\ast n}(e_\ast)=\mathbf P(\mathfrak{f}_{Z_{n}}(x,x\overline{s}_{i},\mathbf s_i)=0,
\,1\leq i\leq r,\,x\in \Gamma_1(N)). 
\]%
Given a flow $\mathfrak f$ on $\Gamma_1(N)$ (i.e., a function the edge set 
$\mathfrak E=\{(x,x\bar{s}_i,\mathbf s_i),x\in \Gamma_1(N),1\le i\le r\}\subset \Gamma_1(N)\times \Gamma_1(N)\times S$, for each $i, 1\le i\le r$, 
introduce the $i$-partial total flow $\partial_i \mathfrak f (x)$ at 
$x\in \Gamma_1(N)$ by setting 
$$\partial_i \mathfrak f (x)= \mathfrak 
f((x,x\bar{s}_i,\mathbf s_i))-\mathfrak f(x\bar{s}_i^{-1},x,\mathbf s_i).$$
It is easy to check (e.g., by induction on $n$) that
\begin{equation}\label{f=f}
\forall\,x\in \Gamma_1(N), \;\; f^i_n(x)=\partial_i\mathfrak f_{Z_n}(x).
\end{equation}
Obviously, $\mathfrak f_{Z_n}\equiv 0$ implies $f_n^i\equiv 0$ 
for all $1\le i\le r$ so 
$$\phi^{\star n}(e_\star)\ge \mu^{*n}(e_*).$$
But, in fact, under the assumption that none of the $\overline{s}_{i}$ 
are torsion elements in $\Gamma_1(N)$,
each edge $(x,x\overline{s}_{i},\mathbf s_i)$ 
in the Cayley graph of $\Gamma_1(N)$ is contained in the one
dimensional infinite linear subgraph $\{x\overline{s}_{i}^k: k\in \mathbb Z\}$
and, since $f_n$ and $\mathfrak f_{Z_n}$ are finitely supported, the equation
(\ref{f=f}) shows that $f_n^i\equiv 0$ implies that 
$\mathfrak f_{Z_n}(x,x\bar{s}_i,\mathbf s_i)=0$ for all $x\in \Gamma_1(N)$.
In particular, if $f_n^i\equiv 0$ for all $1\le i\le r$ then we must have 
$\mathfrak f_{Z_n}\equiv 0$.  Hence, if none of the $\bar{s}_i$ 
is a torsion element in $\Gamma_1(N)$, we have 
$f_n^i\equiv 0$, $1\le i\le r \Longleftrightarrow \mathfrak f_{Z_n}\equiv 0$
and thus $\mu^{*n}(e_{*})=\phi^{\star n}(e_\star)$. 
\if
On the other hand, recall that we use $f_{n}$ to denote lamp configuration
of $W_{n},$ then lamp configuration $f_{n}^{i}$ in coordinate $e_{i}$
records starting point and end point of each step taken in $\left\langle 
\overline{s}_{i}\right\rangle .$ Indeed, note that every time the random
walk moves along $g\cdot \left\langle \overline{s}_{i}\right\rangle ,$ the
lamp configuration move up $1$ in $i$-th coordinate at the starting point
(source) and move down $-1$ at the end point (sink). Therefore $%
\{f_{n}^{i}(x)=0$ for all $x\in g\cdot \left\langle \overline{s}%
_{i}\right\rangle \}$ is same as the event that the flow induced by $u_{n}$
along $g\cdot \left\langle \overline{s}_{i}\right\rangle $ is without
sources and sinks. Then by the Lemma \ref{circulation}, we have that for any 
$g\in \mathbf{F}/N,$ 
\begin{eqnarray*}
\{f_{n}^{i}(x) &=&0\text{ for all }x\in g\cdot \left\langle \overline{s}%
_{i}\right\rangle \} \\
&=&\{\text{restriction }\mathfrak{f}_{Z_{n}}\left\vert _{g\left\langle 
\overline{s}_{i}\right\rangle }\right. \text{ is a circulation}\} \\
&=&\{\mathfrak{f}_{Z_{n}}(x,x\overline{s}_{i})=0\text{ for all }x\in g\cdot
\left\langle \overline{s}_{i}\right\rangle \}.
\end{eqnarray*}%
Taking intersection over all edges $(g,g\overline{s}_{i}),$ we have 
\begin{eqnarray*}
\{f_{n}(x) &=&0\text{ for all }x\in \mathbf{F}/N\} \\
&=&\{\mathfrak{f}_{Z_{n}}(x,x\overline{s}_{i})=0\text{ for all }x\in \mathbf{%
F}/N,1\leq i\leq r\}.
\end{eqnarray*}%
Therefore 
\[
\phi ^{\star n}(\mathbf{e})=\mu ^{\ast n}(e). 
\]\fi
\end{proof}

In general, the probability measure $\phi$ on $\mathbb Z^r\wr \Gamma_1(N)$ 
does not have generating support because of the very specific and limited
nature of the lamp moves and how they correlate to the base moves. To fix this problem, let $\eta_r$ be the probability measure of the lazy random walk on 
$\mathbb Z^r$ so that $\eta_r(0)=1/2$ and 
$\eta_r(\pm \epsilon_i)=1/(4r)$, $1\le i\le r$. With this notation, let
\begin{equation}\label{defnusws}
q= \eta_r\star \bar{\mu}\star \eta_r
\end{equation}
be the probability measure of the switch-walk-switch random walk on the wreath product $\mathbb Z^r\wr \Gamma_1(N)$ associated with the walk-measure $\bar{\mu}$ on the base-group $\Gamma_1(N)$ and the switch-measure $\eta_r$ on the 
lamp-group $\mathbb Z^r$. See \cite{Pittet2002,SCZ-dv1} and Section \ref{sub-wr} 
for further details.

\begin{pro}
\label{LowerExpression}
Fix a measure $\boldsymbol{\mu}$ on $\mathbf F_r$ 
of the form {\em (\ref{pushforward})}.
Suppose that none of the $\overline{s}_{i}$ are torsion elements in $%
\Gamma_1(N)=\mathbf{F}_r/N.$ Refering to the notation introduced above,
there are $c,N\in (0,\infty)$ such that
the  probability measure  $\mu$ on $\Gamma_2(N)$ 
defined by {\em (\ref{defmu})} and the measure $q$ on 
$\mathbb Z^r\wr \Gamma_1(N)$  defined at {\em (\ref{defnusws})} satisfy
\[
\mu ^{\ast 2n}(e_*)\ge c q^{\star 2Nn}(e_\star). 
\]
\end{pro}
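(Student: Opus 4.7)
The plan is to reduce the desired lower bound to a Dirichlet form comparison between $\phi$ and $q$ on $\mathbb Z^r \wr \Gamma_1(N)$. By Proposition~\ref{Lower}, $\mu^{\ast n}(e_\ast) = \phi^{\star n}(e_\star)$, so the task becomes showing that $\phi^{\star 2n}(e_\star) \geq c\, q^{\star 2Nn}(e_\star)$ for some fixed constants $c, N > 0$.

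The key ingredient is a pointwise inequality $q \geq c_0 \phi$ on $\mathbb Z^r \wr \Gamma_1(N)$ for a constant $c_0 > 0$ depending only on $r$. To verify this, I would expand $q = \eta_r \star \bar\mu \star \eta_r$ as a sum over all decompositions $g = u v w$ with $\eta_r(u)\bar\mu(v)\eta_r(w)>0$, and restrict attention to the \emph{diagonal} triples $u = (\delta^i, 0)$, $v = (0, \bar s_i^m)$, $w = (-\delta^i, 0)$ in which the two lamp switches and the walk step all use the same coordinate $i$. Each such diagonal triple contributes $\eta_r(u)\bar\mu(v)\eta_r(w) = \frac{1}{(4r)^2}\cdot\frac{p_i(m)}{r}$ to $q(g)$, and summing over $(i,m)$ representations of $g$ in the support of $\phi$ gives $q(g) \geq (16 r^2)^{-1}\phi(g)$. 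Since Dirichlet forms are monotone in the step distribution, this pointwise bound transfers to $\mathcal E_\phi \leq 16 r^2\, \mathcal E_q$ on $\ell^2(\mathbb Z^r\wr\Gamma_1(N))$.

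The final step is to invoke the standard comparison machinery for return probabilities on countable groups: a Dirichlet form domination $\mathcal E_\phi \leq A\, \mathcal E_q$ between symmetric probability measures, combined with the easily verified laziness hypothesis $\phi(e_\star), q(e_\star) > 0$, yields constants $c, N > 0$ with $\phi^{\star 2n}(e_\star)\geq c\, q^{\star 2Nn}(e_\star)$; see \cite{Pittet2000} and the analytic machinery developed in \cite{VSCC}. The principal technical point to be dealt with is that the support of $\phi$ generates only the Magnus image $\bar\psi(\Gamma_2(N))$, which in general is a proper subgroup of the ambient wreath product, so the classical form of the comparison theorem does not apply directly. I plan to circumvent this by applying the comparison to the regularized measure $\tilde\phi := (1-\varepsilon)\phi + \varepsilon q$ for a fixed small $\varepsilon>0$, which inherits the generating support of $q$ and sandwiches $\phi$ within harmless multiplicative constants via the simultaneous bounds $\tilde\phi \leq (16 r^2 + \varepsilon)\, q$ and $\phi \leq (1-\varepsilon)^{-1}\tilde\phi$, so that the resulting return probability estimate for $\tilde\phi$ transfers to $\phi$ up to an explicit multiplicative constant.
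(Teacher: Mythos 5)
Your core argument matches the paper's proof: you reduce to the pair $(\phi,q)$ via Proposition~\ref{Lower}, establish a pointwise bound $q\ge c_0\phi$ on $\mathbb Z^r\wr\Gamma_1(N)$ by isolating the ``diagonal'' triples in $\eta_r\star\bar\mu\star\eta_r$, pass to a Dirichlet-form comparison $\mathcal{E}_\phi\le c_0^{-1}\mathcal{E}_q$, and invoke the $L^2$-comparison theorem of \cite{Pittet2000}. (The paper records the constant as $(2r)^2$ instead of your $16r^2$; neither is optimized and it is immaterial.)

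The caveat you append at the end, however, is off-target. \cite[Theorem~2.3]{Pittet2000} is a quadratic-form/spectral comparison on $\ell^2(G)$ valid for arbitrary symmetric probability measures; it does not require the measure with the smaller Dirichlet form to have generating support in the ambient group. The only extra condition to verify is laziness, $\phi(e_\star)=\frac{1}{r}\sum_i p_i(0)>0$, which holds here. Indeed, the paper applies the very same theorem in the proof of Theorem~\ref{theo-Upper} to the measure $\nu\ast\varphi\ast\nu$, which is supported on the proper subgroup $\langle\Gamma,\rho\rangle<\Gamma_2(N)$. Moreover, your regularization trick is circular as written: granting the return-probability comparison for the pair $(\tilde\phi,q)$, you must still transfer the lower bound from $\tilde\phi$ back to $\phi$. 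The pointwise inequality $\phi\le(1-\varepsilon)^{-1}\tilde\phi$ does not give $\phi^{\star 2n}(e_\star)\gtrsim\tilde\phi^{\star 2Nn}(e_\star)$; the only mechanism that would is another Dirichlet-form comparison, now for the pair $(\phi,\tilde\phi)$, where $\phi$ has exactly the non-generating support you set out to avoid. You should simply drop the regularization; the direct argument is already complete and is what the paper does.
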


\begin{proof}
On a group $G$, the Dirichlet form associated with a symmetric measure $p$ is
 defined by $$\mathcal E_p(f,f)=\frac{1}{2}\sum_{x,y\in G}|f(xy)-f(x)|^2p(y).$$
From the definition, it easily follows that  $
\mathbb{Z}
^{r}\wr \Gamma_1(N),$ we have the  comparison of Dirichlet forms%
\[
\mathcal{E}_{\phi}\leq (2r)^{2}\mathcal{E}_{\eta _{r}\star \overline{%
\mu }\star \eta _{r}}= (2r)^{2}\mathcal{E}_{q} . 
\]%
Therefore, by \cite[Theorem 2.3]{Pittet2000}, 
\[
\phi^{\star 2n}(e_\star)\ge c q^{\star 2Nn}(e_\star). 
\]%
From Proposition \ref{Lower} we conclude that 
\[
\mu ^{\ast 2n}(e_*)=\phi^{\star 2n}(e_\star)\ge c q^{\star 2Nn}(e_\star). 
\]
\end{proof}

\begin{cor}\label{cor-low} Fix $a=(\alpha_1,\dots,\alpha_r)\in (0,2)^r$ and let
$\boldsymbol{\mu}_a$ be defined by {\em (\ref{pushforward})}
with $p_i(m)=c_i(1+|m|)^{-1-\alpha_i}$. Let $N=[\mathbf F_r,\mathbf F_r]$ so that
$\Gamma_1(N)=\mathbb Z^r$ and $\Gamma_2(N)=\mathbf S_{2,r}$. Let
$\mu_a$ be the probability measure on $\mathbf S_{2,r}$ associated 
to $\boldsymbol{\mu}_a$ by {\em (\ref{defmu})}.  Then we have
$$\mu_a^{*n}(e_*)\ge 
\exp\left( - C n^{r/(r+\alpha)}[\log n]^{\alpha/(r+\alpha)}\right)$$
where
$$\frac{1}{\alpha}=\frac{1}{r}\left(\frac{1}{\alpha_1}+\dots+\frac{1}{\alpha_r}\right).$$
\end{cor}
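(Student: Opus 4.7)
The plan is to invoke Proposition \ref{LowerExpression} to reduce the statement to a lower bound for the switch-walk-switch random walk on the wreath product $\mathbb{Z}^r\wr\mathbb{Z}^r$. Since the generators of $\Gamma_1([\mathbf{F}_r,\mathbf{F}_r])=\mathbb{Z}^r$ are non-torsion, Proposition \ref{LowerExpression} yields $\mu_a^{*2n}(e_*)\ge c\, q^{\star 2Nn}(e_\star)$, where $q=\eta_r\star\bar{\mu}_a\star\eta_r$, so it is enough to prove
\[
q^{\star m}(e_\star)\ge\exp\bigl(-C m^{r/(r+\alpha)}(\log m)^{\alpha/(r+\alpha)}\bigr).
\]

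To establish this, I would apply the standard variational lower bound $q^{\star 2m}(e_\star)\ge|\Omega|^{-1}\exp(-4m\lambda_1(q,\Omega))$ with a finite set $\Omega$ adapted to the anisotropic heavy-tailed structure of $\bar\mu_a$. For parameters $R,M\ge 1$ to be optimized, set $R_i=\lfloor R^{1/\alpha_i}\rfloor$ and $B_R=\prod_{i=1}^r[-R_i,R_i]$, so that $|B_R|\simeq R^{r/\alpha}$; then take
\[
\Omega_{R,M}=\{(f,x)\in\mathbb{Z}^r\wr\mathbb{Z}^r:x\in B_R,\ \mathrm{supp}(f)\subset B_R,\ \|f\|_\infty\le M\},
\]
so that $\log|\Omega_{R,M}|\simeq R^{r/\alpha}\log M$. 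Using the switch-walk-switch structure of $q$, the eigenvalue splits into a walk contribution $\lambda_1(\bar\mu_a,B_R)\le C/R$, obtained by estimating the boundary cost of $B_R$ in each coordinate direction via the tails $p_i(k)\simeq(1+|k|)^{-1-\alpha_i}$ together with the equalization $R_i^{\alpha_i}\simeq R$, and a lamp contribution of order $1/M^2$ coming from the lazy walk $\eta_r$ on $[-M,M]^r$. Choosing $M^2\simeq R$ balances these pieces and produces
\[
-\log q^{\star 2m}(e_\star)\lesssim R^{r/\alpha}\log R+m/R.
\]
Minimizing the right-hand side over $R$ gives $R\simeq(m/\log m)^{\alpha/(r+\alpha)}$, and each of the two terms then becomes of order $m^{r/(r+\alpha)}(\log m)^{\alpha/(r+\alpha)}$, yielding the desired exponent.

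The main obstacle is the walk eigenvalue estimate $\lambda_1(\bar\mu_a,B_R)\le C/R$. The F\o lner-couple machinery of Section \ref{sec-low1} relies on word-distance arguments that are ill suited to the infinite-range heavy-tailed measure $\bar\mu_a$, so one must estimate $\mathcal{E}_{\bar\mu_a}(\mathbf{1}_{B_R},\mathbf{1}_{B_R})$ directly. The key computation is that, for each coordinate direction $i$,
\[
\sum_{x\in B_R}\sum_{k:\,x+k\bar s_i\notin B_R}\bar\mu_a(\bar s_i^k)\lesssim |B_R|/R,
\]
where the anisotropic equalization $R_i^{\alpha_i}\simeq R$ is precisely what makes every direction contribute the same order and prevents any single coordinate from dominating. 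A companion technical point is the Dirichlet form comparison $\mathcal{E}_q\asymp\mathcal{E}_{\eta_r}+\mathcal{E}_{\bar\mu_a}$ needed to split the total eigenvalue into walk and lamp parts; this is a standard facet of the analysis of switch-walk-switch measures but must be tracked carefully.
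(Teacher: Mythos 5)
Your first step---invoking Proposition~\ref{LowerExpression} to reduce the claim to a lower bound for the switch-walk-switch walk $q=\eta_r\star\bar\mu_a\star\eta_r$ on $\mathbb Z^r\wr\mathbb Z^r$---is exactly what the paper does. From that point the paper simply cites the companion reference \cite{SCZ-dv1} for the two-sided estimate $q^{\star 2n}(e_\star)\simeq \exp(-n^{r/(r+\alpha)}(\log n)^{\alpha/(r+\alpha)})$; you instead attempt to reprove the lower half of that estimate by a direct variational argument. Your choice of anisotropic test domain $B_R=\prod_i[-R_i,R_i]$ with $R_i\simeq R^{1/\alpha_i}$, the balancing $M^2\simeq R$, and the final optimization $R\simeq(m/\log m)^{\alpha/(r+\alpha)}$ do produce the right exponents, so the overall scaling analysis is sound.

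There is, however, a genuine gap in the crucial eigenvalue step. You propose to bound $\lambda_1(\bar\mu_a,B_R)$ by testing against the indicator $\mathbf 1_{B_R}$, claiming the directional boundary cost satisfies $\sum_{x\in B_R}\sum_{k:\,x+k\bar s_i\notin B_R}\bar\mu_a(\bar s_i^k)\lesssim |B_R|/R$. This is false once some $\alpha_i\ge 1$. That boundary sum in direction $i$ is governed by $\sum_{y\in[-R_i,R_i]}\mathbf P(y+Z_i\notin[-R_i,R_i])\simeq \sum_{d=1}^{R_i}d^{-\alpha_i}$, which is $\simeq R_i^{1-\alpha_i}$ for $\alpha_i<1$, but only $\Theta(\log R_i)$ at $\alpha_i=1$ and $\Theta(1)$ for $\alpha_i>1$. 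Multiplying by $|B_R|/R_i$ gives a boundary cost $\simeq |B_R|R^{-1/\alpha_i}$ in the regime $\alpha_i>1$, which dominates $|B_R|/R$; the resulting eigenvalue bound is only $\lambda_1\lesssim R^{-1/\max_i\alpha_i}$, and since the $\alpha_i$ range over $(0,2)$ this destroys the final exponent. The remedy is to test against a ramp rather than the hard indicator: with $f(x)=\prod_i(1-|x_i|/R_i)_+$ one gets $\|f\|^2\simeq |B_R|$ while each directional Dirichlet contribution is $\simeq \prod_{j\ne i}R_j\cdot R_i^{1-\alpha_i}=|B_R|/R$ for \emph{every} $\alpha_i\in(0,2)$ (split $|k|\le R_i$ and $|k|>R_i$), yielding $\lambda_1(\bar\mu_a,B_R)\le C/R$ uniformly. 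This is the step your sketch needs to repair; the rest of your optimization then goes through. Separately, the comparison you write as $\mathcal E_q\asymp\mathcal E_{\eta_r}+\mathcal E_{\bar\mu_a}$ is not literally meaningful (these Dirichlet forms live on different groups) and needs to be phrased as a bound on $\lambda_1(q,\Omega_{R,M})$ obtained from a product test function, but this is a minor packaging issue compared to the indicator-function problem.
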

\begin{rem}Later we will prove a matching upper bound.
\end{rem}
\begin{proof} Proposition \ref{LowerExpression} yields 
$$\mu_a^{*n}(e_*)\ge 
c [\eta_r\star \bar{\mu}_a\star \eta_r]^{\star n}(e_\star)$$ 
where the probability $\bar{\mu}_a$ on $\Gamma_1(N)=\mathbb Z^r$ 
is defined at (\ref{defmu}) and is
given explicitely by 
$$\bar{\mu}_a(g)=\frac{1}{r}\sum_1^r p_i(m)\mathbf 1_{\bar{s_i}^n}(g)$$
where $\bar{s_i}$ canonical generators of $\mathbb Z^r$ and we have retained 
the multiplicative notation so that 
$\bar{s}_1^n=(n,0,\dots,0),\dots, \bar{s}_r^n=(0,\dots,0,n)$.

The behavior of the random walk on the wreath product 
$\mathbb Z^r\wr \mathbb Z^r$ associated with the switch-walk-switch measure 
$q=\eta_r\star \bar{\mu}_a\star \eta_r$ is studied in \cite{SCZ-dv1} 
where it is proved that
$$q^{\star 2n}(e_\star)\simeq 
\exp\left( -  n^{r/(r+\alpha)}[\log n]^{\alpha/(r+\alpha)}\right).$$
Corollary \ref{cor-low} follows. \end{proof}

\section{Return probability upper bounds}\setcounter{equation}{0}

This section explaines how to use the Magnus embedding 
(defined at (\ref{defpsi}))$$
\bar{\psi}: \mathbf F_r/[N,N]=\Gamma_2(N) \hookrightarrow 
\mathbb Z^r\wr \Gamma_1(N),\;\;\Gamma_1(N)=\mathbf F_r/N,$$
to produce, in certain cases, 
an upper bound on the probability of return $\mu^{*2n} (e_*)$ on 
$\Gamma_2(N)$.  Recall from (\ref{defbara}) that the Magnus embedding 
$\psi$ is described more concretely by
\begin{eqnarray*}
\Gamma_2(N) &\hookrightarrow &%
\mathbb{Z}
^{r}\wr \Gamma_1(N) \\
g &\mapsto & \bar{\psi}(g)=(\bar{a}(g),\overline{g}),\;\;\overline{g}=\bar{\pi}(g).
\end{eqnarray*} 
Here $\bar{a}(g)$ is an element of $\sum_{x\in \Gamma_1(N)} \mathbb Z^r_x$, 
equivalently, a $\mathbb Z^r$-valued function with finite support
defined on $\Gamma_1(N)$, equivalently, an element of the $\mathbb Z(\Gamma_1(N))$-module $\mathbb Z^r(\Gamma_1(N))$ . In any group $G$, we let $\tau_gx=gx$ be the 
translation by $g\in G$ on the left as well as its extension to any 
$\mathbb Z(G)$ module.  We will need the following lemma.

\begin{lem} \label{lem-alg-a}
For any $g,h\in \Gamma_2(N)$ 
with $\bar{g}=\bar{\pi}(g)\in \Gamma_1(N)$, we have
$$\bar{a}(gh)= \bar{a}(g)+ \tau_{\bar{g}}\bar{a}(h).$$
In particular, if $g\in \Gamma_2(N)$ and $\boldsymbol \rho\in N$ with 
$\rho=\pi_2(\boldsymbol \rho)\in \Gamma_2(N)$, 
we have
$$\bar{a}(g\rho g^{-1})=\tau_{\bar{g}} \bar{a}(\rho).$$
\end{lem}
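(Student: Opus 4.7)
The plan is to read off both identities directly from the matrix form of the Magnus embedding. Since $\bar{\psi}$ is a homomorphism, we have $\bar{\psi}(gh)=\bar{\psi}(g)\bar{\psi}(h)$, so first I would simply multiply the $2\times 2$ matrices
\[
\begin{pmatrix} \bar{\pi}(g) & \bar{a}(g) \\ 0 & 1 \end{pmatrix}
\begin{pmatrix} \bar{\pi}(h) & \bar{a}(h) \\ 0 & 1 \end{pmatrix}
=
\begin{pmatrix} \bar{\pi}(g)\bar{\pi}(h) & \bar{a}(g)+\bar{\pi}(g)\cdot \bar{a}(h) \\ 0 & 1 \end{pmatrix}
\]
and compare the $(1,2)$-entry with that of $\bar{\psi}(gh)$. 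The only point to check is that the left action of $\bar{g}=\bar{\pi}(g)$ on the module $\Omega\cong \sum_{x\in\Gamma_1(N)}(\mathbb Z^r)_x$ coincides, under the identification described in the ``flow/wreath product'' remark, with the translation operator $\tau_{\bar g}$ on $\mathbb Z^r$-valued functions on $\Gamma_1(N)$; this is immediate from the definition of the module structure of $\Omega$. This gives the first identity $\bar{a}(gh)=\bar{a}(g)+\tau_{\bar g}\bar{a}(h)$.

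For the second identity I would iterate the cocycle relation. Writing $g\rho g^{-1}=(g\rho)g^{-1}$ and applying the first identity twice yields
\[
\bar{a}(g\rho g^{-1})=\bar{a}(g)+\tau_{\bar g}\bar{a}(\rho)+\tau_{\bar g}\tau_{\bar\rho}\bar{a}(g^{-1}).
\]
Because $\boldsymbol\rho\in N$, we have $\bar\rho=\bar\pi(\rho)=\bar e$, so $\tau_{\bar\rho}$ is the identity. To finish, I would apply the cocycle relation to the trivial product $gg^{-1}=e_{*}$; since $\bar{\psi}(e_*)$ is the identity matrix, $\bar{a}(e_*)=0$, and therefore $\tau_{\bar g}\bar{a}(g^{-1})=-\bar{a}(g)$. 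Substituting above gives $\bar{a}(g\rho g^{-1})=\tau_{\bar g}\bar{a}(\rho)$, as claimed.

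There is no real obstacle here: the lemma is essentially the cocycle identity for the $(1,2)$-entry of an upper-triangular matrix representation, plus the observation that elements of $N$ project to $\bar e$ in $\Gamma_1(N)$. The only thing one must be careful about is notation, namely consistently identifying the $\mathbb{Z}(\Gamma_1(N))$-module action on $\Omega$ with the translation action on lamp configurations in $\mathbb{Z}^r\wr\Gamma_1(N)$, which is the identification already fixed in Section \ref{sec-Magnus}.
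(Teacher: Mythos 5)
Your proof is correct and takes essentially the same route as the paper: the first identity is exactly the $(1,2)$-entry comparison in the $2\times 2$ matrix form of the Magnus embedding (the paper simply says ``by inspection''), and your derivation of the second identity — applying the cocycle relation twice, using $\bar\rho=\bar e$, and the relation $\tau_{\bar g}\bar a(g^{-1})=-\bar a(g)$ from $\bar a(e_*)=0$ — is just a spelled-out version of the paper's one-line remark that the second formula is an easy consequence of the first together with $\pi(\boldsymbol\rho)=\bar e$.
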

\begin{proof} The first formula follows from the Magnus embedding by inspection.
The second formula is an easy consequence of the first and the fact that 
$\pi(\boldsymbol \rho)$ is the identity element in $\Gamma_1(N)$.
\end{proof} 
\begin{rem}\label{rem-alg-a} The identities stated in Lemma \ref{lem-alg-a}
can be equivalently written in terms of flows on $\Gamma_1(N)$. Namely, 
for $\mathbf u,\mathbf v \in \mathbf F_r$, we have
$$\mathfrak f_{\mathbf u \mathbf v}= \mathfrak f_{\mathbf u} +
\tau_{\pi(\mathbf u)}\mathfrak f_{\mathbf v}
\mbox{ and }\;\;
\mathfrak f_{\mathbf u \mathbf v \mathbf u^{-1}}=
\tau_{\pi(\mathbf u)}\mathfrak f_{\mathbf v}.$$ 
\end{rem}

\subsection{Exclusive pairs} \label{sec-exclu}

\begin{defin}\label{def-exclu}
Let $\Gamma$ be a subgroup of $\Gamma_2(N)$ and $\boldsymbol \rho$
be a reduced word in $N\setminus [N,N] \subset \mathbf F_r$.  Let $\overline{\Gamma}=\bar{\pi}(\Gamma)$. Set $\rho=\pi_2(\boldsymbol \rho)\in \Gamma_2(N)$. 
We say the 
pair $(\Gamma,\boldsymbol \rho)$ is {\em exclusive} if the following 
two conditions are satisfied:
\begin{description}
\item[(i)] The collection $\{\tau _{\overline{g}}(\bar{a}(\rho ))\}_{\overline{g}%
\in \overline{\Gamma }}$ is $%
\mathbb{Z}
$-independent in the $\mathbb Z$-module $\sum_{\Gamma_1(N)}(
\mathbb{Z}
^{r})_{x}$.

\item[(ii)] In the $\mathbb Z$-module $\sum_{\Gamma_1(N)}(%
\mathbb{Z}
^{r})_{x},$ the $\mathbb Z$-submodule generated by 
$\{\tau _{\overline{g}}(\bar{a}(\rho
))\}_{\overline{g}\in \overline{\Gamma }}$, call it $A=A(\Gamma,\boldsymbol \rho)$, has trivial intersection with
the subset 
$B =\{\bar{a}(g):g\in \Gamma \}$, that is 
\[
A\cap B=\{\mathbf{0}\}. 
\]
\end{description}
\end{defin}
\begin{rem}\label{rem-wr-rho} Condition (i) implies that the 
$\mathbb Z$-submodule $A(\Gamma,\boldsymbol \rho)$ of $\sum_{\Gamma_1(N)}(\mathbb Z^r)_x$
is isomorphic to $\sum _{\bar{g}\in
\overline{\Gamma}} (\mathbb Z)_{\bar{g}}$.
\end{rem}

\begin{exa}\label{meta-exclu}
In the free metabelian group $S_{2,r}=\mathbf{F}_r/[N,N]$, 
$N=[\mathbf F_r,\mathbf F_r]$, set
$\Gamma =\left\langle
s_{1}^{2},...,s_{r}^{2}\right\rangle,$ and 
$\boldsymbol{\rho }=[\mathbf{s}_{1},\mathbf{s}%
_{2}].$ Then $(\Gamma ,\boldsymbol{\rho })$ is an exclusive pair.
The conditions (i)--(ii) are easy to check because the monomials $%
\{Z_{1}^{x_{1}}Z_{2}^{x_{2}}...Z_{r}^{x_{r}}:x\in 
\mathbb{Z}
^{d}\}$ are $%
\mathbb{Z}
$-linear independent in $%
\mathbb{Z}
(%
\mathbb{Z}
^{r}).$ A similar idea was used in the proof of \cite[Theorem 3.2]{Erschler2004}.
\end{exa}

We now formulate a sufficient condition for  
a pair $(\Gamma ,\boldsymbol{\rho })$ to be exclusive. This sufficient condition
is phrased in terms of the representation of the elements of $\Gamma_2(N)$ 
as flows on $\Gamma_1(N)$. Recall that $\Gamma_1(N)$ come equipped 
with a marked Cayley graph structure as described in Section \ref{sec-flow}.


\begin{lem}\label{lem-exclu}
Fix $\Gamma <\Gamma_2(N)$ and $\boldsymbol{\rho}$ as in 
{\em Definition \ref{def-exclu}}. Set 
$$
U=\bigcup _{g\in \Gamma }\text{\em supp} (\mathfrak{f}_{g}), 
$$
that is the union of the support of the flows on $\Gamma_1(N)$ induced by
elements of $\Gamma $. Assume that 
 $\boldsymbol{\rho}=\mathbf{us}\mathbf{v}$ with $\mathbf s\in \{\mathbf s_1,\dots,\mathbf s_r\}$ and  that:
\begin{enumerate}
\item $\mathfrak{f}_{\boldsymbol{\rho }}((\overline{u},\overline{u}\overline{s}%
, \mathbf s))\neq 0$;
\item For all $x\in \overline{\Gamma }\backslash \{\overline{e}\},$ $%
\mathfrak{f}_{\boldsymbol{\rho }}((x\overline{u},x\overline{u}\overline{s},
\mathbf s))=0$;
\item The  edge $(\overline{u},\overline{u}%
\overline{s},\mathbf s))$ is not in $ U$.
\end{enumerate}
Then the pair $(\Gamma ,\boldsymbol{\rho})$ is exclusive.
\end{lem}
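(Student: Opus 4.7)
\textbf{Proof plan for Lemma \ref{lem-exclu}.}
The plan is to convert both conditions (i) and (ii) of Definition \ref{def-exclu} into statements about flows on the Cayley graph of $\Gamma_1(N)$, using Lemma \ref{FlowMagnus}, and then evaluate everything on one judiciously chosen edge. Recall that via $\sum_{i}\pi(\partial_{\mathbf s_i}\mathbf g)\lambda_{\mathbf s_i}$, the element $\bar a(g)$ is encoded by the flow $\mathfrak f_g$, and that translation $\tau_{\bar g}$ on the module $\sum_x(\mathbb Z^r)_x$ corresponds to the translation $\tau_{\bar g}\mathfrak f$ of flows (whose value on the edge $(x,x\bar s_i,\mathbf s_i)$ is $\mathfrak f((\bar g^{-1}x,\bar g^{-1}x\bar s_i,\mathbf s_i))$). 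In this language, hypothesis (1) says $\mathfrak f_{\boldsymbol\rho}$ is nonzero on the edge $\mathfrak e_0=(\bar u,\bar u\bar s,\mathbf s)$, hypothesis (2) says that for every $x\in\overline\Gamma\setminus\{\bar e\}$ the translated flow $\tau_{x^{-1}}\mathfrak f_{\boldsymbol\rho}$ vanishes on $\mathfrak e_0$, and hypothesis (3) says $\mathfrak e_0\notin U$.

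The first preliminary step is to show that $U$ is invariant under left translation by $\overline\Gamma$. Pick $\bar g_0\in\overline\Gamma$ and $g_0\in\Gamma$ with $\bar\pi(g_0)=\bar g_0$. For any $g\in\Gamma$, Lemma \ref{lem-alg-a} applied to the pair $(g_0^{-1},g)$ gives
\[
\tau_{\bar g_0^{-1}}\bar a(g)=\bar a(g_0^{-1}g)-\bar a(g_0^{-1}),
\]
so the support of $\tau_{\bar g_0^{-1}}\mathfrak f_g$ lies in $\mathrm{supp}(\mathfrak f_{g_0^{-1}g})\cup\mathrm{supp}(\mathfrak f_{g_0^{-1}})\subset U$. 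Hence $\bar g_0^{-1}U\subset U$, and, as $\bar g_0$ is arbitrary in $\overline\Gamma$, in fact $\bar g_0^{-1}U=U$. Combined with (3) this yields the stronger statement that $\bar g_0\mathfrak e_0\notin U$ for every $\bar g_0\in\overline\Gamma$.

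For condition (i), suppose $\sum_{\bar g\in\overline\Gamma}c_{\bar g}\tau_{\bar g}\bar a(\rho)=0$ and fix any $\bar g_0$ with $c_{\bar g_0}\ne 0$. Evaluating the corresponding flow identity on the edge $\bar g_0\mathfrak e_0$, the term indexed by $\bar g$ contributes $c_{\bar g}\,\mathfrak f_{\boldsymbol\rho}((\bar g^{-1}\bar g_0\bar u,\bar g^{-1}\bar g_0\bar u\bar s,\mathbf s))$. For $\bar g=\bar g_0$ this is $c_{\bar g_0}\mathfrak f_{\boldsymbol\rho}(\mathfrak e_0)\ne 0$ by (1), while for $\bar g\ne\bar g_0$ the element $\bar g^{-1}\bar g_0$ belongs to $\overline\Gamma\setminus\{\bar e\}$, and (2) forces the term to vanish. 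So the total value is $c_{\bar g_0}\mathfrak f_{\boldsymbol\rho}(\mathfrak e_0)\ne 0$, contradicting the assumption.

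For condition (ii), suppose $\sum c_{\bar g}\tau_{\bar g}\bar a(\rho)=\bar a(h)$ for some $h\in\Gamma$, with some $c_{\bar g_0}\ne 0$. Evaluating on the same edge $\bar g_0\mathfrak e_0$, the left-hand side equals $c_{\bar g_0}\mathfrak f_{\boldsymbol\rho}(\mathfrak e_0)\ne 0$ as in the previous paragraph. On the right-hand side, $\mathfrak f_h$ is supported on $U$, which by the preliminary step does not contain $\bar g_0\mathfrak e_0$; the value is therefore $0$, giving the contradiction. The main step where one has to be slightly careful is the $\overline\Gamma$-invariance of $U$, since $U$ is defined as a union of supports of flows of elements of $\Gamma$ rather than as an a priori $\overline\Gamma$-invariant set; once this is in place, hypotheses (1)--(3) plug directly into a single-edge evaluation argument.
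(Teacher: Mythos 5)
Your proof is correct and follows essentially the same strategy as the paper's: both reduce conditions (i) and (ii) of Definition \ref{def-exclu} to a single-edge evaluation, using hypothesis (2) to kill all but one term in the $\mathbb Z$-linear combination and hypothesis (1) to see that the surviving term is nonzero, and then using hypothesis (3) to show the purported element of $B$ cannot contribute. The one place where you diverge is in how condition (ii) is handled. The paper's proof keeps the fixed edge $(\bar u,\bar u\bar s,\mathbf s)$ and instead conjugates the relation, rewriting $\mathfrak f_{h}=\sum_i c_i\mathfrak f_{g_i\rho g_i^{-1}}$ as $\mathfrak f_{g_1^{-1}hg_1}=\sum_i c_i\mathfrak f_{g_1^{-1}g_i\rho g_i^{-1}g_1}$ and then noting $g_1^{-1}hg_1\in\Gamma$, so hypothesis (3) applies at the original edge. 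Your proof keeps $h$ fixed, instead translating the edge by $\bar g_0\in\overline\Gamma$ and establishing as a preliminary lemma that $U$ is invariant under left $\overline\Gamma$-translation (via $\tau_{\bar g_0^{-1}}\bar a(g)=\bar a(g_0^{-1}g)-\bar a(g_0^{-1})$). The two are equivalent, but your route sidesteps a small subtlety that the paper leaves implicit: the paper's step $\mathfrak f_{g_1^{-1}hg_1}=\tau_{\bar g_1^{-1}}\mathfrak f_h$ relies on $\bar\pi(h)=\bar e$ (otherwise an extra correction term appears, per Lemma \ref{lem-alg-a}); this holds because $\bar a(h)$ is a sum of circulations, so $\mathfrak f_h$ is a circulation and hence has trivial sink. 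Your $\overline\Gamma$-invariance of $U$ does not need that observation at all, which makes the verification a bit more self-contained, at the mild cost of an extra preliminary step.
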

\begin{rem} \label{rem-exclu}
The first assumption insures that the given edge is really active in 
the loop associated with $\mathbf \rho$ on the Cayley graph $\Gamma_1(N)$.
The proof given below shows that conditions 1-2 above imply condition (i) of Definition \ref{def-exclu}. 
All three assumptions above are used to obtain condition (ii)
of Definition \ref{def-exclu}. 
\end{rem}
\begin{proof}
Condition (i). Suppose there is a nontrivial linear relation such that 
\[
c_{1}\tau _{\overline{g}_{1}}(\bar{a}(\rho ))+...+c_{n}\tau _{\overline{g}%
_{n}}(\bar{a}(\rho ))=0, \;\;c_i\in \mathbb Z,\]%
where some  $c_{j}$, say $c_1$, is not zero and
the element $\overline{g}_{j}\in \overline{\Gamma}$ are  pairwise distinct. 
Let $\mathbf g_j$ be representative of $\bar{g}_j$ in $\mathbf F_r$.
Let $b$ denote the
coefficient of $\sum_{i=1}^{n}c_{i}\tau _{\overline{g}_{i}}(\bar{a}(\rho ))$ in
front of the term $\overline{g}_{1}\overline{u}\lambda _{\mathbf{s}}$. 
By formula (\ref{FlowCoefficient}),%
\[
b=\sum_{i=1}^{n}c_{i}\mathfrak{f}_{\mathbf g_{i}\boldsymbol \rho \mathbf 
g_{i}^{-1}}((\overline{g}_{1}%
\overline{u},\overline{g}_{1}\overline{u}\overline{s},\mathbf s)). 
\]%
Note that%
\[
\mathfrak{f}_{\mathbf g_{i}\boldsymbol \rho \mathbf g_{i}^{-1}}(
(\overline{g}_{1}\overline{u},\overline{g}
_{1}\overline{u}\overline{s},\mathbf s))=\mathfrak{f}_{\boldsymbol
\rho }((\overline{g}%
_{i}^{-1}\overline{g}_{1}\overline{u},\overline{g}_{i}^{-1}\overline{g}_{1}%
\overline{u}\overline{s},\mathbf s)). 
\]%
Therefore, 
since $\overline{g}_{i}^{-1}\overline{g}_{1}\neq 
\overline{e}$ for all $i\neq 1,$ the hypothesis stated in 
Lemma \ref{lem-exclu}(2) 
gives
 $$\forall\, i\neq 1,\;\;\mathfrak{f}_{\mathbf g_{i}\boldsymbol \rho \mathbf g_{i}^{-1}}
((\overline{g}_{1}%
\overline{u},\overline{g}_{1}\overline{u}\overline{s}, \mathbf s))=0.$$ By 
hypothesis (1) of  Lemma \ref{lem-exclu}, this implies 
\[
b=c_{1}\mathfrak{f}_{\boldsymbol{\rho }}((\overline{u},\overline{u}\overline{s},
\mathbf s))\neq 0
\]%
which provides a contradiction. 

We now verify that  Condition  (ii) of Definition \ref{def-exclu} holds.
Fix $x\in A\cap B$ and assume that  $x$ is nontrivial. From Condition  (i), 
$x$ can be written uniquely as 
$$x=c_{1}\tau _{\overline{g}_{1}}a(\rho
)+...+c_{n}\tau _{\overline{g}_{n}}a(\rho ),$$ where  $c_{j}\in 
\mathbb{Z}
\backslash \{0\}$ and the elements  $\overline{g}_{j}$ are pairwise distinct. 
On the other hand, since $%
x\in B,$ there exists some $h\in \Gamma $ such that $x=\bar{a}(h)$. 
By formula (\ref{FlowCoefficient}), 
$\bar{a}(h)=\sum_{i=1}^{n}c_{i}\tau _{\overline{g}_{i}}a(\rho
)$ is equivalent to 
\[
\mathfrak{f}_{h}=\sum_{i=1}^{n}c_{i}\mathfrak{f}_{g_{i}\rho g_{i}^{-1}}.
\]%
Therefore $\mathfrak{f}_{g_{1}^{-1}hg_{1}}=\sum_{i=1}^{n}c_{i}\mathfrak{f}%
_{g_{1}^{-1}g_{i}\rho g_{i}^{-1}g_{1}}$. By hypothesis (2), it follows that
$$\mathfrak{f}%
_{g_{1}^{-1}hg_{1}}((\overline{u},\overline{u}\overline{s},\mathbf s))
=c_{1}\mathfrak f _{\rho}((\overline{u},\overline{u}\overline{s},\mathbf s))\neq 0.$$
However, since $g_{1}^{-1}hg_{1}\in \Gamma $, this implies that
$(\overline{u},\overline{u}\overline{s},\mathbf s_i))\in  U$, a
conclusion which contradicts  assumption (3). Hence $A\cap B=\{\mathbf 0\}$ as desired.

\end{proof}

\subsection{Existence of exclusive pairs}\label{sec-exexclu}
This section discuss algebraic conditions that allow us to produce 
approriate exclusive pairs.

\begin{lem}\label{lem-resf}
Assume $\Gamma_1(N)=\mathbf{F}_{r}/N$ is residually finite and 
$r\geq 2$. Fix an
element $\boldsymbol{\rho }$ in $N\setminus [N,N]$. 
There exists a finite
index normal subgroup $K=K_{\boldsymbol \rho}\vartriangleleft \Gamma_1(N)$ 
such that, for any  edge $(\mathbf{u},\mathbf{us}%
)$ in $\boldsymbol{\rho}=\mathbf{us}\mathbf{v}$ 
with $\mathbf s\in \{\mathbf s_1,\dots,\mathbf s_r\}$ and  
any subgroup $H< \Gamma_2(N)$ with $\overline{\pi }(H)<K$,
$$\forall \,x\in \bar{\pi}(H)
\setminus \{\overline{e}\},\;\;
\mathfrak{f}_{\boldsymbol{\rho }}((x\overline{u},x\overline{u}\overline{s},
\mathbf s))=0.$$
\end{lem}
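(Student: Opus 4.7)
The plan is to reduce the statement to a finite-avoidance problem and then invoke residual finiteness. First, I observe that the flow $\mathfrak{f}_{\boldsymbol{\rho}}$ on the Cayley graph of $\Gamma_1(N)$ has finite support, simply because $\boldsymbol{\rho}$ is a finite word in $\mathbf{F}_r$. Let me denote that support by $E^{*}\subset \mathfrak{E}$. The positive decompositions $\boldsymbol{\rho}=\mathbf{u}\mathbf{s}\mathbf{v}$ with $\mathbf{s}\in\{\mathbf{s}_1,\dots,\mathbf{s}_r\}$ are parametrized by the positive generator-occurrences in $\boldsymbol{\rho}$, so only finitely many arise; call the corresponding finite family of edges $(\bar{u},\bar{u}\bar{s},\mathbf{s})$ in the Cayley graph of $\Gamma_1(N)$ by the name $\mathcal{F}$.

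Next, for each fixed edge $(\bar{u},\bar{u}\bar{s},\mathbf{s})\in \mathcal{F}$, the set of translation parameters $x\in \Gamma_1(N)$ for which $\mathfrak{f}_{\boldsymbol{\rho}}((x\bar{u},x\bar{u}\bar{s},\mathbf{s}))\neq 0$ is exactly
$$\bigl\{\bar{v}\bar{u}^{-1}:(\bar{v},\bar{v}\bar{s},\mathbf{s})\in E^{*}\bigr\},$$
a finite set (since $E^{*}$ is finite). Taking the union over the finitely many edges in $\mathcal{F}$, I obtain a finite set $B\subset \Gamma_1(N)$ with the property that whenever $x\notin B$, the desired vanishing $\mathfrak{f}_{\boldsymbol{\rho}}((x\bar{u},x\bar{u}\bar{s},\mathbf{s}))=0$ holds simultaneously for every edge appearing in $\boldsymbol{\rho}$.

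Finally, residual finiteness produces the desired subgroup. For each $g\in B\setminus\{\bar{e}\}$, pick a finite index normal subgroup $K_g\vartriangleleft \Gamma_1(N)$ with $g\notin K_g$ (residual finiteness gives a finite index subgroup avoiding $g$, and passing to its normal core makes it normal). Then set $K=\bigcap_{g\in B\setminus\{\bar{e}\}} K_g$; this is a finite intersection of finite index normal subgroups, hence itself a finite index normal subgroup of $\Gamma_1(N)$, and by construction $K\cap B\subseteq \{\bar{e}\}$. For any subgroup $H<\Gamma_2(N)$ with $\bar{\pi}(H)<K$, any $x\in \bar{\pi}(H)\setminus\{\bar{e}\}$ lies in $K\setminus\{\bar{e}\}$, so $x\notin B$, and the conclusion is immediate. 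There is no real obstacle here; the only point that requires some thought is isolating $B$ as the obstruction set, after which residual finiteness is tailored precisely to suppress it.
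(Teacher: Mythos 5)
Your proof is correct, and it is essentially the same argument the paper gives: isolate a finite set of obstructions and use residual finiteness (together with normal cores and a finite intersection) to build a finite-index normal subgroup $K$ avoiding them. The only difference is in packaging. You identify the obstruction set directly as the finite set $B$ of bad translation parameters $x$, read off from the finite support $E^{*}$ of $\mathfrak{f}_{\boldsymbol{\rho}}$, and then choose $K$ to miss $B\setminus\{\bar{e}\}$. The paper instead takes $B_{\boldsymbol{\rho}}$ to be the set of proper subwords of $\boldsymbol{\rho}$ with nontrivial image in $\Gamma_1(N)$, chooses $K$ normal of finite index avoiding $\bar{\pi}(B_{\boldsymbol{\rho}})$, and then observes that any bad $x$ is a conjugate of some element of $\bar{\pi}(B_{\boldsymbol{\rho}})$, so normality of $K$ forces $x\notin K$. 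Your direct description of $B$ makes the final step slightly shorter (no need to invoke normality to handle conjugates), while the paper's subword formulation is a touch more word-combinatorial; the content is the same. One cosmetic remark: if $B\setminus\{\bar{e}\}$ happens to be empty the intersection is vacuous, and one should just take $K=\Gamma_1(N)$ — a trivial case worth a half-sentence but not a gap.
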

\begin{rem}
Since $\boldsymbol{\rho } \notin [N,N]$, the flow induced by $\boldsymbol{
\rho }$ is not identically zero. Therefore, after changing $\boldsymbol \rho$ 
to $\boldsymbol \rho^{-1}$ 
if necessary, there exists a reduced word 
$\mathbf u$ and $i\in\{1,\dots, r\}$ such that $\boldsymbol \rho= \mathbf u \mathbf s_i \mathbf u'$
and $
\mathfrak{f}_{\boldsymbol{\rho }}
((\overline{u},\overline{u}\overline{s}_{i},\mathbf s_i))\neq 0$. Hence
Lemma \ref{lem-resf} provides a way to verify 
conditions 1 and 2 of Lemma \ref{lem-exclu}.
\end{rem}

\begin{proof}
For any element $\boldsymbol{\rho }$ in $N\setminus [N,N]$, view $\boldsymbol{%
\rho }$ as a reduced word in $\mathbf F_r$. 
Let $B_{\boldsymbol{\rho }}$ be  the collection of
all proper subword $\mathbf{u}$ of $\boldsymbol{\rho }$ such that $\bar{\pi} 
(\mathbf{u})$ is not trivial in $\Gamma_1(N)$. 
Since $\Gamma_1(N)$ is residually
finite, there exists a normal subgroup $K\vartriangleleft \Gamma_1(N)$ 
such that $\Gamma_1(N)/K$ is finite and 
$\bar{\pi }(B_{\mathbf{\rho }})\cap $ $K=\emptyset $.

Suppose there exists $x\in \bar{\pi}(H)$ such that $x$ is not trivial  
and $\mathfrak{f}_{\mathbf{\rho }}((x\overline{u},x\overline{u}%
\overline{s},\mathbf s))\neq 0$. 
Therefore, there is a proper subword 
$\mathbf{v}$ of $\boldsymbol{\rho }$ such that 
$\boldsymbol{\rho }=\mathbf{vw}$ and $
\overline{v}=x\overline{u}$.  
Since both $\mathbf u$ and $\mathbf v$ are 
prefixes of $\boldsymbol \rho$ and $x$ is not trivial, 
$\mathbf v\mathbf u^{-1}$ is the conjugate of a proper subword of 
$\boldsymbol \rho$ with non-trivial image in $\Gamma_1(N)$.
By construction this implies that $\bar{\pi} (\mathbf{vu}^{-1})\notin K$,
a contradition since $\bar{\pi} (\mathbf{vu}^{-1})=x\in \bar{\pi}(H)<K$.
\end{proof}

\begin{rem} By a classical result of Hirsch, polycyclic groups 
are residually finite, \cite[5.4.17]{Rob}. 
By a result of P. Hall, a finitely generated group 
which is an extension of an abelian group by a nilpotent group is residually 
finite.  In particular, all finitely generated metabelian groups
are residually finite, \cite[15.4.1]{Rob}. Gruenberg, \cite{Grue}, proves that
free polynilpotent groups are residually finite. The free solvable groups $\mathbf S_{d,r}$ are examples of free polynilpotent groups. 
Note that all finitely generated residually finite groups are Hopfian, 
\cite[6.1.11]{Rob}.
\end{rem}

Our next task is to find ways to verify condition 3 of Lemma \ref{lem-exclu}.
To this end, 
let $A$ be the abelian group $\Gamma_1(N)/[\Gamma_1(N),\Gamma_1(N)]$. 
Fix $m=(m_1,\dots,m_r)\in \mathbb  N^r$ and let $A_m$ be the subgroup of $A$ generated by the
images of the elements $\bar{s}_i^{m_i}$, $1\le i\le r$, in $A$. Let
$T_m$ be the finite Abelian group $T_m=A/A_m$. 
Let $\pi_{T_m} :\mathbf F_r \ra T_m$
be the projection from $\mathbf F_r$ onto $T_m$. Set
$$H_m=
\left\langle s_{i}^{m_i},1\leq i\leq r\right\rangle <\Gamma_2(N).$$

\begin{lem} \label{lem-Tm}
Fix a reduced word  $\boldsymbol{\rho }\in N\backslash [N,N]$. 
Assume that 
$\boldsymbol{\rho }=\mathbf{us}\mathbf{v}$, 
where $\mathbf{s}\in \{\mathbf s_1,\dots,\mathbf s_r\}$
and $\mathfrak{f}_{\mathbf{\rho }}((\overline{%
u},\overline{u}\overline{s},\mathbf s))\neq 0$. Fix $m\in \mathbb N^r$ 
and assume 
that, in the 
finite abelian group $T_m$,  $\pi_{T_m}(\mathbf{u})\notin
\left\langle \pi_{T_m }(\mathbf{s})\right\rangle $. Then
the  edge $(\overline{u},\overline{u}%
\overline{s},\mathbf s))$ is not in 
$$U(H_m) =\bigcup _{g\in H_m }\text{\em supp} (\mathfrak{f}_{g}).  $$
\end{lem}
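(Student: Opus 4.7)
The plan is to argue by contradiction: suppose some $g\in H_m$ has the edge $(\bar u,\bar u\bar s,\mathbf s)$ in the support of its flow $\mathfrak f_g$, and then exhibit an equation in $\Gamma_1(N)$ whose projection to $T_m$ violates the hypothesis $\pi_{T_m}(\mathbf u)\notin\langle \pi_{T_m}(\mathbf s)\rangle$.

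First, I would lift $g\in H_m$ to an element $\mathbf g\in \mathbf F_r$ written in the specific form
$$\mathbf g=\mathbf s_{i_1}^{\epsilon_1 m_{i_1}}\mathbf s_{i_2}^{\epsilon_2 m_{i_2}}\cdots \mathbf s_{i_k}^{\epsilon_k m_{i_k}},\qquad \epsilon_j\in\{\pm1\},$$
which is possible because $H_m$ is generated by the $s_i^{m_i}$. By Theorem \ref{th-flowdescr}, $\mathfrak f_g=\mathfrak f_{\mathbf g}$, so it suffices to track the edges traversed by the associated path $p_{\mathbf g}$ on the Cayley graph of $\Gamma_1(N)$ starting at $\bar e$. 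The $j$-th factor is a single-generator run starting at the partial-product vertex
$$v_{j-1}=\bar s_{i_1}^{\epsilon_1 m_{i_1}}\cdots \bar s_{i_{j-1}}^{\epsilon_{j-1} m_{i_{j-1}}}\in\Gamma_1(N),$$
and it traverses only edges of the form $(v_{j-1}\bar s_{i_j}^t,v_{j-1}\bar s_{i_j}^{t+1},\mathbf s_{i_j})$ for integers $t$.

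The key step is this: if such an edge equals $(\bar u,\bar u\bar s,\mathbf s)$, then the labels match, forcing $\mathbf s_{i_j}=\mathbf s$, and the identity of origins gives $v_{j-1}=\bar u\bar s^{-t}$ in $\Gamma_1(N)$ for some $t\in\mathbb Z$. I would then apply $\pi_{T_m}$ to this equation. Since $T_m$ is abelian and each generator $\bar s_i^{m_i}$ maps to $0$ in $T_m$ by construction of $A_m$, every partial product $v_{j-1}$ (including the empty product for $j=1$) projects to $0$. Hence
$$0=\pi_{T_m}(v_{j-1})=\pi_{T_m}(\mathbf u)-t\,\pi_{T_m}(\mathbf s),$$
so $\pi_{T_m}(\mathbf u)=t\,\pi_{T_m}(\mathbf s)\in\langle\pi_{T_m}(\mathbf s)\rangle$, contradicting the hypothesis.

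No genuine obstacle is expected; the only delicate point is to be careful that the edge $(\bar u,\bar u\bar s,\mathbf s)$ may be visited in either direction (corresponding to $\epsilon_j=+1$ or $-1$), but in both cases the relevant starting vertex $v_{j-1}$ still lies in the $\pi_{T_m}$-kernel, so the conclusion is unchanged. The hypothesis $\mathfrak f_{\boldsymbol\rho}((\bar u,\bar u\bar s,\mathbf s))\neq 0$ plays no role in the argument itself; it is recorded only to ensure that this lemma produces an edge meeting condition (3) of Lemma \ref{lem-exclu} when later combined with Lemma \ref{lem-resf}.
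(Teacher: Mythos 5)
Your proof is correct and follows essentially the same argument as the paper: the paper also observes that if the edge $(\bar u,\bar u\bar s,\mathbf s)$ lies in the support of $\mathfrak f_g$ for $g\in H_m$, then $\bar u=x\bar s^q$ for some $x\in\bar\pi(H_m)$ and $q\in\mathbb Z$, and derives a contradiction by projecting to $T_m$. Your version is a more explicit unwinding of why such $x,q$ must exist (via the generator-power decomposition of a lift of $g$), and your closing remark about the role of the hypothesis $\mathfrak f_{\boldsymbol\rho}((\bar u,\bar u\bar s,\mathbf s))\neq 0$ is an accurate reading of how the lemma is used.
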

\begin{proof} Assume that 
$(\overline{u},\overline{u}\overline{s},\mathbf s)\in U(H_m)$. Then there
must exist $x\in \pi (H_m )$ and $q\in 
\mathbb{Z}
$ such that 
$
x\bar{s}^{q}=\overline{u}.
$
But, projecting on $T_m$, 
this contradicts the assumption $\pi_{T_m }(\mathbf{u})\notin
\left\langle \pi_{T_m }(\mathbf{s})\right\rangle $.
\end{proof}

We now put together these two lemmas and state a proposition that will 
allow us to produce exclusive pairs.
\begin{pro} \label{pro-resf}
Fix $N\vartriangleleft \mathbf F_r$ and $\boldsymbol \rho
\in N\setminus [N,N]$, in reduced form. Let $\mathbf u$ be a prefix of 
$\boldsymbol \rho$ such that $\boldsymbol \rho= \mathbf u\mathbf s\mathbf v$,
$\mathbf s\in \{\mathbf s_1,\dots,\mathbf s_r\}$
and $\mathfrak f _{\boldsymbol \rho}((\bar{u},\bar{u}\bar{s},\mathbf s))\neq 0$.
Assume that the group $\Gamma_1(N)$ is residually finite and 
there exists an integer vector  
$m=(m_1,\dots,m_r)\in \mathbb N^r$ such that, in the finite group 
$T_m$, $\pi_{T_m}(\mathbf u)\notin \langle 
\pi_{T_m}(\mathbf s)\rangle$.  Then there is $m'=(m'_1,\dots,m'_r)$ such that
the pair $(H_{m'},\boldsymbol \rho)$ is an exclusive pair in $\Gamma_2(N)$.
\end{pro}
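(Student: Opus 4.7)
The plan is to combine Lemma \ref{lem-resf} and Lemma \ref{lem-Tm} to verify the three conditions of Lemma \ref{lem-exclu} simultaneously, by choosing $m'$ to be a common refinement of the given $m$ and of the orders coming from residual finiteness.

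First, I would apply Lemma \ref{lem-resf} to $\boldsymbol{\rho}$ to produce a finite index normal subgroup $K = K_{\boldsymbol{\rho}} \vartriangleleft \Gamma_1(N)$ with the property that condition (2) of Lemma \ref{lem-exclu} is automatic for the edge $(\bar u,\bar u\bar s,\mathbf s)$ as soon as we restrict to subgroups $H<\Gamma_2(N)$ with $\bar{\pi}(H) < K$. The hypothesis $\mathfrak f_{\boldsymbol{\rho}}((\bar u,\bar u\bar s,\mathbf s))\neq 0$ directly gives condition (1) of Lemma \ref{lem-exclu}, so it only remains to choose $m'$ so that $\bar{\pi}(H_{m'})<K$ and condition (3) of Lemma \ref{lem-exclu} holds.

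Next, for each $i$, since $\Gamma_1(N)/K$ is finite, the element $\bar s_i$ has some finite order $k_i$ modulo $K$; I would then set
\[
m'_i = \mathrm{lcm}(m_i, k_i), \qquad 1\le i\le r.
\]
Since $k_i \mid m'_i$, each generator $\bar s_i^{m'_i}$ lies in $K$, hence $\bar{\pi}(H_{m'}) = \langle \bar s_i^{m'_i} : 1\le i\le r\rangle < K$, which together with Lemma \ref{lem-resf} gives condition (2) of Lemma \ref{lem-exclu}.

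The last step is to transfer the non-containment $\pi_{T_m}(\mathbf u) \notin \langle \pi_{T_m}(\mathbf s)\rangle$ from $T_m$ up to $T_{m'}$. Because $m_i \mid m'_i$, the subgroup $A_{m'} < A_m$ in the abelianization $A = \Gamma_1(N)/[\Gamma_1(N),\Gamma_1(N)]$, so there is a canonical surjection $T_{m'} = A/A_{m'} \twoheadrightarrow A/A_m = T_m$ commuting with the projections from $\mathbf F_r$. If it were the case that $\pi_{T_{m'}}(\mathbf u) = \pi_{T_{m'}}(\mathbf s)^k$ for some $k\in\mathbb Z$, pushing forward to $T_m$ would give $\pi_{T_m}(\mathbf u)\in\langle \pi_{T_m}(\mathbf s)\rangle$, contradicting the hypothesis on $m$. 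Thus $\pi_{T_{m'}}(\mathbf u)\notin \langle \pi_{T_{m'}}(\mathbf s)\rangle$, and Lemma \ref{lem-Tm} applied with $m'$ yields condition (3) of Lemma \ref{lem-exclu}. Invoking Lemma \ref{lem-exclu} concludes that $(H_{m'},\boldsymbol{\rho})$ is exclusive.

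The only conceptual subtlety (not really an obstacle) is the compatibility between the two constraints on $m'$: the divisibility $k_i \mid m'_i$ needed to land inside $K$, and the divisibility $m_i \mid m'_i$ needed so that the non-containment in $T_m$ propagates to $T_{m'}$. The $\mathrm{lcm}$ dispatches both at once, and the propagation works because the condition ``$\pi_T(\mathbf u)\notin \langle \pi_T(\mathbf s)\rangle$'' is preserved under passing to extensions of the target finite abelian group (equivalently, destroyed only under further quotients, never under lifts).
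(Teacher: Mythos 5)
Your proposal is correct and follows essentially the same path as the paper's proof: take $K_{\boldsymbol\rho}$ from Lemma \ref{lem-resf}, choose $m'_i$ a common multiple of $m_i$ and the order of $\bar s_i$ modulo $K_{\boldsymbol\rho}$, observe that the $T_m$ non-containment persists for $T_{m'}$ via the surjection $T_{m'}\twoheadrightarrow T_m$, and assemble conditions (1)–(3) of Lemma \ref{lem-exclu}. The paper states this more tersely; you have merely spelled out the $\mathrm{lcm}$ choice and the quotient argument, which the paper leaves implicit.
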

\begin{proof} Let $K_{\boldsymbol \rho}$ the the finite index normal subgroup 
of $\Gamma_1(N)$ given by Lemma \ref{lem-resf}. 
Since $K_{\boldsymbol \rho} $ is of finite index in $\Gamma_1(N)$, we can pick
$m'_i$ to be a multiple of $m_i$ such that $\bar{s}_i^{m'_i}\in K_{\boldsymbol 
\rho}$. Observe that the assumption 
$\pi_{T_m}(\mathbf u)\notin \langle 
\pi_{T_m}(\mathbf s)\rangle$ implies the same property with $m$ replaced by $m'$.
Applying Lemmas \ref{lem-resf}, \ref{lem-Tm} and Lemma \ref{lem-exclu} 
yields that $(H_{m'},\boldsymbol \rho)$ is an exclusive pair in $\Gamma_2(N)$.
\end{proof}

We conclude this section with a concrete  application of Proposition 
\ref{pro-resf}.

\begin{pro}\label{pro-exclunil}
Assume that $\Gamma _{1}(N)=\mathbf F_r/N$ 
is an infinite nilpotent group and $r\ge 2$. Then there
exists an exclusive pair $(\Gamma ,\mathbf{\rho })$ in $\Gamma _{2}(N)$
such that $\pi (\Gamma )$ is a subgroup of finite index in $\Gamma
_{1}(N)$.
\end{pro}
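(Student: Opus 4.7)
The plan is to invoke Proposition~\ref{pro-resf} in the main case and handle a single exceptional cyclic case by direct verification of Definition~\ref{def-exclu}.

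\emph{Setup and reduction.} Finitely generated nilpotent groups are residually finite (Hirsch), and a nilpotent group with finite abelianization is finite (by induction on class); hence $A := \Gamma_1(N)/[\Gamma_1(N),\Gamma_1(N)]$ is an infinite finitely generated abelian group, whose elements $\alpha_i$ denote the images of $\bar s_i$. Since $r\geq 2$ and $\Gamma_1(N)$ is infinite, the kernel $N$ has infinite index in $\mathbf F_r$, so $N$ is free of infinite rank and in particular non-abelian, giving $N\setminus[N,N]\neq\emptyset$. To apply Proposition~\ref{pro-resf} it suffices to produce $\boldsymbol\rho\in N\setminus[N,N]$ together with a decomposition $\boldsymbol\rho=\mathbf u\mathbf s\mathbf v$ ($\mathbf s$ a generator) for which $\mathfrak f_{\boldsymbol\rho}$ is non-zero on $(\bar u,\bar u\bar s,\mathbf s)$ and the image of $\bar u$ in $A$ lies outside $\langle\alpha_s\rangle$: residual finiteness of the finitely generated abelian group $A/\langle\alpha_s\rangle$ then supplies an $m\in\mathbb N^r$ satisfying $\pi_{T_m}(\mathbf u)\notin\langle\pi_{T_m}(\mathbf s)\rangle$.

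\emph{Generic case:} there exist $i,j$ with $\alpha_j\notin\langle\alpha_i\rangle$. Start from any $\boldsymbol\rho_0\in N\setminus[N,N]$ whose flow is non-zero on some edge of label $\mathbf s_i$; such a $\boldsymbol\rho_0$ arises from a suitable iterated commutator of depth $c+1$ in $\gamma_{c+1}(\mathbf F_r)\subset N$ (where $c$ is the nilpotency class of $\Gamma_1(N)$), possibly replaced by $\boldsymbol\rho_0^{-1}$ so that $\mathbf s_i$ occurs positively. Let $g_0$ be the vertex with $(g_0,g_0\bar s_i,\mathbf s_i)$ active. By Remark~\ref{rem-alg-a} the conjugate $\boldsymbol\rho := \mathbf w\boldsymbol\rho_0\mathbf w^{-1}$ has flow $\tau_{\bar w}\mathfrak f_{\boldsymbol\rho_0}$, with corresponding prefix image $\alpha_w+\alpha_{g_0}$ in $A$. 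If $\alpha_{g_0}\notin\langle\alpha_i\rangle$ take $\mathbf w=e$; otherwise take $\mathbf w=\mathbf s_j$, so that $\alpha_w+\alpha_{g_0}=\alpha_j+\alpha_{g_0}\notin\langle\alpha_i\rangle$ (a coset of a proper subgroup is a proper subset). Proposition~\ref{pro-resf} then delivers the exclusive pair $(H_{m'},\boldsymbol\rho)$.

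\emph{Exceptional case:} for every $i,j$, $\alpha_j\in\langle\alpha_i\rangle$. Since the $\alpha_i$ generate $A$ this forces $A$ cyclic; being infinite, $A\simeq\mathbb Z$, and the classical fact that a nilpotent group with cyclic abelianization is itself cyclic gives $\Gamma_1(N)\simeq\mathbb Z$. Every $\alpha_i$ is then $\pm 1$, and after inverting some generators we may assume $\bar s_i=1$ for all $i$. Now $\langle\alpha_i\rangle=\mathbb Z$ for every $i$ and Proposition~\ref{pro-resf} is inapplicable, so I construct the pair directly: set $\boldsymbol\rho=\mathbf s_1\mathbf s_2^{-1}\in N\setminus[N,N]$ (its flow is concentrated at $\bar e=0$ with value $\epsilon_1-\epsilon_2$) and $\Gamma=H_m=\langle s_1^m,\dots,s_r^m\rangle$ for any $m\geq 2$. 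Condition~(i) of Definition~\ref{def-exclu} is immediate, the translates $\tau_{km}\bar a(\rho)$ having pairwise disjoint singleton supports $\{km\}$. For condition~(ii), observe that each letter $s_i^{\pm m}$ in an expression of $g\in H_m$ traces an interval of the form $[km,km+m-1]$ and contributes the \emph{same} value $\pm\epsilon_i$ at every position of that interval; hence $\bar a(g)$ is constant on each block $[km,km+m-1]$. If $\bar a(g)$ were supported on $m\mathbb Z$ each block-constant would vanish, forcing $\bar a(g)=0$ and therefore $g=e$ by injectivity of the Magnus embedding, so $A\cap B=\{0\}$. In both cases $\pi(\Gamma)$ contains $\bar s_i^{m_i}$ for every $i$, so $\Gamma_1(N)/\pi(\Gamma)$ is a finitely generated nilpotent group of bounded exponent, hence finite. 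The principal technical point is producing $\boldsymbol\rho_0$ in the generic case when $\bar s_i,\bar s_j$ do not commute in $\Gamma_1(N)$, which requires a careful choice of iterated commutator to ensure non-cancellation of the flow on a positive $\mathbf s_i$-edge.
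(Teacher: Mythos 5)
Your overall strategy matches the paper's: build an exclusive pair by applying Proposition \ref{pro-resf}, splitting into a main case and a degenerate case and then finishing with a finite-index argument. However, there are two genuine gaps.

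\textbf{The generic case is not complete.} Your argument hinges on producing $\boldsymbol\rho_0\in N\setminus[N,N]$ whose flow is non-zero on some edge labeled by a \emph{specific} generator $\mathbf s_i$ (the one picked so that $\alpha_j\notin\langle\alpha_i\rangle$). You assert this comes from ``a suitable iterated commutator of depth $c+1$'' and then, in your last sentence, flag it as ``the principal technical point\dots which requires a careful choice\dots to ensure non-cancellation of the flow.'' That caveat is exactly where the proof is missing: being in $\gamma_{c+1}(\mathbf F_r)\subset N$ does not by itself guarantee membership in $N\setminus[N,N]$, nor does it place a non-zero flow value on an $\mathbf s_i$-edge, and you do not verify either. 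The paper sidesteps this entirely by a minimality trick: with $\bar s_{i_1},\bar s_{i_2}$ chosen $\mathbb Z$-independent in the abelianization, it takes $\boldsymbol\rho$ of \emph{minimal length} in $N\cap\langle\mathbf s_{i_1},\mathbf s_{i_2}\rangle$ (non-empty because $\gamma_{c+1}(\mathbf F_r)$ supplies commutators); minimality forces the projected path to be a simple loop, so the flow is non-zero on every edge it crosses, it uses both letters (since no nontrivial power of a single $\mathbf s_{i_k}$ is in $N$), and in particular one can choose a prefix $\mathbf u$ whose abelianization image has a non-zero $\alpha_{i_2}$-component — giving the Lemma~\ref{lem-Tm} hypothesis cleanly. (A subsidiary issue in your conjugation step: after conjugating by $\mathbf w=\mathbf s_j$, you also need the reduced form of $\mathbf w\boldsymbol\rho_0\mathbf w^{-1}$ to retain the prefix decomposition $\mathbf u\mathbf s\mathbf v$ required by Lemma~\ref{lem-resf}; if $\mathbf w$ cancels against the ends of $\boldsymbol\rho_0$ this needs an argument.)

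\textbf{The finite-index argument is incorrect as stated.} You write ``$\Gamma_1(N)/\pi(\Gamma)$ is a finitely generated nilpotent group of bounded exponent, hence finite,'' which treats $\pi(\Gamma)=\langle\bar s_1^{m_1},\dots,\bar s_r^{m_r}\rangle$ as if it were normal in $\Gamma_1(N)$. In general it is not (e.g.\ already in the Heisenberg group $\langle\bar s_1^m,\bar s_2^m\rangle$ fails to be normal). The conclusion that $\pi(\Gamma)$ has finite index is true, but proving it requires more care; the paper does this by induction on the nilpotency class, using that $\gamma_c(\Gamma_1(N))$ is central and that $[s_{i_c}[\dots[s_{i_2},s_{i_1}]]]^{m^c}=[s_{i_c}^m[\dots[s_{i_2}^m,s_{i_1}^m]]]$ to control the index step by step.

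Your $\Gamma_1(N)\simeq\mathbb Z$ case is correct and is a nice direct verification of Definition~\ref{def-exclu} (the block-constancy of $\bar a(g)$ for $g\in H_m$ together with the support on $m\mathbb Z$ works). Note, though, that the paper's case split is by ``virtually $\mathbb Z$ vs.\ not,'' whereas yours is ``$A$ cyclic with every $\alpha_i$ a generator vs.\ not''; your exceptional case only covers $\Gamma_1(N)\simeq\mathbb Z$, so every other virtually-$\mathbb Z$ instance (e.g.\ $\mathbb Z\times\mathbb Z/2$) must pass through the generic case, where the gap lives.
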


\begin{proof} First we construct an exclusive pair using Proposition 
\ref{pro-resf}. Suppose that $\Gamma _{1}(N)$ is not virtually $\mathbb{Z}$. 
Then the torsion-free rank of $
\Gamma _{1}(N)/[\Gamma _{1}(N),\Gamma _{1}(N)]$ is at least  $2$. Choose two
generators $s_{i_{1}},s_{i_{2}}$ such that their projections in the
abelianization are $\mathbb{Z}$-independent. Choose $\boldsymbol {\rho }$ to be
an element of minimal length in $N\cap \left\langle \mathbf{s}_{i_{1}},%
\mathbf{s}_{i_{2}}\right\rangle $. Note that since $\Gamma _{1}(N)$ is
nilpotent, this intersection contains commutators of $\mathbf{s}_{i_{1}},%
\mathbf{s}_{i_{2}}$ with length greater than the nilpotency class, 
therefore it is
non-empty. Proposition \ref{pro-resf} applies and yields an integer $m$
such that 
$(\Gamma =\left\langle
s_{1}^{m},...,s_{r}^{m}\right\rangle,\boldsymbol{\rho })$ is an exclusive pair.

In the special case when $\Gamma _{1}(N)$ is virtually $\mathbb{Z}$, choose 
$\boldsymbol{\rho }$ to be an element of minimal length in $N$, 
and a generator $%
\mathbf s_{i_{1}}$ such that $\bar{s}_{i_1}$ 
is not a torsion element in $\Gamma_1(N)$. Set $\Gamma
=\left\langle s_{i_1}^{m}\right\rangle $ with 
$m=[\Gamma _{1}(N):K_{\boldsymbol{%
\rho }}]$. Then by Lemmas \ref{lem-exclu}, Lemma \ref{lem-resf}  
and inspection,  $(\Gamma ,\boldsymbol{\rho })$ is an exclusive pair.

Next we use induction on nilpotency class $c$ to show 
that, for any $m\in \mathbb{N}$, $\pi (\Gamma )=\left\langle 
\overline{s}_{1}^{m},...,\overline{s}_{r}^{m}\right\rangle $ is a subgroup
of finite index in $\Gamma _{1}(N)$.
When $c=1$, observe that the statement is
obviously true for finitely generated abelian groups. Suppose $\Gamma _{1}(N)
$ is of nilpotency class $c$.  Let $H=\gamma_c(\Gamma_1(N))$. 
Using the induction 
hypothesis,  it suffices to prove that $H\cap \pi
(\Gamma )$ is a finite index subgroup of $H$. Note that $%
H$ is contained in the center of $\Gamma_11(N)$ and 
is generated by commutators of length $c$. Further,
\[
\lbrack
s_{i_{c}}[...[s_{i_{2}},s_{i_{1}}]]]^{m^{c}}=[s_{i_{c}}^{m}[...[s_{i_{2}}^{m},s_{i_{1}}^{m}]]].
\]%
Therefore $H/H\cap \pi (\Gamma )$ is a
finitely generated torsion abelian group, hence finite, as desired.
\end{proof}

\subsection{Random walks associated with exclusive pairs}\label{sec-up} 

The following result captures the main idea and construction of this section.
\begin{theo} \label{theo-Upper}
Let $\mu$ be a symmetric probability measure on $\Gamma_2(N)$.
Let $\Gamma <\Gamma_2(N)$ and $\boldsymbol{\rho} $ 
be an exclusive pair as in {\em Definition \ref{def-exclu}}. 
Set $\rho=\pi_2(\boldsymbol\rho)\in \Gamma_2(N)$. Let $\nu$ be the probability measure on $\Gamma_2(N)$ such that
$$\nu(\rho^{\pm 1})=1/2.$$
Let $\varphi$ be a symmetric probability measure on $\Gamma$ such that 
\begin{equation}\label{comp-exclu}
\mathcal E _{\nu*\varphi*\nu}\le C_0 \mathcal E_\mu.
\end{equation}
Let $\bar{\varphi}$ be the symmetric probability on 
$\overline{\Gamma}=\bar{\pi}(\Gamma)< \Gamma_1(N)$ defined by 
$$\forall\,\bar{g}\in \Gamma_1(N),\;\;\bar{\varphi}(\bar{g})=\varphi(\bar{\pi}^{-1}(\bar{g})).$$ 
On the wreath product 
$\mathbb Z\wr \overline{\Gamma}$ (whose group law will be denoted here  by $\star$),
consider the switch-walk-switch measure $q=\eta\star \bar{\varphi}\star \eta$
with $\eta(\pm 1)=1/2$ on $\mathbb Z$. Then there are constants $C,k\in (0,\infty)$  
such that
$$\mu^{*2kn}(e_*)\le C q^{\star 2n}(e_\star).$$ 
\end{theo}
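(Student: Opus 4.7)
The plan is two steps: a Dirichlet form comparison followed by a coupling built on the Magnus embedding. For the first step, hypothesis (\ref{comp-exclu}) is equivalent to $\mathcal E_\mu\ge C_0^{-1}\mathcal E_{\nu*\varphi*\nu}$, and the standard comparison principle (e.g.\ \cite[Theorem 2.3]{Pittet2000}) then supplies constants $k,C_1$ with
$$
\mu^{*2kn}(e_*)\le C_1(\nu*\varphi*\nu)^{*2n}(e_*),
$$
so the theorem reduces to the pointwise bound $(\nu*\varphi*\nu)^{*n}(e_*)\le q^{\star n}(e_\star)$.

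To prove this bound I would couple the two walks by using common random inputs. Realize the $(\nu*\varphi*\nu)$-walk on $\Gamma_2(N)$ as $Z_n=\prod_{i=1}^n\rho^{\epsilon_i}g_i\rho^{\epsilon_i'}$ with $\epsilon_i,\epsilon_i'\in\{\pm 1\}$ iid uniform and $g_i\sim\varphi$, and realize the $q$-walk $W_n=(F_n,T_n)$ on $\mathbb Z\wr\overline\Gamma$ from the same triples $(\epsilon_i,g_i,\epsilon_i')$: the two $\epsilon$'s supply the switch-walk-switch switches and $\bar g_i$ drives the base walk. Because $\boldsymbol\rho\in N$ gives $\bar\rho=\bar e$, the projection $\bar Z_n$ coincides with $T_n=\bar g_1\cdots\bar g_n$. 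Iterating Lemma \ref{lem-alg-a} and using $\bar a(\rho^{\pm 1})=\pm\bar a(\rho)$ yields the key decomposition
$$
\bar a(Z_n)=C+D,\quad C:=\sum_{i=1}^n\bigl(\epsilon_i\tau_{T_{i-1}}+\epsilon_i'\tau_{T_i}\bigr)\bar a(\rho)\in A,\quad D:=\bar a(g_1\cdots g_n)\in B.
$$
The event $\{Z_n=e_*\}$ forces $T_n=\bar e$ and $C+D=0$; then $D=-C\in A$, so by exclusive pair condition (ii) one gets $D\in A\cap B=\{0\}$, whence both $D=0$ and $C=0$.

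To conclude, I would invoke exclusive pair condition (i) to identify $A$ with the free $\mathbb Z$-module $\sum_{\bar x\in\overline\Gamma}\mathbb Z_{\bar x}$ via $\tau_{\bar x}\bar a(\rho)\leftrightarrow\delta_{\bar x}$ (see Remark \ref{rem-wr-rho}). Under this identification, the coefficient of $\delta_{\bar x}$ in $C$ equals $\sum_{i:T_{i-1}=\bar x}\epsilon_i+\sum_{i:T_i=\bar x}\epsilon_i'$, which is precisely $F_n(\bar x)$. Hence $\{Z_n=e_*\}\subseteq\{T_n=\bar e,\ F_n\equiv 0\}=\{W_n=e_\star\}$, and taking probabilities gives $(\nu*\varphi*\nu)^{*n}(e_*)\le q^{\star n}(e_\star)$, completing the reduction.

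I expect the main obstacle to be the bookkeeping in this last matching step: one must carefully align the two $\rho$-factors per step of $Z_n$ with the pre- and post-move switches of the switch-walk-switch structure, both for the base-position indices $T_{i-1}$ versus $T_i$ and for the signs $\epsilon_i$ versus $\epsilon_i'$. Once this identification is set up cleanly, everything else is a direct combination of the Magnus embedding algebra (Lemma \ref{lem-alg-a}) with the two defining properties of an exclusive pair.
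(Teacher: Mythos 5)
Your proposal is correct and follows essentially the same route as the paper: a Dirichlet-form comparison via \cite[Theorem 2.3]{Pittet2000} reduces to the pointwise inequality $(\nu*\varphi*\nu)^{*n}(e_*)\le q^{\star n}(e_\star)$, which is then proved by realizing both walks from common randomness, decomposing $\bar a(Z_n)$ via Lemma \ref{lem-alg-a}, and invoking conditions (ii) and (i) of an exclusive pair to force the lamp configuration to vanish. The only cosmetic difference is that the paper's Lemma \ref{Return Expression} uses a cyclic conjugation to pair the $\rho$-exponents two per visited site, whereas you handle the boundary $\rho$-factors directly; the two bookkeepings give the same lamp configuration for the switch-walk-switch walk.
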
 
\begin{proof} By \cite[Theorem 2.3]{Pittet2000}, the comparison assumption 
between the Dirichlet forms of $\mu$ and $\nu*\varphi*\nu$ implies that
there is a constant $C$ and an integer $k$ such that
$$\forall \,n,\;\;\mu^{*kn}(e_*)\le C [\nu*\varphi*\nu]^{*2n}(e_*).$$
Hence, the desired conclusion easily follows from the next proposition.
\end{proof}

\begin{pro}
\label{Upper}
Let $\Gamma <\Gamma_2(N)$ and $\boldsymbol{\rho}$ be an exclusive pair as in 
{\em Definition \ref{def-exclu}}.
Let $\rho=\pi(\boldsymbol{\rho})$ and let $\nu$ be the probability measure 
on $\Gamma_2(N)$ such that $\nu (\rho )=\nu (\rho ^{-1})=\frac{1}{2}$. 
Let $\varphi$ be a probability measure supported on $\Gamma
$. Let  $\bar{\varphi }$ be the pushforward of $\varphi$ on $\bar{\pi}(\Gamma)=
\overline{\Gamma}< \Gamma_1(N)$. Let $\eta$ be the probability measure 
on $\mathbb Z$ such that $\eta(\pm 1)=1/2$.
Let $q=\eta \star 
\bar{\varphi }\star \eta $ be the switch-walk-switch measure
on $ \mathbb{Z}
\wr \overline{\Gamma}$. Then%
\[
\left( \nu \ast \varphi \ast \nu \right) ^{\ast n}(e_\ast)\leq (\eta \star 
\overline{\varphi }^{\prime }\star \eta )^{\star n}(e_\star)
=q^{\star n}(e_\star). 
\]
\end{pro}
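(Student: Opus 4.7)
The plan is to use the Magnus embedding $\bar{\psi}:\Gamma_2(N)\hookrightarrow \mathbb{Z}^r\wr\Gamma_1(N)$ to couple the random walk driven by $\nu*\varphi*\nu$ on $\Gamma_2(N)$ with the switch-walk-switch random walk driven by $q$ on $\mathbb{Z}\wr\overline{\Gamma}$, and then show that under this coupling the event $\{Z_n=e_*\}$ is contained in the event that the $q$-walk sits at $e_\star$ at time $n$.

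Let $(\epsilon_i,\gamma_i,\epsilon'_i)_{i\ge 1}$ be an i.i.d.\ sequence with $\epsilon_i,\epsilon'_i$ uniform on $\{\pm 1\}$ and $\gamma_i$ distributed according to $\varphi$ on $\Gamma$. Set $\xi_i=\rho^{\epsilon_i}\gamma_i\rho^{\epsilon'_i}$ so that $Z_n:=\xi_1\cdots\xi_n$ has law $(\nu*\varphi*\nu)^{*n}$. Write $\bar{\gamma}_i=\bar{\pi}(\gamma_i)\in\overline{\Gamma}$ and $T_i=\bar{\gamma}_1\cdots\bar{\gamma}_i$. Iterating Lemma \ref{lem-alg-a} (using $\bar{\pi}(\rho)=\bar{e}$ and $\bar{a}(\rho^{-1})=-\bar{a}(\rho)$) should yield
\[
\bar{a}(Z_n)=\bar{a}(\gamma_1\cdots\gamma_n)+\sum_{i=1}^{n}\bigl(\epsilon_i\,\tau_{T_{i-1}}\bar{a}(\rho)+\epsilon'_i\,\tau_{T_i}\bar{a}(\rho)\bigr),\qquad \bar{\pi}(Z_n)=T_n.
\]
The first term lies in $B=\{\bar{a}(g):g\in\Gamma\}$, while the remaining sum lies in the submodule $A$ generated by $\{\tau_{\bar{g}}\bar{a}(\rho):\bar{g}\in\overline{\Gamma}\}$.

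Now suppose $Z_n=e_*$. Then $T_n=\bar{e}$ and $\bar{a}(Z_n)=0$. Property (ii) of Definition \ref{def-exclu} gives $A\cap B=\{0\}$, so both the $A$-contribution and the $B$-contribution to $\bar{a}(Z_n)$ must vanish separately. By property (i), the $\mathbb{Z}$-independence of $\{\tau_{\bar{g}}\bar{a}(\rho)\}_{\bar{g}\in\overline{\Gamma}}$, vanishing of the $A$-part is equivalent to the statement that, for every $\bar{g}\in\overline{\Gamma}$,
\[
\sum_{i:\,T_{i-1}=\bar{g}}\epsilon_i+\sum_{i:\,T_i=\bar{g}}\epsilon'_i=0.
\]
Combined with $T_n=\bar{e}$, this is precisely the event that the switch-walk-switch random walk on $\mathbb{Z}\wr\overline{\Gamma}$ generated by the same triples $(\epsilon_i,\bar{\gamma}_i,\epsilon'_i)$—which by construction is driven by $q=\eta\star\bar{\varphi}\star\eta$—returns to $e_\star$ at time $n$. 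Hence $\{Z_n=e_*\}$ is included in that event, and taking probabilities gives $(\nu*\varphi*\nu)^{*n}(e_*)\le q^{\star n}(e_\star)$.

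The only genuinely technical point is the clean derivation of the decomposition of $\bar{a}(Z_n)$ into its $A$- and $B$-parts; this is pure bookkeeping based on Lemma \ref{lem-alg-a} and the wreath-product multiplication rule. Once this decomposition is in hand, the exclusivity properties (i) and (ii) do exactly the work they were designed for, and the identification of the resulting vanishing condition with the return event for the switch-walk-switch random walk driven by $q$ is immediate from the definition of $q$.
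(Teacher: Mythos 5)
Your proof is correct and follows essentially the same approach as the paper: both decompose $\bar{a}(Z_n)$ via Lemma \ref{lem-alg-a} into a contribution from the $\rho$-conjugates (the $A$-part) plus $\bar{a}$ of the $\Gamma$-product (the $B$-part), invoke condition (ii) to split the vanishing, and condition (i) to read off the coefficients as lamp values of the switch-walk-switch walk. The only cosmetic difference is that the paper first performs a cyclic rearrangement so that the condition appears as $\sum_j(\varepsilon_{2j-1}+\varepsilon_{2j})\tau_{\overline{S}_j}\bar{a}(\rho)$, while you keep the two switch moves attached to $\tau_{T_{i-1}}$ and $\tau_{T_i}$; both parametrize the same return event for $q^{\star n}$.
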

To prove this proposition, we will use the following lemma.

\begin{lem}
\label{Return Expression}
Let $\varphi$ be a probability measure on $\Gamma_2(N)$. Let $\nu$
be the uniform  measure  on $\{r_0^{\pm 1}\}$ where 
$r_0\in \Gamma_2$ and $r_0\neq r_0^{-1}$. Let $(Y_i)_1^\infty$ and 
$(\varepsilon_i)_1^\infty$ be i.i.d.\ sequence with law $\varphi$ and $\nu$
respectively. Let $S_n=Y_1\cdots Y_n$ and 
$\overline {S}_n=\bar{\pi}(S_n)$. Then we have
 \begin{eqnarray*}
\lefteqn{\left( \nu \ast \varphi \ast \nu \right) ^{\ast n}(e_*)}\hspace{.5in} && \\
&=&
\mathbf P\left( \overline{S}_n=\bar{e},\sum_{j=1}^{n}(\varepsilon _{2j-1}+\varepsilon
_{2j})\tau _{\overline{S}_{j}}\bar{a}(r_{0})+\bar{a}(S_{n})=0\right) .
\end{eqnarray*}
\end{lem}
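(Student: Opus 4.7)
The plan is to expand $(\nu \ast \varphi \ast \nu)^{\ast n}(e_\ast) = \mathbf{P}(Z_n = e_\ast)$, where
$$Z_n = \varepsilon_1 Y_1 \varepsilon_2 \cdot \varepsilon_3 Y_2 \varepsilon_4 \cdots \varepsilon_{2n-1} Y_n \varepsilon_{2n}$$
is the natural product of $3n$ independent factors (with the $\varepsilon_i$ iid of law $\nu$ and the $Y_j$ iid of law $\varphi$), and then to apply the Magnus embedding to split the condition $Z_n = e_\ast$ into $\bar{\pi}(Z_n) = \bar{e}$ and $\bar{a}(Z_n) = 0$. Because $\boldsymbol{\rho} \in N$, the element $r_0 = \pi_2(\boldsymbol{\rho})$ lies in $\ker(\bar{\pi})$, so each $\varepsilon_i$ contributes trivially to $\bar{\pi}(Z_n)$ and the first condition reduces to $\bar{S}_n = \bar{e}$. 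Applying Lemma~\ref{lem-alg-a} to $r_0 r_0^{-1} = e_\ast$ (together with $\tau_{\bar{e}} = \mathrm{id}$) yields $\bar{a}(r_0^{-1}) = -\bar{a}(r_0)$, hence $\bar{a}(\varepsilon_i) = \sigma_i \bar{a}(r_0)$ for an iid sequence of symmetric signs $\sigma_i \in \{\pm 1\}$ recording the exponent of $r_0$ in $\varepsilon_i$.

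Next I would iterate the cocycle identity $\bar{a}(gh) = \bar{a}(g) + \tau_{\bar{g}} \bar{a}(h)$ along the $3n$ factors of $Z_n$, noting that each $\bar{\pi}(\varepsilon_i) = \bar{e}$ contributes no translation to the subsequent shifts. This yields
$$\bar{a}(Z_n) = \sum_{j=1}^n \tau_{\bar{S}_{j-1}} \bar{a}(Y_j) + \sum_{j=1}^n \sigma_{2j-1} \tau_{\bar{S}_{j-1}} \bar{a}(r_0) + \sum_{j=1}^n \sigma_{2j} \tau_{\bar{S}_j} \bar{a}(r_0);$$
the first sum telescopes to $\bar{a}(S_n)$ by a second use of the cocycle identity.

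The final step is to recast the two $\sigma$-sums into the symmetric form $\sum_{j=1}^n (\varepsilon_{2j-1} + \varepsilon_{2j}) \tau_{\bar{S}_j} \bar{a}(r_0)$ appearing in the lemma. Restricted to $\{\bar{S}_n = \bar{e}\}$ one has $\tau_{\bar{S}_0} = \tau_{\bar{S}_n}$, so after reindexing $j \mapsto j-1$ in the sum over $\sigma_{2j-1}$ (and routing its boundary term at shift $\bar{S}_0$ onto the shift $\bar{S}_n$), the coefficient of $\tau_{\bar{S}_j} \bar{a}(r_0)$ becomes $\sigma_{2j+1} + \sigma_{2j}$ for $j = 1, \dots, n-1$, with coefficient $\sigma_1 + \sigma_{2n}$ at $j = n$. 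This amounts to a cyclic permutation of the iid family $(\sigma_{2j-1})_{j=1}^n$, hence preserves its joint law; since the $\sigma_i$ are independent of the $Y_j$'s, the joint distribution with $(\bar{S}_1, \dots, \bar{S}_n, S_n)$ is unchanged. Relabelling the cyclically shifted sequence as a fresh iid family $(\varepsilon_{2j-1})$ (and taking $\varepsilon_{2j} = \sigma_{2j}$) recovers the stated formula. The main obstacle is recognizing that the lemma is an identity of probabilities rather than of random variables: pointwise, the expansion attaches $\sigma_{2j-1}$ to the shift $\bar{S}_{j-1}$ rather than $\bar{S}_j$, and the cyclic reindexing (justified by the iid symmetry of $(\sigma_i)$ combined with $\bar{S}_0 = \bar{S}_n$ on the prescribed event) is essential to arrive at the symmetric form.
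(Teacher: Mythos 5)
Your proof is correct and follows essentially the same route as the paper's: expand the product via the cocycle identity from Lemma~\ref{lem-alg-a}, exploit that $r_0\in\ker\bar{\pi}$ so the $r_0$-factors do not shift the translations, and then use a cyclic relabelling of the i.i.d.\ signs (valid because they are independent of the $Y_j$'s and because $\bar S_0=\bar S_n=\bar e$ on the event considered) to reach the symmetric form. The only cosmetic difference is that the paper performs the cyclic shift at the outset by conjugating the whole product by $r_0^{\varepsilon_1}$ before applying $\bar a$, whereas you expand first and then reindex; you also correctly flag that the step is a distributional, not pointwise, identity.
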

\begin{proof}
The product $
r_{0}^{\varepsilon _{1}}Y_{1}r_{0}^{\varepsilon _{2}+\varepsilon
_{3}}Y_{2}r_0^{\varepsilon_4}\cdots r_0^{\varepsilon _{2n-1}}Y_{n}r_{0}^{\varepsilon _{2n}}
$ has distribution $$(\nu \ast
\varphi \ast \nu )^{\ast n}. 
$$
Therefore, we have
\begin{eqnarray*}
(\nu \ast \mu \ast \nu )^{\ast n}(e_*) &=&\mathbf P(r_{0}^{\varepsilon
_{1}}Y_{1}r_{0}^{\varepsilon _{2}+
\varepsilon _{3}}Y_{2}\cdots Y_{n}r_{0}^{\varepsilon
_{n}}=e_*) \\
&=&\mathbf P(Y_{1}r_{0}^{\varepsilon _{2}+\varepsilon _{3}}Y_{2}\cdots
Y_{n}r_{0}^{\varepsilon _{2n}+\varepsilon _{1}}=e_*).
\end{eqnarray*}%
Using the Magnus embedding 
\[
\psi :\mathbf{F}/[N,N]\hookrightarrow \mathbb Z^r\wr \Gamma_1(N)
\]%
(and re-indexing of the $\varepsilon_i$) this yields
$$(\nu \ast \mu \ast \nu )^{\ast n}(e_*) \\
=\mathbf P(\overline{S}_{n}=\bar{e},\bar{a}(Y_{1}r_{0}^{\varepsilon _{1}+\varepsilon
_{2}}Y_{2}\cdots Y_{n}r_{0}^{\varepsilon _{2n-1}+\varepsilon _{2n}})=0).
$$
However, we have
\begin{eqnarray*}
\lefteqn{\bar{a}(Y_{1}r_{0}^{\varepsilon _{1}+
\varepsilon _{2}}Y_{2}\cdots Y_{n}r_{0}^{\varepsilon
_{2n-1}+\varepsilon _{2n}})} \hspace{1in}&& \\
&=&\bar{a}(S_{1}r_{0}^{\varepsilon _{1}+\varepsilon _{2}}S_{1}^{-1}\cdots
S_{n}r_{0}^{\varepsilon _{2n-1}+\varepsilon _{2n}}S_{n}^{-1} S_{n}) \\
&=&\sum_{j=1}^{n}(\varepsilon _{2j-1}+\varepsilon
_{2j})\bar{a}(S_{j}r_{0}S_{j}^{-1})+\bar{a}(S_{n}) \\
&=&\sum_{j=1}^{n}(\varepsilon _{2j-1}+\varepsilon _{2j})\tau
_{\overline{S}_{j}}\bar{a}(r_{0})+\bar{a}(S_{n}).
\end{eqnarray*}%
The last equality above 
from Lemma \ref{lem-alg-a}.
\end{proof}

\begin{proof}[Proof of Proposition \ref{Upper}]
By Lemma \ref{Return Expression}, 
\begin{eqnarray*}
\lefteqn{\left( \nu \ast \mu \ast \nu \right) ^{\ast n}(e_*)}\hspace{.5in}&& \\
&=&\mathbf P\left( \overline{S}_{n}=\bar{e},
\sum_{j=1}^{n}(\varepsilon _{2j-1}+\varepsilon
_{2j})\tau _{\overline{S}_{j}}\bar{a}(r_{0}))+\bar{a}(S_{n})=0\right) .
\end{eqnarray*}%
Under the assumption that $(\Gamma ,\mathbf{\rho })$ is an exclusive pair,
(ii) of Definition \ref{def-exclu} gives
\begin{eqnarray}
\lefteqn{\left\{ \sum_{j=1}^{n}(\varepsilon _{2j-1}+\varepsilon _{2j})\tau
_{\overline{S}_{j}}\bar{a}(r_{0})+\bar{a}(S_{n})=0\right\}} \hspace{.5in}&&
\nonumber\\
&&=\left\{ \sum_{j=1}^{n}(\varepsilon _{2j-1}+\varepsilon _{2j})\tau
_{\overline{S}_{j}}\bar{a}(r_{0})=0\right\}  
\cap \left\{ \bar{a}(S_{n})=0\right\}  \label{drop}  
\end{eqnarray}
Further, (i) of Definition \ref{def-exclu} gives
\begin{eqnarray*}
\lefteqn{\left\{ \sum_{j=1}^{n}(\varepsilon _{2j-1}+\varepsilon _{2j})\tau
_{\overline{S}_{j}}\bar{a}(r_{0})=0\right\}}\hspace{1in}&&\\ 
&= &\bigcap\limits_{x\in \overline{\Gamma }}\left\{
\sum_{j=1}^{n}(\varepsilon _{2j-1}+\varepsilon _{2j})
\mathbf{1}_{\{x\}}(\overline{S}_j)=0%
\right\} .
\end{eqnarray*}%
Therefore, dropping $\left\{ \bar{a}(S_{n})=0\right\} $ in (\ref{drop}) yields 
\begin{eqnarray*}
\lefteqn{\left( \nu \ast \varphi \ast \nu \right) ^{\ast n}(e_*)}
\hspace{.5in} \\
&\leq &
\mathbf P\left( \overline{S}_{n}=\bar{e},
\sum_{j=1}^{n}(\varepsilon _{2j-1}+\varepsilon
_{2j})\mathbf{1}_{\{x\}}(\overline{S}_j)=0\text{ for all }x\in \overline{\Gamma }%
\right) .
\end{eqnarray*}%
On the other hand, the return probability of the random walk on 
$$
\mathbb{Z}\wr \overline{\Gamma}<
\mathbb{Z}\wr \Gamma_1(N)$$ driven by $\eta \star \overline{\varphi }^{\prime }\star \eta $
is exactly%
\begin{eqnarray*}
\lefteqn{(\eta \star \overline{\varphi }\star \eta )^{\star n}(e_\star)
} \hspace{.5in}&& \\
&=&\mathbf P\left( \overline{S}_{n}=\bar{e},\sum_{j=1}^{n}(\varepsilon _{2j-1}+\varepsilon _{2j})%
\mathbf{1}_{\{x\}}(\overline{S}_j)=0\text{ for all }x\in \overline{\Gamma }\right) .
\end{eqnarray*}
\end{proof}

\section{Examples of two sided bounds on $\Phi_{\Gamma_2(N)}$}
\setcounter{equation}{0}

\subsection{The case of nilpotent groups} \label{sec-nil}
Our first application of the techniques developed above yields
the following Theorem.

\begin{theo} \label{theo-nil}
Assume that $\Gamma_1(N)=\mathbf F_r/N$ is an infinite nilpotent
group and $r\ge 2$. Let $D$ be the degree of polynomial 
volume growth of $\Gamma_1(N)$. Then
$$\Phi_{\Gamma_2(N)}(n)\simeq \exp\left(-n^{D/(2+D)}[\log n]^{2/(2+D)}\right).
$$
\end{theo}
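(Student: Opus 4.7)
The plan is to derive matching lower and upper bounds using the machinery already built in Sections 3 and 4.

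For the lower bound, I would simply invoke Theorem \ref{theo-fol} together with Example \ref{exa-pol}. Since $\Gamma_1(N)$ has polynomial volume growth of degree $D$, it admits a sequence of F\o lner couples adapted to $\mathcal{V}(k) \simeq k^D$ (e.g., balls and their concentric sub-balls). Applying Proposition \ref{pro-comp}(1) to compute $\gamma_{\mathcal{W}}$ with $\mathcal{W} = \exp(C \mathcal{V} \log \mathcal{V})$ yields $\gamma(t) \simeq \exp(t^{D/(D+2)} (\log t)^{2/(D+2)})$, and Theorem \ref{theo-fol} then gives the lower bound $\Phi_{\Gamma_2(N)}(n) \geq c \exp(-Cn^{D/(D+2)}(\log n)^{2/(D+2)})$.

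For the upper bound, I would combine the exclusive pair construction of Section \ref{sec-exexclu} with Theorem \ref{theo-Upper}. By Proposition \ref{pro-exclunil}, since $\Gamma_1(N)$ is infinite nilpotent and $r \geq 2$, there exists an exclusive pair $(\Gamma, \boldsymbol{\rho})$ in $\Gamma_2(N)$ such that $\overline{\Gamma} = \bar{\pi}(\Gamma)$ has finite index in $\Gamma_1(N)$. In particular, $\overline{\Gamma}$ is itself a finitely generated nilpotent group with polynomial volume growth of the same degree $D$. I would then take $\varphi$ to be the lazy simple random walk on $\Gamma$ with respect to a fixed finite symmetric generating set (for the explicit choice $\Gamma = \langle s_1^m, \dots, s_r^m \rangle$ provided by the proof of Proposition \ref{pro-exclunil}, one may use the generators $s_i^m$ themselves). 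Since each element in the support of $\nu * \varphi * \nu$ has bounded word-length in $\Gamma_2(N)$ with respect to the generators $s_1,\dots,s_r$, the Dirichlet form comparison (\ref{comp-exclu}) follows from standard arguments. Theorem \ref{theo-Upper} then yields $\mu^{*2kn}(e_*) \leq C q^{\star 2n}(e_\star)$, where $q = \eta \star \bar{\varphi} \star \eta$ is a switch-walk-switch measure on $\mathbb{Z} \wr \overline{\Gamma}$.

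The last ingredient is the known return probability estimate for switch-walk-switch random walks on $\mathbb{Z} \wr \overline{\Gamma}$ where $\overline{\Gamma}$ is a finitely generated nilpotent group of polynomial growth of degree $D$. By \cite{Pittet2002} (or its refinements), this return probability satisfies $q^{\star 2n}(e_\star) \simeq \exp(-n^{D/(D+2)}(\log n)^{2/(D+2)})$. Combined with the previous step, this gives the matching upper bound $\mu^{*2kn}(e_*) \leq C\exp(-cn^{D/(D+2)}(\log n)^{2/(D+2)})$, completing the proof.

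The main obstacle, in my view, is less conceptual than technical: one must verify carefully that the subgroup $\Gamma$ produced by Proposition \ref{pro-exclunil} is finitely generated with generators of bounded word-length in $\Gamma_2(N)$ (so that the Dirichlet form comparison is quantitative and uniform), and that the return probability asymptotics for $\mathbb{Z} \wr \overline{\Gamma}$ indeed transfer from the standard $\mathbb{Z} \wr \mathbb{Z}^D$ case when $\overline{\Gamma}$ is merely virtually isomorphic to a nilpotent group of growth $D$. Both points are standard but deserve explicit verification to ensure the constants match up and the $\simeq$ equivalence genuinely passes through each step.
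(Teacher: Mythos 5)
Your proposal follows essentially the same route as the paper: lower bound via Theorem \ref{theo-fol}/Example \ref{exa-pol}, upper bound via the exclusive pair of Proposition \ref{pro-exclunil} plugged into Theorem \ref{theo-Upper}, reducing the problem to $\Phi_{\mathbb Z \wr \overline{\Gamma}}$ for $\overline{\Gamma}$ a finite-index nilpotent subgroup. The one substantive addition in your write-up — spelling out why the Dirichlet form comparison (\ref{comp-exclu}) holds for the lazy walk on $\Gamma = \langle s_1^m,\dots,s_r^m\rangle$, namely that $\nu * \varphi * \nu$ is symmetric, finitely supported, and supported on elements of bounded word-length — is a useful explicit verification that the paper leaves implicit; this is exactly the "standard argument" needed. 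One small correction: the sharp upper estimate $\Phi_{\mathbb Z\wr \overline{\Gamma}}(n)\leq \exp(-c\,n^{D/(D+2)}(\log n)^{2/(D+2)})$, with the logarithmic factor coming from the $\mathbb Z$ (rather than finite) lamp group, is taken from \cite[Theorem 2]{Erschler2006}, not from \cite{Pittet2002}; the latter would give a bound without the log correction for finite lamps. The fact that it applies when $\overline{\Gamma}$ is any finitely generated group of polynomial volume growth of degree $D$ (not just $\mathbb Z^D$) is exactly what Erschler's F\o lner-couple formulation of the upper bound provides, so no separate "transfer" argument is needed beyond noting that finite-index subgroups share the same growth degree.
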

\begin{proof}Example \ref{exa-pol} provides the desired lower bound.
By Proposition \ref{pro-exclunil}, we have an exclusive pair 
$(\Gamma,\boldsymbol \rho)$ in $\Gamma_2(N)$ such that 
$\overline{\Gamma}=\bar{\pi}(\Gamma)$ is of finite index in $\Gamma_1(N)$. 
Applying Theorem \ref{theo-Upper} gives
$$\Phi_{\Gamma_2(N)}(kn) \le C \Phi_{\mathbb Z\wr \overline{\Gamma}}(n).$$
 Since $\overline{\Gamma}$ has finite index in $\Gamma_1(N)$, it has the same 
volume growth degree $D$ and, by \cite[Theorem 2]{Erschler2006},
$$\Phi_{\mathbb Z\wr \overline{\Gamma}}(n)\le 
\exp\left(- cn^{D/(2+D)}[\log n]^{2/(2+D)}\right).$$
\end{proof}

\subsection{Application to the free metabelian groups}\label{sec-meta}

This section is devoted to the 
free metabelian group $\mathbf S_{2,r}
=\mathbf{F}/[N,N]$, 
$N=[\mathbf F_r,\mathbf F_r]$. 
\begin{theo} \label{theo-meta}
The free metabelian group $\mathbf S_{2,r}$ satisfies
\begin{equation}\label{meta1}
\Phi_{\mathbf S_{2,r}}(n)\simeq 
\exp\left(- n^{r/(2+r)}[\log n]^{2/(2+r)}\right) 
\end{equation}
and, for any $\alpha\in (0,2)$,
\begin{equation}\label{meta2}
\widetilde{\Phi}_{\mathbf S_{2,r},\rho_\alpha}(n)\simeq 
\exp\left(- n^{r/(\alpha+r)}[\log n]^{\alpha/(\alpha+r)}\right).
\end{equation}
Further, for $a=(\alpha_1,\dots,\alpha_r)\in (0,2)^r$, let
$\boldsymbol{\mu}_a$ be defined by {\em (\ref{pushforward})}
with $p_i(m)=c_i(1+|m|)^{-1-\alpha_i}$. Let
$\mu_a$ be the probability measure on $\mathbf S_{2,r}$ associated 
to $\boldsymbol{\mu}_a$ by {\em (\ref{defmu})}.  Then we have
\begin{equation}\label{meta3}
\mu_a^{n}(e)\simeq 
\exp\left( -  n^{r/(r+\alpha)}[\log n]^{\alpha/(r+\alpha)}\right)
\end{equation}
where
$$\frac{1}{\alpha}=\frac{1}{r}\left(\frac{1}{\alpha_1}+\dots+
\frac{1}{\alpha_r}\right).$$
\end{theo}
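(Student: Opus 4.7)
My plan is as follows. Statement (meta1) arises as a direct specialization of Theorem \ref{theo-nil}: the group $\Gamma_1(N)=\mathbb{Z}^r$ (with $N=[\mathbf F_r,\mathbf F_r]$) is abelian, hence nilpotent, of polynomial volume growth degree $D=r$, and $r\ge 2$. The lower bound side of \ref{theo-nil} uses Example \ref{exa-pol} (built on the F{\o}lner couple construction of Theorem \ref{theo-fol}) and the upper bound side uses the exclusive pair furnished by Proposition \ref{pro-exclunil}, both readily available in this setting.

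For (meta3), the lower bound is precisely Corollary \ref{cor-low}. To obtain the matching upper bound I would use the exclusive pair $(\Gamma,\boldsymbol{\rho})$ of Example \ref{meta-exclu}, namely $\Gamma=\langle s_1^2,\ldots,s_r^2\rangle$ and $\boldsymbol{\rho}=[\mathbf s_1,\mathbf s_2]$, together with the symmetric probability measure $\varphi$ on $\Gamma$ defined by
$$\varphi(g)=\frac{1}{r}\sum_{i=1}^r\sum_{m\in\mathbb Z}p_{\alpha_i}(m)\,\mathbf{1}_{\{s_i^{2m}\}}(g).$$
The Dirichlet form comparison $\mathcal E_{\nu*\varphi*\nu}\le C_0\,\mathcal E_{\mu_a}$ required by Theorem \ref{theo-Upper} is then routine: each $s_i^{2m}$ in $\mathrm{supp}(\varphi)$ appears in $\mathrm{supp}(\mu_a)$ with comparable weight (since $(1+|2m|)^{-1-\alpha_i}\asymp(1+|m|)^{-1-\alpha_i}$), while $\rho^{\pm1}$ is a length-four word in the generators and therefore factors as a bounded product of $\mu_a$-moves. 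Theorem \ref{theo-Upper} then yields $\mu_a^{*2kn}(e_*)\le C\,q^{\star 2n}(e_\star)$ on $\mathbb Z\wr\bar\Gamma$, where $\bar\Gamma=2\mathbb Z^r\cong\mathbb Z^r$ and $q=\eta\star\bar\varphi\star\eta$.

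The remaining step is to control $q^{\star n}(e_\star)$. Here $\bar\varphi$ is a measure on $\bar\Gamma\cong\mathbb Z^r$ whose $i$-th coordinate has a symmetric power-law step distribution with exponent $\alpha_i$. The anisotropic switch-walk-switch asymptotics on $\mathbb Z\wr\mathbb Z^r$ established in \cite{SCZ-dv1} give
$$q^{\star n}(e_\star)\simeq\exp\bigl(-n^{r/(r+\alpha)}[\log n]^{\alpha/(r+\alpha)}\bigr),\qquad \frac{1}{\alpha}=\frac{1}{r}\sum_{i=1}^r\frac{1}{\alpha_i},$$
which matches the lower bound of Corollary \ref{cor-low} and proves (meta3). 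Finally, (meta2) follows from (meta3) by specializing $a=(\alpha,\ldots,\alpha)$. Indeed, since the abelianization map $\mathbf S_{2,r}\to\mathbb Z^r$ is $1$-Lipschitz, one has $|s_i^m|_{\mathbf S_{2,r}}\ge|m|$, so the tail of $|g|$ under $\mu_a$ is controlled by that of $|m|$ under $p_\alpha$; in particular $W(\rho_\alpha,\mu_a)<\infty$. The defining property of $\widetilde\Phi_{\mathbf S_{2,r},\rho_\alpha}$ then transfers the upper bound of (meta3) to
$$\widetilde\Phi_{\mathbf S_{2,r},\rho_\alpha}(n)\le C\exp\bigl(-cn^{r/(r+\alpha)}[\log n]^{\alpha/(r+\alpha)}\bigr),$$
while the matching lower bound was already noted just after Theorem \ref{th-freesola} (combining (meta1) with \cite[Theorem 3.3]{Bendikov}). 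The main technical obstacle of the whole plan is importing the anisotropic wreath-product asymptotics of \cite{SCZ-dv1}; once this input is secured, everything else is an assembly of the machinery developed in Sections \ref{sec-low} and \ref{sec-up}.
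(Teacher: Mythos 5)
Your proposal follows essentially the same route as the paper: lower bounds from Theorem \ref{theo-fol}/Example \ref{exa-pol}, \cite[Theorem 3.3]{Bendikov}, and Corollary \ref{cor-low}; upper bound for (\ref{meta3}) via the exclusive pair $(\langle s_1^2,\dots,s_r^2\rangle,[\mathbf s_1,\mathbf s_2])$ of Example \ref{meta-exclu}, the measure $\varphi$ you write, the Dirichlet form comparison, Proposition \ref{Upper}, and the \cite{SCZ-dv1} asymptotics on $\mathbb Z\wr\mathbb Z^r$; then (\ref{meta2}) by specializing $a=(\alpha,\dots,\alpha)$ and checking the weak $\rho_\alpha$-moment. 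Your shortcut for (\ref{meta1}) — simply invoking Theorem \ref{theo-nil} with $\Gamma_1(N)=\mathbb Z^r$, $D=r$ — is valid and equivalent, since Theorem \ref{theo-nil} is itself proved by the same exclusive-pair machinery. One small slip worth fixing: to verify $W(\rho_\alpha,\mu_a)<\infty$ you need the \emph{upper} bound $|s_i^m|_{\mathbf S_{2,r}}\le |m|$ (so that $\mu_a(\{|g|>t\})\le \frac1r\sum_i p_i(\{|m|>t\})\lesssim t^{-\alpha}$), not the lower bound $|s_i^m|\ge |m|$ that the $1$-Lipschitz abelianization gives; of course $|s_i^m|=|m|$, so the conclusion stands, but the inequality you quote points the wrong way.
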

\begin{proof} The lower bound in (\ref{meta1}) follows from Theorem 
\ref{theo-fol} 
(in particular, Example \ref{exa-pol}). The lower bound in (\ref{meta2}) 
then follows from \cite[Theorem 3.3]{Bendikov}. The lower bound in (\ref{meta3})
is Corollary \ref{cor-low}. If we consider the measure $\mu_a$ with $a=(\alpha,\alpha,\dots,\alpha)$, $\alpha\in (0,2)$, it is easy to check that this measure
satisfies
$$\sup_{s>0}\left\{ s\mu_a( g: (1+|g|)^{\alpha}> s)\right\} <\infty,$$
that is, has finite weak $\rho_\alpha$-moment with $\rho_\alpha(g)=(1+|g|)^
\alpha$. This implies that $\mu^{(2n)}_a(e)$ provides an upper bound 
for $\widetilde{\Phi}_{\mathbf S_{2,r},\rho_\alpha}(n)$. See the definifition
of $\widetilde{\Phi}_{G,\rho}$ in Section \ref{sec-otherinv} and \cite{Bendikov}.
The upper bound in (\ref{meta2}) is thus a consequence of the upper bound in
(\ref{meta3}).

We are left with proving the upper bounds contained in 
(\ref{meta1})-(\ref{meta3}).  The proofs follow the same line of reasoning
and we focus on the upper bound (\ref{meta3}).

\begin{lem}
Set $\Gamma =\left\langle
s_{1}^{2},...,s_{r}^{2}\right\rangle < \mathbf S_{2,r}$ 
and $\boldsymbol{\rho }=[\mathbf{s}_{1},%
\mathbf{s}_{2}]\in \mathbf F_r$. The pair $(\Gamma,\boldsymbol \rho)$ 
is an exclusive pair in the sense of {\em Definition \ref{def-exclu}}
\end{lem}
\begin{proof}This was already observed in Example \ref{meta-exclu}.\end{proof}

In order to apply Proposition \ref{Upper} to the pair $(\Gamma,\boldsymbol \rho)$, we now define an appropriate measure 
$\varphi$ on the subgroup $\Gamma=\langle s_1^2,\dots, s_r^2\rangle$ 
of $\mathbf S_{2,r}=\mathbf F_r/[N,N]=\Gamma_2(N)$, 
$N=[\mathbf F_r,\mathbf F_r]$. In this context, $\overline{\Gamma}=(2\mathbb Z)^r\subset \mathbb Z^r=\Gamma_1(N)$. The measure $\varphi$ is simply given by 
\[
\varphi(g)=\sum_{i=1}^{r}\frac{1}{r}\sum_{m\in 
\mathbb{Z}
}c_i(1+|m|)^{-1-\alpha_i}\mathbf{1}_{\{s_{i}^{2m}\}}(g). 
\]%
With this definition, it is  clear that, on $\mathbf S_{2,r}$, we have the Dirichlet 
form comparison
\[
\mathcal{E}_{\mu _{a}}\ge c \mathcal{E}_{\nu \ast \varphi \ast
\nu }. 
\]%
Then by Proposition \ref{Upper},%
\[
\mu _{a}^{\ast n}(e_*)\text{\ }\preceq (\eta \star \overline{\varphi }\star \eta )^{\star n}(e_\star). 
\]%
Here as in the previous section, $*$ denotes convolution in 
$\Gamma_2(N)$ and $\star$ denotes convolution on  
$\mathbb Z\wr \overline{\Gamma} $ (or $\mathbb Z\wr \Gamma_1(N)$). 
Here, $\overline{\Gamma}=(2\mathbb Z)^r$ 
which is a subgroup of (but also isomoprhic to) $\Gamma_1(N)=\mathbb Z^r$. 
The switch-walk-switch measure $q=\eta\star \overline{\varphi}\star\eta$ 
on $\mathbb Z\wr (2\mathbb Z)^r$ has been studied by the authors in 
\cite{SCZ-dv1} where it is proved that
\[
q^{\ast n}(e)\leq \exp \left( - c n^{\frac{r}{r+\alpha }}(\log n)^{%
\frac{\alpha }{r+\alpha }}\right) . 
\]
The proof of this result given in \cite{SCZ-dv1} is based on an extension of 
the Donsker-Varadhan Theorem regarding the  Laplace transform of the number 
of visited points.  This extension treats random walks on $\mathbb Z^r$ driven by
measures that are in the domain of normal attraction of an operator stable law.
See \cite[Theorem 1.3]{SCZ-dv1}. \end{proof}

\subsection{Miscellaneous applications}\label{sec-misc}
This section describes further examples of the
results of Sections \ref{sec-exclu}--\ref{sec-up}. Namely, we consider a number 
of examples consisting of a group $G=\Gamma_1(N)=\mathbf F_r/N$ 
given by an explicit presentation. We identify an exclusive pair 
$(\Gamma,\mathbf \rho)$
with the property that the subgroup $\overline{\Gamma}$ of $\Gamma_1(N)$ 
is either isomorphic to $\Gamma_1(N)$ or has a similar structure so that
$\Phi_{\mathbb Z^d\wr \Gamma_1(N)}\simeq \Phi_{\mathbb Z\wr \overline{\Gamma}}$.
In  each of these examples, the results of Sections 
\ref{sec-low1}-\ref{sec-low2} and those of Section \ref{sec-exclu}-\ref{sec-up}
provide matching lower and upper bounds for $\Phi_{\Gamma_2(N)}$ where $\Gamma_2(N)=\mathbf F_2/[N,N]$.

\begin{exa}[The lamplighter {$\mathbb Z_2\wr \mathbb Z=
\langle a,t\,|\, a^2,[a,t^{-n}at^n],n\in \mathbb Z\rangle$}] In the lamplighter description of $\mathbb Z_2\wr \mathbb Z$, multiplying by  $t$ on the right produces a 
translation of the lamplighter by one unit. Multiplying by $a$ on the right switch the light at the current position of the lamplighter. Let $\Gamma$ 
be the subgroup of $\Gamma_2$ generated by the images of $a$ and $t^2$ and note that $\overline{\Gamma}$ is, in fact, isomorphic to $\Gamma_1(N)$. 
Let $\boldsymbol \rho= [a, t^{-1}at]=  a^{-1} t^{-1}a^{-1}t at^{-1} a t$.
In order to apply Lemma \ref{lem-exclu}, set $\mathbf u= a^{-1} t^{-1}a^{-1}t at^{-1}$, $\mathbf s=a$ and  $\mathbf v=t$ so that $\boldsymbol \rho=
\mathbf u \mathbf s \mathbf v$. By inspection, 
$\mathfrak f_{\boldsymbol \rho}((\bar{u},\overline{us},\mathbf s))\neq 0$ 
(condition (1) of Lemma \ref{lem-exclu}). 
Also, because the elements of $\overline{\Gamma}$ can only have lamps on and the lamplighter at even positions, 
one checks that $\mathfrak f_{\boldsymbol \rho }((x\bar{u},x\overline{us},\mathbf s))=0$ if $x\in \overline{\Gamma}$ (condition (2) of Lemma \ref{lem-exclu}).  
For the same reason, it is clear that
$\mathfrak f_x((\bar{u},\overline{us},\mathbf s))=0$ if 
$x\in \overline{\Gamma}$, 
that is, $(\bar{u},\overline{us},\mathbf s)\not\in U$ (condition (3) 
of Lemma \ref{lem-exclu}).   By the Magnus embedding and 
\cite[Theorem 1.3]{Pittet2000}, we have
$$\Phi_{\Gamma_2(N)}(n)\ge c \Phi_{\mathbb Z^r\wr \Gamma_1(N)}(kn).$$
Applying Lemma \ref{lem-exclu}, Proposition \ref{Upper}, and the fact that 
$\overline{\Gamma}\simeq \Gamma_1(N)$, yields
$$\Phi_{\Gamma_2(N)}(kn)\le C \Phi_{\mathbb Z \wr \Gamma_1(N)}(n).$$
The results of \cite{Erschler2006} gives
$$\Phi_{\mathbb Z^r\wr \Gamma_1(N)}(n)\simeq 
\Phi_{\mathbb Z\wr \Gamma_1(N)}(n)\simeq \exp\left(- n/[\log n]^2\right).$$
Hence we conclude that, in the present case where 
$$\Gamma_1(N)= \mathbb Z_2\wr \mathbb Z=
\langle a,t\,|\, a^2,[a,t^{-n}at^n],n\in \mathbb Z\rangle,$$
we have
$$\Phi_{\Gamma_2(N)}(n)\simeq \exp\left(- n/[\log n]^2\right).$$
This extend immediately to 
$\mathbb Z_q\wr \mathbb Z= \langle a,t\,|\, a^q,[a,t^{-n}at^n],n\in 
\mathbb Z\rangle$. It also extend to similar presentations of 
$F\wr \mathbb Z$ with $F$ finite. See the next class of examples.
\end{exa}

\begin{exa}[Examples of the type {$K\wr \mathbb Z^d$}]
Let $K=\langle \mathbf k _1,\dots,\mathbf k_m\,|\, N_K\rangle $ 
be a $m$ generated group. The wreath product $K\wr \mathbb Z^d$ admits the 
presentation $\mathbf F_r/N$  with $r=m+d$ generators denoted by 
$$\mathbf k_1,\dots,\mathbf k_m,\mathbf t_1,\dots,\mathbf t_d$$
and relations
$[\mathbf t_i,\mathbf t_j], 1\le i,j\le d$, $ N_K$ and
$$[\mathbf k',
\mathbf t^{-1} \mathbf k \mathbf t], \mathbf k,\mathbf k'\in \mathbf F(\mathbf k_1,\dots,\mathbf k_m), \mathbf t= \mathbf t_1^{x_1}\cdots \mathbf t_d^{x_d},\;(x_1,\dots,x_d)\neq 0
.$$
Without loss of generality, we can assume that the image of $\mathbf k_1$ 
in $K$ is not trivial. Let $\Gamma$ be the subgroup of $\Gamma_2(N)$ generated 
by the images of $\mathbf t_i^2$, $1\le i\le d$. Let 
$$\boldsymbol \rho =[\mathbf k_1, \mathbf t_1^{-1}\mathbf k_1 \mathbf t_1]$$
and write
$$\boldsymbol \rho = \mathbf u \mathbf s \mathbf v 
\mbox{ with } \mathbf u= \boldsymbol\rho \mathbf t_1^{-1} \mathbf k_1^{-1}, \mathbf s=\mathbf k_1, \mathbf v=\mathbf t_1.$$
As in the previous example, $(\Gamma, \boldsymbol \rho)$ is an exclusive pair 
and $\overline{\Gamma}$ is in fact isomorphic to $\Gamma_1(N)$. By the 
same token, it follows that
$$\Phi_{\Gamma_2(N)}(n)\ge c \Phi_{\mathbb Z^r\wr \Gamma_1(N)}(kn)
\mbox{ and }\Phi_{\Gamma_2(N)}(kn)\le C \Phi_{\mathbb Z \wr \Gamma_1(N)}(n).$$

Now, thanks to the results of \cite{Erschler2006} concerning wreath products, 
we obtain
\begin{itemize}
\item If $K$ is a non-trivial finite group then 
$$\Phi_{\Gamma_2(N)}(n)\simeq \exp\left(-n/[\log n]^{2/d}\right).$$
\item If $K$ is not finite but has polynomial volume growth then
$$\Phi_{\Gamma_2(N)}(n)\simeq \exp\left(-n \left(\frac{\log \log n}{\log n}\right)^{2/d}\right).$$
\item If $K$ is polycyclic with exponential volume growth then
$$\Phi_{\Gamma_2(N)}(n)\simeq \exp\left(-n/[\log \log n]^{2/(d+1)}\right)$$
\end{itemize}
In particular, when $\Gamma_1(N)=\mathbb Z\wr \mathbb Z$ with presentation
$\langle a,t\mid \lbrack a,t^{-n}at^{n}],n\in 
\mathbb{Z}\rangle $ we obtain that
$$\Phi_{\Gamma_2(N)}(n)\simeq 
\exp \left( - n\left(\frac{\log \log n}{\log n}\right)^{2}\right) . $$
\end{exa}

\begin{exa}[The Baumslag-Solitar group {$\mbox{BS}(1,q)$}] Consider the 
presentation
\[
\mbox{BS}(1,q)=\Gamma_1(N)=\mathbf F_2/N
=<a,b\mid a^{-1}ba=b^{q}> 
\]%
with $q>1$.  In order to apply Proposition \ref{Upper}, let 
$\Gamma$ be the group generated by the image of $a^2$ and $b$ in 
$\Gamma_2(N)$. Let $\boldsymbol \rho= b^{-q}a^{-1}ba$, 
$\mathbf u= b^{-q}a^{-1}$, $\mathbf s=b$, $\mathbf v=a$. One checks that 
$(\Gamma,\boldsymbol \rho)$ is an exclusive pair and that 
$\overline{\Gamma}\simeq \mbox{BS}(1,q^2)$. After some computation, we obtain
\[
\Phi _{\Gamma_2(N)}(n)\simeq \exp \left( - n/[\log n]^2
\right) . 
\]
\end{exa}

\begin{exa}[Polycyclic groups] Let $G$ be a polycyclic group with 
polycyclic series $G=G_1 \rhd G_2\rhd \dots\rhd G_{r+1}=\{e\}$, $r\ge 2$.
For each $i$, $1\le i\le r$, let $a_i$ be an element in $G_i$ whose 
projection in $G_i/G_{i+1}$ generates that group.  Write $G=\mathbf F_r/N$
where $\mathbf s_i$ is sent to $a_i$. This corresponds to the
standard polycyclic presentation of $G$ relative to $a_1,\dots, a_n$ and
$N$ contains a word of the form
$$\boldsymbol \rho= \mathbf s_1^{-1}\mathbf s_2 \mathbf s_1 
\mathbf s_r^{\alpha_r}\cdots \mathbf s_2^{\alpha_2}$$
where $\alpha_\ell$, $2\le \ell\le r$ are integers. 
See \cite[page 395]{Sims}.

\begin{theo} \label{th-polyc}
Let $G=\Gamma_1(N)$ be an infinite polycyclic group equipped with a polycyclic 
presentation as above with at least two generators. 
\begin{itemize}
\item If $G$ has polynomial volume growth of degree $D$, then
$$\Phi_{\Gamma_2(N)}(n)\simeq \exp\left(- n^{D/(2+D)} [\log n]^{2/(2+D)}\right).$$\item If $G$ has exponential volume growth then
$$\Phi_{\Gamma_2(N)}(n)\simeq \exp\left(- n/[\log n]^2\right).$$
\end{itemize}
\end{theo}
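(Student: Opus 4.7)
The proof follows the template established by the preceding examples (lamplighter, $K\wr\mathbb Z^d$, $\mathrm{BS}(1,q)$): the Magnus embedding reduces $\Phi_{\Gamma_2(N)}$ to return probabilities on wreath products whose base is closely related to $\Gamma_1(N)$. Because polycyclic groups are residually finite (Hirsch) and possess explicit F\o lner sets, both halves of the machinery from Sections~3--4 apply.

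For the lower bound, Theorem~\ref{theo-fol} combined with the F\o lner couples of polycyclic groups suffices. When $\Gamma_1(N)$ has polynomial growth of degree $D$, we use $\mathcal V(n)\simeq n^D$ (balls in a virtually nilpotent group) and Proposition~\ref{pro-comp}(1) to obtain the lower bound $\exp(-cn^{D/(2+D)}[\log n]^{2/(2+D)})$. When $\Gamma_1(N)$ has exponential growth, we use $\mathcal V(n)\simeq e^n$ (standard in polycyclic random walk theory) and Proposition~\ref{pro-comp}(2) with $\alpha=1$, $\ell\equiv 1$, which delivers the lower bound $\exp(-cn/[\log n]^2)$.

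For the upper bound, I will construct an exclusive pair $(\Gamma,\boldsymbol\rho)$ from the standard polycyclic relation $\boldsymbol\rho=\mathbf s_1^{-1}\mathbf s_2\mathbf s_1\,\mathbf s_r^{\alpha_r}\cdots\mathbf s_2^{\alpha_2}$ displayed in the excerpt. Write $\boldsymbol\rho=\mathbf u\mathbf s\mathbf v$ with $\mathbf u=\mathbf s_1^{-1}$, $\mathbf s=\mathbf s_2$. Condition~(1) of Lemma~\ref{lem-exclu}, namely $\mathfrak f_{\boldsymbol\rho}((\bar u,\bar u\bar s,\mathbf s))\neq 0$, is equivalent to $\boldsymbol\rho\notin[N,N]$ and holds because this relation is genuine in any standard polycyclic presentation (it produces a nontrivial image in the free metabelian quotient). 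Residual finiteness of $\Gamma_1(N)$ then lets me apply Proposition~\ref{pro-resf} once I exhibit $m=(m_1,\dots,m_r)\in\mathbb N^r$ separating $\pi_{T_m}(\mathbf s_1)$ from $\langle\pi_{T_m}(\mathbf s_2)\rangle$. The generic case, in which $\bar s_1$ and $\bar s_2$ have independent nontrivial images in some finite quotient of $G^{\mathrm{ab}}$, is immediate; the degenerate case where $G$ is virtually $\mathbb Z$ (so $G^{\mathrm{ab}}$ has rank $1$) is handled separately, as at the end of the proof of Proposition~\ref{pro-exclunil}, by taking $\boldsymbol\rho$ of minimal length in $N$ and $\Gamma=\langle s_{i_1}^m\rangle$ for a suitable non-torsion generator. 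As in Proposition~\ref{pro-exclunil}, $\overline\Gamma=\bar{\pi}(\Gamma)$ is then a finite-index subgroup of $\Gamma_1(N)$, hence polycyclic of the same growth type.

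Theorem~\ref{theo-Upper} now gives $\Phi_{\Gamma_2(N)}(kn)\le C\,\Phi_{\mathbb Z\wr\overline\Gamma}(n)$, and the matching upper bounds come from \cite[Theorem~2]{Erschler2006}: $\exp(-cn^{D/(2+D)}[\log n]^{2/(2+D)})$ when $\overline\Gamma$ has polynomial growth of degree $D$, and $\exp(-cn/[\log n]^2)$ when $\overline\Gamma$ is polycyclic of exponential growth. The main obstacle I anticipate is checking that the chosen polycyclic relation really produces an exclusive pair for all polycyclic presentations: one must verify both $\boldsymbol\rho\notin[N,N]$ and the separation hypothesis in a finite abelian quotient $T_m$, and the degenerate presentations (notably $G$ virtually $\mathbb Z$, where one cannot pick two independent generators in the abelianization) demand the alternative construction just indicated.
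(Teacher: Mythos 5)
Your overall architecture matches the paper's: Magnus embedding plus F\o lner-couple lower bound, exclusive pair plus $\mathbb Z\wr\overline\Gamma$ upper bound, with the polycyclic relator $\boldsymbol\rho=\mathbf s_1^{-1}\mathbf s_2\mathbf s_1\mathbf s_r^{\alpha_r}\cdots\mathbf s_2^{\alpha_2}$ and $\mathbf u=\mathbf s_1^{-1}$, $\mathbf s=\mathbf s_2$. The lower bound argument is the same as the paper's. The upper bound, however, you build in a genuinely different way, and that is where a gap appears.

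The paper does \emph{not} invoke residual finiteness or Proposition~\ref{pro-resf} here. Instead it chooses $\Gamma=\langle s_2,\dots,s_r\rangle$ when $G_1/G_2$ is finite, or $\Gamma=\langle s_1^2,s_2,\dots,s_r\rangle$ when $G_1/G_2\simeq\mathbb Z$, and checks conditions (2) and (3) of Lemma~\ref{lem-exclu} by a direct computation modulo $G_2$: the only two prefixes of $\boldsymbol\rho$ that can land in $\bar s_1^{-1}G_2$ are $\mathbf s_1^{-1}$ and $\mathbf s_1^{-1}\mathbf s_2$, and one checks by hand that neither produces a contribution for $x\neq\bar e$. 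In both cases $\overline\Gamma$ is manifestly of finite index ($G_2$, respectively index two). By contrast, your route through Proposition~\ref{pro-resf} yields an exclusive pair of the form $(H_{m'},\boldsymbol\rho)$ with $\overline{H_{m'}}=\langle\bar s_1^{m'_1},\dots,\bar s_r^{m'_r}\rangle$. You then assert, citing Proposition~\ref{pro-exclunil}, that this subgroup has finite index in $\Gamma_1(N)$. That proposition proves finite index only for \emph{nilpotent} $\Gamma_1(N)$, via an explicit induction on nilpotency class using the commutator identity $[s_{i_c}[\dots[s_{i_2},s_{i_1}]]]^{m^c}=[s_{i_c}^m[\dots[s_{i_2}^m,s_{i_1}^m]]]$; that argument does not carry over to general polycyclic groups. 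The claim is in fact true for polycyclic groups (a Hirsch-length induction works: $H\cap G_2\supseteq\langle s_2^{m'_2},\dots,s_r^{m'_r}\rangle$ has full Hirsch length in $G_2$ by induction, and $H/(H\cap G_2)$ has full Hirsch length in $G_1/G_2$, so $H$ has full Hirsch length, hence finite index), but you would need to supply that argument; it is not ``as in Proposition~\ref{pro-exclunil}.''

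Two smaller inaccuracies: condition~(1) of Lemma~\ref{lem-exclu} is \emph{not} equivalent to $\boldsymbol\rho\notin[N,N]$ --- the latter only guarantees the flow is nonzero at \emph{some} edge, not at the chosen edge $(\bar s_1^{-1},\bar s_1^{-1}\bar s_2,\mathbf s_2)$. (It does hold here: the path of $\boldsymbol\rho$ leaves the coset $\bar s_1^{-1}G_2$ after its second step and never returns, so that edge is crossed exactly once; but you must say this.) Also, the relevant degenerate/non-degenerate split in the paper is whether $G_1/G_2$ is finite or infinite cyclic, which is not the same as ``$G$ virtually $\mathbb Z$''; keying your dichotomy to the latter will misclassify, e.g., $G=\mathbb Z^2\rtimes\mathbb Z_2$ where $G_1/G_2=\mathbb Z_2$ is finite but $G$ is not virtually $\mathbb Z$.
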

\begin{proof}
Our first step is to construct an exclusive pair $(\Gamma,\boldsymbol \rho)$ with
$\overline{\Gamma}=\bar{\pi}(\Gamma)$ of finite index in $\Gamma_1(N)$.

Assume first that $G_1/G_2$ is finite. In this case, let
$\Gamma=\langle s_2,\dots, s_r\rangle$. Assume that $x\in \Gamma$ is such that
$\mathfrak f_{\boldsymbol \rho}((\bar{x}\bar{s}_1^{-1},
\bar{x}\bar{s}_1^{-1}\bar{s}_2,\mathbf s_2))\neq 0$. 
Then there must be a prefix $\mathbf u$ of $\boldsymbol \rho$ such that
$\pi(\mathbf u)=\bar{x}\bar{s}_1^{-1}$. Computing modulo $\pi(\Gamma)=G_2$, 
the only prefixes of $\boldsymbol \rho$ that can have this property are 
$\mathbf s_1^{-1}$ and $\mathbf s_1^{-1}\mathbf s_2$.  If $\mathbf u=\mathbf s_1^{-1}$
then $\bar{x}$ is the identity. If $\mathbf u=\mathbf s_1^{-1}\mathbf s_2$
then  $\mathbf s_1^{-1} \mathbf s_2^2$ is not a prefix of $\boldsymbol \rho$ 
and $\mathfrak f_{\boldsymbol \rho} ((\bar{x}\bar{s}_1^{-1},
\bar{x}\bar{s}_1^{-1}\bar{s}_2,\mathbf s_2))= 0$, a contracdiction. 
It follows that condition 2 of Lemma \ref{lem-exclu} is satisfied.
In this case, it is obvious that condition 3 holds as well. Further,
$\pi(\Gamma)=G_2$ is a  subgroup of finite index in $G=\Gamma_1(N)$.  

In the case when $G_1/G_2\simeq \mathbb Z$, set
$$\Gamma= \langle s_1^2, s_2,\dots, s_r\rangle < \Gamma_2(N).$$
The same argument as used in the case when $G_1/G_2$ is finite apply 
to see that
condition 2 of Lemma \ref{lem-exclu} is satisfied. To check that condition 3 of Lemma \ref{lem-exclu} is satisfied, observe that, 
if $\mathfrak f_g((\bar{y},\bar{y}\bar{s}_i,\mathbf s_i))\neq 0$ 
with $2\le i\le r$ and $g\in \Gamma$ then
$\bar{y}$ must belong to $\overline{\Gamma}$. But, by construction 
$\bar{s}_1^{-1}\not\in \overline{\Gamma}$. Therefore $\mathfrak f_g(\bar{s}_1^{-1}, \bar{s}_1^{-1}\bar{s}_2, \mathbf s_2)=0$ for every $g\in \Gamma$. Finally, 
$\overline{\Gamma}$ is obviously of index $2$ in $\Gamma_1(N)$.

By the Magnus embedding we have $ c\Phi_{\mathbb Z^r\wr \Gamma_1(N)}(kn)\le \Phi_{\Gamma_2(N)}(n)$.
By Theorem \ref{Upper} and the existence of the exclusive pair $(\Gamma,\rho)$ exhibited above, 
we also have
$c\Phi_{\Gamma_2(N)}(kn)\le \Phi_{\mathbb Z \wr \overline{\Gamma}}(n)$ with 
$\overline{\Gamma}$ of finite index in $\Gamma_1(N)$. Because $\Gamma_1(N)$ is infinite polycyclic,
the desired result follows from the known results about wreath products. See \cite{Erschler2006}.

\end{proof}
\end{exa}

\section{Iterated comparison and $\mathbf S_{d,r}$ with $d>2$}
\setcounter{equation}{0}

Let $\mathbf F_r/N=\Gamma_1(N)$ be a given presentation. Write $N^{(2)}=[N,N]$
and $N^{(\ell)}=[N^{(\ell-1)},N^{(\ell-1)}]$, $\ell>2$.  The goal of this 
section is to obtain bounds on the probability of return for random walks on 
$\Gamma_\ell(N)=\mathbf F_r/N^{(\ell)}$. Our approach is to iterate the method
developped in the previous sections in the study of random walks on 
$\Gamma_2(N)$.

We need to fix some notation. We will use $*=*_\ell$ to denote convolution in 
$\Gamma_{\ell}(N)$. In general, $\ell$ will be fixed so that 
there will be no need to distinguish between different $*_\ell$. 
We will consider several wreath products $A\wr G$ as well as  
iterated wreath products
$$A\wr(A\wr(\cdots (A\wr G)\cdots))$$ where $A$ and $G$ are given with 
$A$ abelian (in fact, $A$ will be either $\mathbb Z$ or $\mathbb Z^r$). 
Set  $W(A,G)=W_1(A,G)= A\wr G$ and $W_k(A,G)= W(A,W_{k-1}(A,G))$. 
Depending on the context, we will denote convolution in $W_k(A,G)$ by
$$\star_k \mbox{ or } \star_{W_k}\mbox{ or } \star_{W_k(A,G)}.$$

Let $\mu$ be a probability measure on $G$ and $\eta$ 
be a probability measure on $A$. Note that the measures $\mu$ and $\eta$ can also be viewed, in a natural way, as measures on $W(A,G)$ with $\eta$ 
being supported by the copy of $A$ that sits above the identity element of $G$
in $A\wr G$.  
The associated switch-walk-switch measure 
on $W=W_1(A,G)$ is the measure 
$$q=q_1(\eta,\mu)=\eta\star_1 \mu\star_1 \eta.$$
Iterating this construction, we define the probability measure $q_k$ on 
$W_k(A,G)$ by the iterative formula
$$q_k= q_k(\eta,\mu) =\eta\star_k q_{k-1}\star_k \eta .$$
We refer to $q_k$ as the iterated swith-walk-switch measure on $W_k$ 
associated with the initial pair $\eta,\mu$. We will make repeated use of the 
following simple lemma.

\begin{lem}\label{lem-push1}
Let $A,G,H$ be finitely generated groups with $A$ abelian. 
Let $\theta :G\rightarrow
H$ be  a group homomorphism. Define  $\theta_1: W_1(A,G)\ra W_1(A,H)$ by
\[
\theta _1:(f,x)\mapsto (\overline{f},\theta (x)), 
\mbox{ where }
\overline{f}(h)=\sum_{g:\theta (g)=h}f(g) 
\]%
with the convention that sum over empty set is $0.$ 
Then $\theta_1$ is group homorphism. 

Define  $\theta_k: W_k(A,G)\ra W_k(A,H)$ 
by iterating the previous construction so 
$$\theta_k=(\theta_{k-1})_1 :W_1(A,W_{k-1}(A,G))\ra W_1(A,W_{k-1}(A,H)).$$
Then  $\theta_k$ is group homorphism. 
Moreover, if $\theta $
is injective (resp., surjective), then $\theta_k$ is also
injective (resp., surjective). 
\end{lem}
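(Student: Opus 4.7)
The plan is to handle the base case $k=1$ by direct calculation and then argue by induction on $k$. For $\theta_1$ being a homomorphism, I would write the wreath-product law with the $A$-component additively (legitimate since $A$ is abelian): $(f,x)(f',x') = (f + \tau_x f', xx')$ with $(\tau_x f')(g) = f'(x^{-1}g)$. The verification of $\theta_1((f,x)(f',x')) = \theta_1(f,x)\theta_1(f',x')$ reduces, after using the trivially additive behaviour $\overline{f_1 + f_2} = \bar f_1 + \bar f_2$, to the single pointwise identity $\overline{\tau_x f'} = \tau_{\theta(x)} \bar{f'}$. Evaluating the left side at $h \in H$ and substituting $g' = x^{-1}g$ turns the sum $\sum_{g:\theta(g)=h} f'(x^{-1}g)$ into $\sum_{g':\theta(g')=\theta(x)^{-1}h} f'(g') = \bar{f'}(\theta(x)^{-1}h)$, which is the right side. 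Finite support of $\bar f$ is automatic because the image of the support of $f$ is finite.

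For injectivity and surjectivity of $\theta_1$: if $\theta$ is injective, each fibre $\theta^{-1}(h)$ is empty or a singleton, so $\bar f(\theta(g)) = f(g)$; combined with $\theta(x)=\theta(x')$, this recovers $(f,x)$ from $\theta_1(f,x)$. If $\theta$ is surjective, then given any finitely supported $F:H\to A$ and $y\in H$, pick $x \in \theta^{-1}(y)$ and, for each $h \in \mathrm{supp}(F)$, a preimage $g_h \in \theta^{-1}(h)$; setting $f(g_h) = F(h)$ and $f = 0$ elsewhere yields $\theta_1(f,x) = (F,y)$.

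For the induction step I would observe that $\theta_k = (\theta_{k-1})_1$, where by the inductive hypothesis $\theta_{k-1}: W_{k-1}(A,G) \to W_{k-1}(A,H)$ is already a group homomorphism that inherits injectivity or surjectivity from $\theta$. Applying the $k=1$ argument with $\theta_{k-1}$ in place of $\theta$ (the abelianness hypothesis on the lamp group $A$ is unchanged at every stage) gives all three properties for $\theta_k$.

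No serious obstacle is anticipated. The only point that requires care is the compatibility of the pushforward $f \mapsto \bar f$ with the shift $\tau$, which depends on writing the lamp-group multiplication additively, a step that is valid precisely because $A$ is abelian at every level of the iterated wreath product.
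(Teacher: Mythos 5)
Your proof is correct and takes the same route the paper implicitly intends: the paper dismisses the lemma with ``The stated conclusions follow by inspection,'' and your write-up is exactly the inspection — the key identity $\overline{\tau_x f'} = \tau_{\theta(x)}\bar{f'}$, the fibre argument for injectivity and surjectivity, and induction via $\theta_k = (\theta_{k-1})_1$.
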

\begin{proof} The stated conclusions follow  by inspection. \end{proof}

\begin{lem} \label{lem-push2}
Let $A,G,H$ be finitely generted groups with $A$ abelian. 
Let $\mu $ and $\eta $ 
be a probability measures on $G$ and $A$, respectively. Let $\theta:G\ra H$ be a
homorphism and $\theta_k:W_k(A,G)\ra W_k(A,H)$ be as in 
{\em Lemma \ref{lem-push1}}. Let $\theta_k(q_k(\eta,\mu))$ be the pushforward 
of the iterated switch-walk-switch measure $q_k(\eta,\mu)$ on $W_k(A,G)$ under $\theta_k$. Then we have
$$\theta_k (q_k(\eta,\mu)) =q_k(\eta,\theta(\mu)).$$  
\end{lem}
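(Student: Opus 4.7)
The plan is to prove Lemma \ref{lem-push2} by induction on $k$, leveraging the fact (from Lemma \ref{lem-push1}) that each $\theta_k$ is a group homomorphism and that pushforward under a homomorphism commutes with convolution: $\theta_k(\alpha \star_k \beta) = \theta_k(\alpha) \star_k \theta_k(\beta)$ for any probability measures $\alpha,\beta$ on $W_k(A,G)$.

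For the base case $k=1$, I would unwind the conventions used to view $\eta$ and $\mu$ as measures on $W_1(A,G)$. An element $a \in A$ sits inside $W_1(A,G)$ as $(\delta_a, e_G)$, where $\delta_a$ is the function taking value $a$ at $e_G$ and $0$ elsewhere; under $\theta_1$ this is sent to $(\overline{\delta_a}, \theta(e_G)) = (\delta_a, e_H)$, which is exactly the embedding of $a$ in $W_1(A,H)$. Hence the pushforward of $\eta$ (viewed in $W_1(A,G)$) is $\eta$ (viewed in $W_1(A,H)$). Similarly $g \in G$ embeds as $(\mathbf{0},g)$, which $\theta_1$ sends to $(\mathbf{0},\theta(g))$, so the pushforward of $\mu$ is $\theta(\mu)$. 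Applying the homomorphism property of $\theta_1$ to the defining convolution $q_1(\eta,\mu) = \eta \star_1 \mu \star_1 \eta$ then gives $\theta_1(q_1(\eta,\mu)) = \eta \star_1 \theta(\mu) \star_1 \eta = q_1(\eta, \theta(\mu))$.

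For the inductive step, suppose the statement holds for $k-1$, applied to the homomorphism $\theta_{k-1}: W_{k-1}(A,G) \to W_{k-1}(A,H)$; that is, $\theta_{k-1}(q_{k-1}(\eta,\mu)) = q_{k-1}(\eta,\theta(\mu))$. By the definition $\theta_k = (\theta_{k-1})_1$, the same argument as in the base case (now applied with the role of ``$G$'' played by $W_{k-1}(A,G)$ and with ``$\theta$'' replaced by $\theta_{k-1}$) shows that $\theta_k$ pushes the embedded copy of $\eta$ to the embedded copy of $\eta$ in $W_k(A,H)$, and pushes a measure $\alpha$ supported on the $W_{k-1}(A,G)$-factor to its pushforward $\theta_{k-1}(\alpha)$ viewed in $W_k(A,H)$. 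Combined with the homomorphism/convolution compatibility, this yields
\[
\theta_k(q_k(\eta,\mu)) = \theta_k(\eta) \star_k \theta_k(q_{k-1}(\eta,\mu)) \star_k \theta_k(\eta) = \eta \star_k q_{k-1}(\eta,\theta(\mu)) \star_k \eta = q_k(\eta,\theta(\mu)),
\]
completing the induction.

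There is no real obstacle here; the only point that requires care is keeping the two different embeddings of $\eta$ (into $W_k(A,G)$ vs.\ $W_k(A,H)$) straight and checking that $\theta_k$ matches them. This is immediate from the explicit formula $\theta_1(f,x) = (\bar f, \theta(x))$ and the fact that $\theta$ (and hence each $\theta_j$) sends identity to identity, so the bookkeeping amounts to observing that $\overline{\delta_a} = \delta_a$ when $\theta$ fixes the basepoint.
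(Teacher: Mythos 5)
Your proof is correct and follows essentially the same approach as the paper: the paper says ``it suffices to check the case $k=1$'' and then establishes $\theta_1(\eta)=\eta$, $\theta_1(\mu)=\theta(\mu)$, and invokes the homomorphism property of $\theta_1$; you spell out the implicit induction that justifies the reduction, but the underlying argument is the same.
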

\begin{proof}It suffices to check the case $k=1$ 
where the desired conclusion reads
$$\theta_1( \eta\star_{A\wr G}\mu\star_{A\wr G}\eta)=
\eta\star_{A\wr H}\theta(\mu)\star_{A\wr H} \eta.$$
This equality follows from the three identities
$$\theta_1( \eta\star_{A\wr G}\mu\star_{A\wr G}\eta)
=\theta_1( \eta)\star_{A\wr H}\theta_1(\mu)\star_{A\wr G}\theta_1(\eta),$$
$$\theta_1(\mu)=\theta(\mu) \mbox{ and }\;\;\theta_1(\eta)=\eta.$$
The first identity holds because $\theta_1$ is an homomorphism. 
The other two identities hold by inspection (with some slight abuse 
of notation). 
\end{proof}

\subsection{Iterated lower bounds}

This section develops lower bounds for the probability of return 
of symmetric finitely supported random walks on 
$\Gamma_\ell(N)=\mathbf F_r/N^{(\ell)}$. By Dirichlet form comparison techniques
(see \cite{Pittet2000}), it suffices to consider  the case of the measure 
$\mu_\ell$ on $\Gamma_\ell(N)$ which is the image under the projection 
$\pi_\ell:\mathbf F_r\ra \mathbf F_r/N^{(\ell)}$  
 of the lazy symmetric simple 
random walk measure $\boldsymbol \mu$ on $\mathbf F_r$ defined at
(\ref{lazysr}), that is $\mu_\ell=\pi_\ell(\boldsymbol \mu)$.  
On $\mathbb Z^r$, let the probability $\eta$ be defined by 
$\eta(\pm \epsilon_i)=1/2r$ where $(\epsilon_i)_1^r$ is the canonical 
basis for $\mathbb Z^r$. 
Let $q_{\ell,j}$ be the $j$-th
iterated switch-walk-switch measure on $W_j(\mathbb Z^r, \Gamma_{\ell-j}(N))$
based on the probability measures $\eta$ on $\mathbb Z^r$ and 
$\mu_{\ell-j}$ on $\Gamma_{\ell-j}(N)$.  

\begin{theo} \label{theo-itlb}
Let the presentation $\Gamma_1(N)=\mathbf F_r/N$ be given.
Fix an integer $\ell$ and let $\mu_\ell=\pi_\ell(\boldsymbol \mu) $ be 
the probability measure on $\Gamma_\ell(N)$ describe above.
Let $*$ denote convolution on $\Gamma_\ell(N)$ and $\star$ denote convolution on 
$W_{\ell-1}(\mathbb Z^r,\Gamma_1(N))$. Then there exist $c,k\in (0,\infty)$ such 
that
$$\forall\,n,\;\;\mu_{\ell}^{*2n}(e_*)\ge c q_{\ell,\ell-1}^{\star 2kn}(e_\star).$$
\end{theo}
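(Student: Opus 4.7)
The plan is to iterate Proposition \ref{LowerExpression} one wreath layer at a time, using the Magnus embedding at each level and transferring measures upward through the wreath-product tower via Lemmas \ref{lem-push1}--\ref{lem-push2}. I induct on $j\in\{0,\ldots,\ell-1\}$ to show that there exist constants $c_j,k_j>0$ with
\[
\mu_\ell^{*2n}(e_*) \ge c_j\, q_{\ell,j}^{\star_j 2 k_j n}(e_{\star_j}),
\]
the desired conclusion being the case $j=\ell-1$. Throughout, the implicit assumption required to apply Proposition \ref{Lower} at each level is that none of the $\bar s_i$ are torsion in $\Gamma_1(N)$; this condition propagates automatically to every $\Gamma_k(N)$. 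The base case $j=0$ is trivial since $q_{\ell,0}=\mu_\ell$, so for the inductive step $j\to j+1$ it suffices to produce a comparison
\[
q_{\ell,j}^{\star_j 2n}(e_{\star_j}) \ge c\, q_{\ell,j+1}^{\star_{j+1} 2kn}(e_{\star_{j+1}}),
\]
whose construction mirrors the two-step structure of the proof of Proposition \ref{LowerExpression}.

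First, apply the Magnus embedding at the deepest level. With $M=N^{(\ell-j-1)}$, so that $\Gamma_1(M)=\Gamma_{\ell-j-1}(N)$ and $\Gamma_2(M)=\Gamma_{\ell-j}(N)$, one obtains an injective homomorphism $\bar\psi:\Gamma_{\ell-j}(N)\hookrightarrow\mathbb{Z}^r\wr\Gamma_{\ell-j-1}(N)$. By Lemma \ref{lem-push1} this lifts to an injective homomorphism $\Psi_j:W_j(\mathbb{Z}^r,\Gamma_{\ell-j}(N))\hookrightarrow W_{j+1}(\mathbb{Z}^r,\Gamma_{\ell-j-1}(N))$, and Lemma \ref{lem-push2} identifies the pushforward of $q_{\ell,j}=q_j(\eta,\mu_{\ell-j})$ as $q_j(\eta,\bar\psi(\mu_{\ell-j}))$. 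Since $\Psi_j$ is injective and preserves the identity,
\[
q_{\ell,j}^{\star_j 2n}(e_{\star_j}) = q_j(\eta,\bar\psi(\mu_{\ell-j}))^{\star_{j+1} 2n}(e_{\star_{j+1}}).
\]
Unpacking the recursion gives $q_{\ell,j+1}=q_j(\eta,\eta\star\mu_{\ell-j-1}\star\eta)$ on the same iterated wreath product $W_{j+1}(\mathbb{Z}^r,\Gamma_{\ell-j-1}(N))$, so the task reduces to comparing $q_j(\eta,\nu_1)$ and $q_j(\eta,\nu_2)$ for $\nu_1=\bar\psi(\mu_{\ell-j})$ and $\nu_2=\eta\star\mu_{\ell-j-1}\star\eta$.

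Second, one establishes the Dirichlet-form comparison $\mathcal{E}_{\nu_1}\le C\,\mathcal{E}_{\nu_2}$ on $\mathbb{Z}^r\wr\Gamma_{\ell-j-1}(N)$: this is the direct analogue of the comparison $\mathcal{E}_\phi\le(2r)^2\mathcal{E}_q$ used in the proof of Proposition \ref{LowerExpression}, since each Magnus generator $\bar\psi(s_i^{\pm1})=(\pm\delta^i,\bar s_i^{\pm1})$ factors as a lamp switch followed by a base walk and hence is realizable by a bounded number of switch-walk-switch steps of $\nu_2$. This deepest-level comparison then propagates through the outer $j$ wreath-product layers via the monotonicity statement: if $\mathcal{E}_{\nu_1}\le C\,\mathcal{E}_{\nu_2}$ on $H$, then $\mathcal{E}_{q_k(\eta,\nu_1)}\le C\,\mathcal{E}_{q_k(\eta,\nu_2)}$ on $W_k(\mathbb{Z}^r,H)$ for every $k\ge 1$. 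The case $k=1$ follows from the additive decomposition of the switch-walk-switch Dirichlet form into lamp-switch and base-walk contributions, and the inductive step uses $q_k(\eta,\cdot)=\eta\star_k q_{k-1}(\eta,\cdot)\star_k\eta$ together with the $k=1$ result. Combining the pieces and invoking \cite[Theorem 2.3]{Pittet2000} closes the induction.

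The main obstacle is this Dirichlet-form propagation through the outer wreath layers. While the heuristic that \emph{replacing an inner walk with a Dirichlet-form-dominating walk preserves the comparison at the outer layer} is natural, carrying it out cleanly at the level of iterated switch-walk-switch measures demands careful bookkeeping of how $\mathcal{E}_{q_k(\eta,\nu)}$ decomposes into lamp and base contributions at every layer, and of the compatibility of this decomposition with the recursive definition of $q_k$. Once this monotonicity lemma is secured, the rest of the argument proceeds by straightforward iteration of the two-step template inherited from Proposition \ref{LowerExpression}.
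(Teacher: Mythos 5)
Your proposal follows essentially the same route as the paper's proof: iterate the Magnus embedding at the innermost base group, lift it to the iterated wreath product via Lemma \ref{lem-push1}, use Lemma \ref{lem-push2} to compute pushforwards of iterated switch-walk-switch measures, and at each step invoke a Dirichlet-form comparison together with \cite[Theorem 2.3]{Pittet2000}. The paper packages the iteration as two lemmas (a first step from $\mu_\ell$ to $q_{\ell,1}$, and then a step from $q_{\ell,j-1}$ to $q_{\ell,j}$), whereas you formulate a single induction with base case $q_{\ell,0}=\mu_\ell$; that is a cosmetic difference.

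Two remarks on substance. First, the parenthetical about needing Proposition \ref{Lower} and hence non-torsion generators at every level is a red herring: the iteration as written (and as in the paper) goes through the Magnus embedding $\bar\psi:\Gamma_m(N)\hookrightarrow\mathbb Z^r\wr\Gamma_{m-1}(N)$, which is always injective regardless of torsion, so $\mu_m^{*n}(e_*)=[\bar\psi(\mu_m)]^{\star n}(e_\star)$ holds unconditionally; you never actually call Proposition \ref{Lower}. Second, and more importantly, your formulation of the propagation step (``if $\mathcal E_{\nu_1}\le C\mathcal E_{\nu_2}$ on $H$ then $\mathcal E_{q_k(\eta,\nu_1)}\le C\mathcal E_{q_k(\eta,\nu_2)}$ on $W_k(\mathbb Z^r,H)$'') is not something that follows from a simple ``additive decomposition of the switch-walk-switch Dirichlet form into lamp and base contributions'': the s-w-s Dirichlet form does not split additively, and an abstract Dirichlet form inequality on $H$ does not lift to $A\wr H$ with the same constant (or, absent further hypotheses, at all). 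What does propagate through wreath layers, and what the paper is implicitly invoking when it writes ``by a simple Dirichlet form comparison argument'', is the finite-support, word-length version of the comparison: every element of $\text{supp}(\nu_1)$ is a word of uniformly bounded length in $\text{supp}(\nu_2)$. This combinatorial property does lift layer by layer (a lamp switch followed by a bounded $\nu_2$-word followed by a lamp switch can be re-bracketed, up to inserting canceling $\eta$-pairs, into a bounded number of s-w-s steps), and since $\ell$ is fixed the constants stay controlled even though they grow at each layer. If you replace the ``additive decomposition'' heuristic by this word-length lifting, the argument closes and is the same as the paper's.
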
 
\begin{proof} The proof is obtain by an iterative procedure based on 
repeated use of the Magnus embedding $\Gamma_m(N)\hookrightarrow \mathbb Z^r \wr 
\Gamma_{m-1}(N)$ and comparison of Dirichlet forms. The desired conclusion follows immediately from the following two lemmas.
\end{proof}

\begin{lem} Let $*$ denotes convolution on $\Gamma_\ell(N)$.
Let $\star$ denote convolution on $W(\mathbb Z^r,\Gamma_{\ell-1}(N))=
\mathbb Z^r\wr \Gamma_{\ell-1}(N)$.
Let $\mu_{\ell}$ and $\mu_{\ell-1}$
be as defined above. 
Then
$$\mu_\ell^{*2n}(e_*)\ge 
c [\eta \star \mu_{\ell-1} \star\eta]^{\star 2kn}(e_\star).$$
\end{lem}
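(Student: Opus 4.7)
The plan is to combine the Magnus embedding with the general subgroup return-probability comparison of Pittet--Saloff-Coste, exactly mirroring the argument used in the proof of Theorem \ref{theo-fol} but now applied one layer deeper in the derived tower. First, I would observe that $M := N^{(\ell-1)}$ is characteristic in $N$ and hence normal in $\mathbf F_r$. Setting up the Magnus embedding of Section \ref{sec-Magnus} for the pair $(\mathbf F_r, M)$, and using the identifications $\mathbf F_r/M = \Gamma_{\ell-1}(N)$ together with $\mathbf F_r/[M,M] = \mathbf F_r/N^{(\ell)} = \Gamma_\ell(N)$, I would obtain an injective homomorphism
$$\bar\psi : \Gamma_\ell(N) \hookrightarrow \mathbb Z^r \wr \Gamma_{\ell-1}(N).$$

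Next, I would invoke \cite[Theorem 1.3]{Pittet2000}: since $\Gamma_\ell(N)$ embeds as a subgroup of the wreath product $\mathbb Z^r \wr \Gamma_{\ell-1}(N)$, we have
$$\Phi_{\Gamma_\ell(N)}(n) \ge c_1 \, \Phi_{\mathbb Z^r \wr \Gamma_{\ell-1}(N)}(k_1 n)$$
for some constants $c_1, k_1 \in (0,\infty)$.

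Finally, I would translate this quasi-isometry-invariant bound into the stated concrete inequality using the defining property of $\Phi$. The measure $\mu_\ell = \pi_\ell(\boldsymbol\mu)$ is a symmetric, finitely supported probability measure on $\Gamma_\ell(N)$ whose support contains every $s_i = \pi_\ell(\mathbf s_i)$, so it has generating support. On the wreath product side, $\eta$ is symmetric with support generating $\mathbb Z^r$, and $\mu_{\ell-1} = \pi_{\ell-1}(\boldsymbol\mu)$ is symmetric, finitely supported with generating support on $\Gamma_{\ell-1}(N)$; a standard check (as reviewed in Section \ref{sub-wr}) then shows that the switch-walk-switch measure $\eta \star \mu_{\ell-1} \star \eta$ is symmetric, finitely supported, with generating support on $\mathbb Z^r \wr \Gamma_{\ell-1}(N)$. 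By \cite[Theorem 1.4]{Pittet2000}, both $n \mapsto \mu_\ell^{*2n}(e_*)$ and $n \mapsto [\eta\star\mu_{\ell-1}\star\eta]^{\star 2n}(e_\star)$ are $\simeq$-representatives of $\Phi_{\Gamma_\ell(N)}$ and $\Phi_{\mathbb Z^r\wr \Gamma_{\ell-1}(N)}$ respectively. Combining these $\simeq$-equivalences with the subgroup bound above, and absorbing all multiplicative constants, yields the desired $\mu_\ell^{*2n}(e_*) \ge c \, [\eta\star \mu_{\ell-1}\star\eta]^{\star 2kn}(e_\star)$.

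There is no serious obstacle in this argument; its content lies entirely in recognizing that the Magnus embedding can be iterated along the derived series by regarding $N^{(\ell-1)}$ as the ``$N$'' for one further application of the construction. The lemma is thus simply the inductive step that, combined with a parallel upper-bound statement, drives the iterative comparison required to treat $\mathbf S_{d,r}$ for $d>2$.
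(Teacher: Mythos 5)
Your argument is correct, and the core idea is the same as the paper's: identify $M=N^{(\ell-1)}$ as normal in $\mathbf F_r$, apply the Magnus embedding to $(\mathbf F_r, M)$ to get $\bar\psi_\ell : \Gamma_\ell(N)\hookrightarrow \mathbb Z^r\wr\Gamma_{\ell-1}(N)$, and then use comparison tools from \cite{Pittet2000}. The route you take through the comparison is, however, genuinely different. The paper uses the exact pushforward identity $\mu_\ell^{*n}(e_*)=[\bar\psi_\ell(\mu_\ell)]^{\star n}(e_\star)$ (valid because $\bar\psi_\ell$ is injective), and then compares the Dirichlet forms of $\bar\psi_\ell(\mu_\ell)$ and $\eta\star\mu_{\ell-1}\star\eta$ directly on $\mathbb Z^r\wr\Gamma_{\ell-1}(N)$ via \cite[Theorem~2.3]{Pittet2000}. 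You instead go through the quasi-isometry invariant $\Phi$: subgroup comparison \cite[Theorem~1.3]{Pittet2000} gives $\Phi_{\Gamma_\ell(N)}\succeq\Phi_{\mathbb Z^r\wr\Gamma_{\ell-1}(N)}$, and stability \cite[Theorem~1.4]{Pittet2000} lets you replace $\Phi$ by the specific measures $\mu_\ell$ and $\eta\star\mu_{\ell-1}\star\eta$ once you verify they are symmetric, finitely supported and generating (which they are). Your route is a bit more modular and requires no direct Dirichlet-form computation comparing the pushforward measure to the switch-walk-switch measure; the paper's route is leaner at the level of constants (the first step is an exact equality) and mirrors exactly the pattern that is reused in the subsequent inductive lemma for the iterated wreath products, which is presumably why the authors phrase it that way. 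Both are valid proofs of the lemma.
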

\begin{proof} Let $\bar{\psi}_\ell :\Gamma_{\ell}(N)\hookrightarrow \mathbb Z^r\wr \Gamma_{\ell-1}(N)$ be the Magnus embedding. 
Then $\mu_\ell^{*n}(e_*)=[\bar{\psi}_\ell(\mu)]^{\star n}(e_\star)$
and, by a simple Dirichlet form comparison argument,
$$[\bar{\psi}_\ell(\mu)]^{\star 2n}(e_\star)\ge c [\eta\star\mu_{\ell-1}\star \eta ]^{\star 2kn}(e_\star).$$
\end{proof} 

\begin{lem} Fix two integers $0< j< \ell$.  
Let $\star_j$ denote convolution on 
$W_j(\mathbb Z^r, \Gamma_{\ell-j}(N))$.
Then, for $2\le j<\ell$,
$$q_{\ell,j-1}^{\star_{j-1} 2n}(e_{\star_{j-1}})
\ge c q_{\ell,j}^{\star_j 2kn}(e_{\star _j}).$$ 
\end{lem}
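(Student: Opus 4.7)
The plan is to mimic the strategy of the preceding lemma (which is the $j=1$ analogue) one level deeper in the derived series. Since $\Gamma_{\ell-j+1}(N) = \mathbf{F}_r/[N^{(\ell-j)},N^{(\ell-j)}]$ is the ``metabelianization'' of $\mathbf{F}_r/N^{(\ell-j)} = \Gamma_{\ell-j}(N)$, Section~2 provides a Magnus embedding $\bar{\psi} : \Gamma_{\ell-j+1}(N) \hookrightarrow \mathbb Z^r \wr \Gamma_{\ell-j}(N) = W_1(\mathbb Z^r, \Gamma_{\ell-j}(N))$. By Lemma~6.1 this lifts to an injective homomorphism
$$\bar{\psi}_{j-1} : W_{j-1}(\mathbb Z^r, \Gamma_{\ell-j+1}(N)) \hookrightarrow W_{j-1}(\mathbb Z^r, W_1(\mathbb Z^r, \Gamma_{\ell-j}(N))) = W_j(\mathbb Z^r, \Gamma_{\ell-j}(N)),$$
and by Lemma~6.2 the pushforward satisfies $\bar{\psi}_{j-1}(q_{\ell,j-1}) = q_{j-1}(\eta, \bar{\psi}(\mu_{\ell-j+1}))$. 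Injectivity preserves the return probability at the identity, so $q_{\ell,j-1}^{\star_{j-1} 2n}(e_{\star_{j-1}}) = q_{j-1}(\eta, \bar{\psi}(\mu_{\ell-j+1}))^{\star_j 2n}(e_{\star_j})$.

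I would then use the ``peeling identity'' $q_j(\eta,\mu) = q_{j-1}(\eta, q_1(\eta,\mu))$, immediate from the recursive definition of $q_k$ together with the associativity $W_k(\mathbb Z^r,G) = W_{k-1}(\mathbb Z^r, W_1(\mathbb Z^r,G))$, to rewrite $q_{\ell,j} = q_{j-1}(\eta, q_1(\eta,\mu_{\ell-j}))$ as a $(j-1)$-fold iterated sws measure on $W_j(\mathbb Z^r,\Gamma_{\ell-j}(N))$ whose base measure is $q_1(\eta,\mu_{\ell-j}) = \eta \star_1 \mu_{\ell-j} \star_1 \eta$ on $W_1(\mathbb Z^r,\Gamma_{\ell-j}(N))$. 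The problem then reduces to a Dirichlet form comparison between $q_{j-1}(\eta, \bar{\psi}(\mu_{\ell-j+1}))$ and $q_{j-1}(\eta, q_1(\eta,\mu_{\ell-j}))$ on $W_j(\mathbb Z^r,\Gamma_{\ell-j}(N))$, two iterated sws measures which differ only in their base measures. I would first establish the base-level comparison $\mathcal E_{\bar{\psi}(\mu_{\ell-j+1})} \le C_0\, \mathcal E_{q_1(\eta,\mu_{\ell-j})}$ on $W_1(\mathbb Z^r,\Gamma_{\ell-j}(N))$, exactly analogous to the ``simple Dirichlet form comparison'' invoked in the preceding lemma: $q_1(\eta,\mu_{\ell-j})$ has generating support in the full wreath product while each $\bar{\psi}(s_i^{\pm 1}) = (\pm \lambda_{\mathbf s_i}, \bar s_i^{\pm 1})$ in $\mathrm{supp}(\bar{\psi}(\mu_{\ell-j+1}))$ is a bounded word in $\mathrm{supp}(q_1(\eta,\mu_{\ell-j}))$ (using laziness of $\mu_{\ell-j}$ and cancelling-switch identities of the form $(\epsilon,0)(0,g_1)(\alpha,0)(-\alpha,0)(0,g_2)(\epsilon',0) = (\epsilon,0)(0,g_1 g_2)(\epsilon',0)$). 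Propagating this comparison by induction through the $(j-1)$-fold sws construction—the inductive step applying the same cancelling-switch identity one level up—yields $\mathcal E_{q_{j-1}(\eta,\bar{\psi}(\mu_{\ell-j+1}))} \le C\, \mathcal E_{q_{\ell,j}}$, and \cite[Theorem~2.3]{Pittet2000} then converts this Dirichlet form comparison into the return-probability comparison $q_{\ell,j}^{\star_j 2kn}(e_{\star_j}) \le C' q_{j-1}(\eta,\bar{\psi}(\mu_{\ell-j+1}))^{\star_j 2n}(e_{\star_j})$.

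Combining the two displays above yields the claim. The main obstacle I expect to face is the careful bookkeeping required for the inductive propagation of the Dirichlet form comparison through the $(j-1)$ layers of wreath product construction: each single inductive step is a routine manipulation in a wreath product using cancelling switches (no new ideas beyond the $j=1$ case), but tracking the switch measures, base measures and generating sets consistently through the iterated construction requires care.
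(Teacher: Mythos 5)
Your proposal is correct and follows essentially the same route as the paper: Magnus embedding $\bar\psi\colon\Gamma_{\ell-j+1}(N)\hookrightarrow\mathbb Z^r\wr\Gamma_{\ell-j}(N)$, lift via Lemma 6.1 to an injective $\bar\psi_{j-1}\colon W_{j-1}(\mathbb Z^r,\Gamma_{\ell-j+1}(N))\hookrightarrow W_j(\mathbb Z^r,\Gamma_{\ell-j}(N))$, identify the pushforward with $q_{j-1}(\eta,\bar\psi(\mu_{\ell-j+1}))$ via Lemma 6.2 (an injective homomorphism preserves return probability), and close with a Dirichlet-form comparison against $q_{\ell,j}$ and \cite[Theorem 2.3]{Pittet2000}. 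One remark: the inductive propagation through $j-1$ layers that you anticipate as the main obstacle is not needed -- since $\bar\psi_{j-1}(q_{\ell,j-1})$ is symmetric and finitely supported and $q_{\ell,j}$ is symmetric, finitely supported with generating support on $W_j(\mathbb Z^r,\Gamma_{\ell-j}(N))$, the bound $\mathcal E_{\bar\psi_{j-1}(q_{\ell,j-1})}\le C\,\mathcal E_{q_{\ell,j}}$ follows at one stroke from the standard comparison lemma for finitely supported symmetric measures.
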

\begin{proof} By definition, we have 
$$q_{\ell,j-1}=\eta \star_{j-1}
q_{\ell-1,j-2} \star_{j-1} \eta .$$  
where $q_{\ell-1,j-2}$ is the switch-walk-switch measure on 
$W_{j-1}(\mathbb Z^r,\Gamma_{\ell-j+1}(N))$. Let $\bar{\psi}$ 
denote the Magnus embedding
$$\bar{\psi}: \Gamma_{\ell-j+1}(N) 
\hookrightarrow \mathbb Z^r\wr \Gamma_{\ell-j}(N).$$ 
Let 
$$\widetilde{\psi}: W_{j-1}(\mathbb Z^r,\Gamma_{\ell-j+1}(N))
\hookrightarrow W_{j-1}(\mathbb Z^r, \mathbb Z^r\wr \Gamma_{\ell-j}(N))= 
W_j(\mathbb Z^r, \Gamma_{\ell-j}(N)
) $$ 
its natural extension as in Lemma \ref{lem-push1}.
Observe that
\begin{eqnarray*}q_{\ell,j-1}^{\star_{j-1}2n}(e_{\star_{j-1}})&=&
\widetilde{\psi}(\eta \star_{j-1}
q_{\ell-1,j-2} \star_{j-1} \eta)^{\star_j 2n}(e_{\star_j})\\
&=& [\eta \star_j \widetilde{\psi}(q_{\ell-1,j-2}) \star_j \eta]^{\star_j 2n}(e_{\star_j})  
\end{eqnarray*}
where we used Lemma \ref{lem-push2} to obtain the second  identity.
Again, by a simple Dirichlet form comparison argument,
\begin{eqnarray*}
[\eta \star_j \widetilde{\psi}( q_{\ell-1,j-2})\star_j \eta]^{\star_j 2n}(e_{\star_j})&\ge& c 
[\eta \star_j  q_{\ell-1,j-1}\star_j \eta]^{\star_j 2kn}(e_{\star_j})    \\
&=& c q_{\ell,j}^{\star_j 2kn}(e_{\star_j}).
\end{eqnarray*} 
\end{proof}

Propositions \ref{prob-FC}--\ref{wr-FC}--\ref{pro-comp} (which are based on the results in \cite{CGP,Erschler2006}) provide 
 us with good lower bounds for the probability of return on iterated 
wreath poduct.  Namely,
\begin{itemize}
\item Assume that $A=\mathbb Z^b$ with $b\ge 1$ and 
$G$ has polynomial volume growth of degree $D$. Then, for $\ell\ge 2$,
$$\Phi_{W_\ell(A,G)}(n)\simeq \exp \left(- n \left(\frac{ \log_{[\ell]} n}{\log _
{[\ell-1]} n }\right)^{2/D}\right).$$
\item Assume that $A=\mathbb Z^b$ with $b\ge 1$ and 
$G$ is polycyclic  with exponential volume growth. Then, for $\ell\ge 1$,
$$\Phi_{W_\ell(A,G)}(n)\simeq \exp \left(- n /[\log_{[\ell]}n]^2\right).$$
This applies, for instance, when $G$ is the Baumslag-Solitar group $\mbox{BS}(1,q)$, $q>1$. Further, the same result holds for the wreath product  
$G=\mathbb Z_q\wr \mathbb Z$, $q>1$, (even so it is not polycyclic).
\end{itemize}

Together with Theorem \ref{theo-itlb}, 
these computations yield the following results.

\begin{cor} \label{cor-itlow}
Let $\Gamma_\ell(N)=\mathbf F_r/N^{(\ell)}$.
\begin{itemize}
\item Assume that $\Gamma_1(N)$ has polynomial volume growth of degree $D$.
Then, for $\ell\ge 3$,
$$\Phi_{\Gamma_\ell(N)}(n)\geq 
\exp \left(- Cn \left(\frac{ \log_{[\ell-1]} n}
{\log _{[\ell-2]} n }\right)^{2/D}\right).$$
\item Assume that $\Gamma_1(N)$ is $\mbox{\em BS}(1,q)$ with $q>1$, or $\mathbb Z_2\wr \mathbb Z$, or polycyclic of exponential volmue growth volume growth. Then, for $\ell\ge 2$,
$$\Phi_{\Gamma_\ell(N)}(n)\geq 
\exp \left(- C n /[\log_{[\ell-1]}n]^2\right).$$
\item Assume that $\Gamma_1(N)= K\wr \mathbb Z^D$, $D\ge 1$ and $K$ finite. Then,
for $\ell\ge 2$,
$$\Phi_{\Gamma_\ell(N)}(n)\geq 
\exp \left(- Cn/
[\log _{[\ell-1]} n]^{2/D}\right).$$
\item Assume that $\Gamma_1(N)=\mathbb Z ^a\wr \mathbb Z^D$, $a,D\ge 1$. Then,
for $\ell\ge 2$,
$$\Phi_{\Gamma_\ell(N)}(n)\geq 
\exp \left(- Cn \left(\frac{ \log_{[\ell]} n}
{\log _{[\ell-1]} n }\right)^{2/D}\right).$$
\end{itemize}
\end{cor}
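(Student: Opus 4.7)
The plan is to combine Theorem \ref{theo-itlb} with the iterated F\o lner-couple machinery of Propositions \ref{prob-FC}--\ref{wr-FC}--\ref{pro-comp}. Theorem \ref{theo-itlb} gives $\mu_\ell^{*2n}(e_*)\ge c\,q_{\ell,\ell-1}^{\star 2kn}(e_\star)$ on the iterated wreath product $W_{\ell-1}(\mathbb Z^r,\Gamma_1(N))$; since $q_{\ell,\ell-1}$ is symmetric, finitely supported, and generates $W_{\ell-1}(\mathbb Z^r,\Gamma_1(N))$ (the $\eta$-factors generate each $\mathbb Z^r$-layer above the identity and $\mu_1$ generates $\Gamma_1(N)$), the Dirichlet form comparison of \cite{Pittet2000} yields
$$\Phi_{\Gamma_\ell(N)}(n)\;\gtrsim\;\Phi_{W_{\ell-1}(\mathbb Z^r,\,\Gamma_1(N))}(n).$$
So everything reduces to a lower bound on the return probability of the iterated wreath product.

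For this lower bound I would start from a sequence of F\o lner couples on $\Gamma_1(N)$ adapted to the appropriate volume function $\mathcal V_0$ (which exists in each of the four cases: $\mathcal V_0(r)\simeq r^D$ in the nilpotent case; $\mathcal V_0(r)\simeq \exp(r)$ for $\mathrm{BS}(1,q)$, polycyclic-exponential, and $\mathbb Z_q\wr\mathbb Z$ by \cite{Pittet2000}; $\mathcal V_0(r)\simeq \exp(r^D)$ for $K\wr\mathbb Z^D$ with $K$ finite; and $\mathcal V_0(r)\simeq \exp(r^D\log r)$ for $\mathbb Z^a\wr\mathbb Z^D$). Applying Proposition \ref{wr-FC} inductively $\ell-1$ times produces F\o lner couples on $W_{\ell-1}(\mathbb Z^r,\Gamma_1(N))$ adapted to $\mathcal V_{\ell-1}$, where $\mathcal V_k=\exp(C\,\mathcal V_{k-1}\log \mathcal V_{k-1})$. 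Proposition \ref{prob-FC} then gives $\Phi_{W_{\ell-1}(\mathbb Z^r,\Gamma_1(N))}(n)\gtrsim 1/\gamma_{\ell-1}(Cn)$ with $\gamma_{\ell-1}$ defined from $\mathcal V_{\ell-1}$ by (\ref{gamma}).

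It remains to compute $\gamma_{\ell-1}$ in each case. For Cases 1 and 2 of the corollary, this is exactly what the two bullets stated just before Corollary \ref{cor-itlow} provide (applied with depth $\ell-1$ in place of $\ell$), and I would simply invoke them. For Cases 3 and 4, I would iterate Proposition \ref{pro-comp} starting from $\mathcal V_0$: one check of case (2) of that proposition handles the first iteration (the $\ell=2$ level that already appears in Example \ref{exa-wr}), and the remaining iterations fall under case (3), because after one step $\mathcal V_1$ is of the form $\exp(\ell^{-1}(t))$ with $\ell$ slowly varying and invariant under $t\mapsto t^a$. An induction on the number of iterations then produces the stated formulas involving $\log_{[\ell-1]}n$ in Case 3 and $\log_{[\ell]} n/\log_{[\ell-1]}n$ in Case 4.

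The main technical obstacle is tracking the iterated logarithms cleanly and, at each iteration, verifying the $\delta$-regularity hypothesis of Proposition \ref{prob-FC} for the function $\gamma$ produced by Proposition \ref{pro-comp}. As noted in the discussion following Proposition \ref{prob-FC}, $\delta$-regularity holds automatically when $\mathcal V$ is of regular variation of positive index or when $\log\mathcal V$ is of regular or rapid variation; each intermediate $\mathcal V_k$ stays within these classes (the key stability being that $t\mapsto \ell^{-1}(t)$ remains of rapid variation of slowly varying index under the map $\mathcal V\mapsto \exp(C\mathcal V\log \mathcal V)$), so the induction step goes through. Once this bookkeeping is in place, the four bounds drop out by matching $\gamma_{\ell-1}^{-1}(n)$ against the exponential expressions in the statement.
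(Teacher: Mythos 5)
Your proposal is correct and is essentially the paper's own argument: the paper proves the corollary by combining Theorem \ref{theo-itlb} with the F\o lner-couple lower bounds for iterated wreath products furnished by Propositions \ref{prob-FC}, \ref{wr-FC} and \ref{pro-comp}, exactly as you describe (the paper states the first two bullets explicitly and leaves the analogous $\gamma_{\ell-1}$ computations for Cases 3 and 4 implicit). Your accounting of the depth shift from $\ell$ to $\ell-1$, of the starting volume functions $\mathcal V_0$ in each case, and of the $\delta$-regularity check matches what the paper intends.
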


\subsection{Iterated upper bounds}

We now present an iterative approach to obtain upper bounds on 
$\Phi_{\Gamma_\ell(N)}$. Although similar in spirit to the iterated 
lower bound technique developed in the previous section, 
the iterative upper bound 
method is both more difficult and mcuch less flexible. In the end, we will 
be able to apply it only in the case of the free 
solvable groups $\mathbf S_{d,r}$, that is, when $N=[\mathbf F_r,\mathbf F_r]$.

Our first task is to formalize algebraically the content of 
Proposition \ref{Upper}. Recall once more that the Magnus embedding
provides an injective homomorphism $\bar{\psi}: \mathbf F_r/[N,N]
\hookrightarrow 
\left(\sum_{x\in \mathbf F_r/N}\mathbb Z^r_x\right)\rtimes \mathbf F_r/N$
with $\bar{\psi}(g)=(\bar{a}(g),\bar{\pi}(g))$.  Let $\Gamma$ be a subgroup of 
$\mathbf F_r/[N,N]$ and $\boldsymbol \rho\in N\setminus [N,N]\subset 
\mathbf F_r$. Set $\rho=\pi_2(\boldsymbol \rho)$ and 
$\overline{\Gamma}=\pi(\Gamma)\subset \mathbf F_r/N$. 

Assume that $(\Gamma,\boldsymbol \rho)$ is an exclusive pair as in Definition
\ref{def-exclu}. We are going to construct a surjective homomorphism
$$\vartheta: \langle \Gamma,\rho\rangle\ra \mathbb Z\wr \overline{\Gamma}.$$
Let $g\in \langle \Gamma,\rho\rangle$. Consider two decompositions of $g$ as products
$$g=
\gamma_1\rho^{x_1}\gamma_2\rho^{x_2}\cdots \gamma_p \rho^{x_p}\gamma_{p+1}=
\gamma'_1\rho^{x'_1}\gamma'_2\rho^{x'_2}\cdots  \gamma'_q 
\rho^{x'_q}\gamma'_{q+1}$$
with $\gamma_i\in \Gamma$, $1\le i\le p+1$, $\gamma'_i\in \Gamma$, $1\le i\le q+1$.  Set 
$\sigma_i= \gamma_1\dots \gamma_i$, $1\le i\le p+1$, and
$\sigma'_i= \gamma_1\dots \gamma_i$, $1\le i\le q+1$. 
Observe that
$$g=
\sigma_1\rho^{x_1}\sigma_1^{-1}\sigma_2\rho^{x_2}\sigma_2^{-1}\cdots  \sigma_p \rho^{x_p}\sigma_p^{-1}\sigma_{p+1}= \alpha \sigma_{p+1} $$
where 
$$\alpha= \sigma_1\rho^{x_1}\sigma_1^{-1}\sigma_2\rho^{x_2}\sigma_2^{-1}\cdots  \sigma_p \rho^{x_p}\sigma_p^{-1}.$$
Similarly $g=\alpha' \sigma'_{q+1}$ and we have
$$(\alpha')^{-1}\alpha = \sigma'_{q+1}(\sigma_{p+1})^{-1}.$$
By Lemma \ref{lem-alg-a}, 
we have
\begin{eqnarray*}\bar{a}(g)&= &
\sum_1^p x_i\tau_{\bar{\sigma}_i}\bar{a}(\rho) +\bar{a}(\sigma_{p+1})\\
&=& \sum_1^q x'_i\tau_{\bar{\sigma}'_i}\bar{a}(\rho) +\bar{a}(\sigma'_{q+1}).
\end{eqnarray*}
and
$$
\sum_1^p x_i\tau_{\bar{\sigma}_i}\bar{a}(\rho) - \sum_1^q x'_i\tau_{\bar{\sigma}'_i}\bar{a}(\rho) 
=\bar{a}(\sigma'_{q+1}\sigma_{p+1}^{-1}).
$$
As $(\Gamma,\boldsymbol \rho)$ is an exclusive pair, condition (ii) of 
Definition \ref{def-exclu} implies that
$$
\sum_1^p x_i\tau_{\bar{\sigma}_i}\bar{a}(\rho) - \sum_1^q x'_i\tau_{\bar{\sigma}'_i}\bar{a}(\rho) 
=\bar{a}(\sigma'_{q+1}\sigma_{p+1}^{-1})=0.
$$
Hence
$$
\sum_1^p x_i\tau_{\bar{\sigma}_i}\bar{a}(\rho) 
= \sum_1^q x'_i\tau_{\bar{\sigma}'_i}\bar{a}(\rho)$$
in $\sum_{x\in \Gamma_1(N)}\mathbb Z^r_x$. This also implies that
$a(\sigma_{p+1})=a(\sigma_{q+1}')$. By construction, we also have 
$\bar{\pi}(\sigma_{p+1})=\bar{\pi}(\sigma'_{q+1})$. Hence, 
$\sigma_{p+1}=\sigma'_{q+1}$ in $\Gamma$.

By condition (i) of Definition \ref{def-exclu} (see Remark \ref{rem-wr-rho}),
we can identify $$
\sum_1^p x_i\tau_{\bar{\sigma}_i}\bar{a}(\rho) 
= \sum_1^q x'_i\tau_{\bar{\sigma}'_i}\bar{a}(\rho)$$
with the
 element 
$$ \left(\sum_1^p x_i \mathbf 1_{h}(\sigma_i)\right)_{h\in 
\overline{\Gamma}}\;\; \mbox{ 
of }\;\;\sum_{h \in \overline{\Gamma}}\mathbb Z_{h}.$$
This preparatory work allows us to define a map
\begin{eqnarray*}
\vartheta &: & \langle \Gamma,\rho\rangle \ra \mathbb Z\wr \overline{\Gamma}\\g&=&\gamma_1\rho^{x_1}\gamma_2\rho^{x_2}\cdots \gamma_p \rho^{x_p}\gamma_{p+1}
\mapsto \left(
\left(\sum_1^p x_i \mathbf 1_{h}(\sigma_i)\right)_{h\in 
\overline{\Gamma}}, \bar{\pi}(g)\right).
\end{eqnarray*}
\begin{lem} \label{lem-vartheta}
The map $\vartheta :  \langle \Gamma,\rho\rangle \ra \mathbb Z\wr \overline{\Gamma}$ is a surjective homomorphism.
\end{lem}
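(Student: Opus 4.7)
The preparatory computation before the statement already shows that $\vartheta$ is well defined: two decompositions of the same element $g$ lead, via Lemma \ref{lem-alg-a} and condition (ii) of Definition \ref{def-exclu}, to the same $\bar a$-part $\sum x_i\tau_{\bar\sigma_i}\bar a(\rho)$ and the same remainder $\sigma_{p+1}\in\Gamma$, and condition (i) then lets us identify these with the same element of $\sum_{h\in\overline\Gamma}\mathbb Z_h$. So my plan is to use this well-definedness twice: once to choose convenient decompositions when checking the homomorphism law, and once to read off the values of $\vartheta$ on a convenient generating set.

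For the homomorphism property, I would take two elements
$$g=\gamma_1\rho^{x_1}\cdots\gamma_p\rho^{x_p}\gamma_{p+1},\qquad g'=\gamma'_1\rho^{x'_1}\cdots\gamma'_q\rho^{x'_q}\gamma'_{q+1},$$
and use the concatenated decomposition
$$gg'=\gamma_1\rho^{x_1}\cdots\gamma_p\rho^{x_p}(\gamma_{p+1}\gamma'_1)\rho^{x'_1}\gamma'_2\cdots\rho^{x'_q}\gamma'_{q+1},$$
which has $p+q$ occurrences of $\rho$. The associated partial products are $\sigma_1,\dots,\sigma_p$ (as for $g$) followed by $\sigma_{p+1}\sigma'_1,\dots,\sigma_{p+1}\sigma'_q$. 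Hence by the very definition of $\vartheta$,
$$\vartheta(gg')=\Bigl(\bigl(\sum_{i=1}^p x_i\mathbf 1_h(\bar\sigma_i)+\sum_{j=1}^q x'_j\mathbf 1_h(\bar\sigma_{p+1}\bar\sigma'_j)\bigr)_{h\in\overline\Gamma},\bar\pi(gg')\Bigr).$$
The second sum equals $\tau_{\bar\sigma_{p+1}}\bigl(\sum_j x'_j\mathbf 1_{\cdot}(\bar\sigma'_j)\bigr)(h)$, and since $\bar\pi(g)=\bar\sigma_{p+1}$ (the conjugates $\sigma_i\rho^{x_i}\sigma_i^{-1}$ being killed by $\bar\pi$), this is exactly the formula for the wreath product multiplication $\vartheta(g)\vartheta(g')=(f+\tau_{\bar\pi(g)}f',\bar\pi(g)\bar\pi(g'))$.

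For surjectivity, applying the definition to the length-one decompositions $\rho=e\cdot\rho^{1}\cdot e$ and $\gamma=\gamma$ ($\gamma\in\Gamma$) gives
$$\vartheta(\rho)=(\delta_{\bar e},\bar e),\qquad \vartheta(\gamma)=(0,\bar\pi(\gamma)),$$
where $\delta_{\bar e}$ is the function equal to $1$ at $\bar e\in\overline\Gamma$ and $0$ elsewhere. Since $\bar\pi(\Gamma)=\overline\Gamma$, the element $(0,h)$ lies in $\vartheta(\langle\Gamma,\rho\rangle)$ for every $h\in\overline\Gamma$, and conjugating $\vartheta(\rho)$ by $\vartheta(\gamma)$ (with $\bar\pi(\gamma)=h$) yields $(\delta_h,\bar e)$. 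These elements generate $\mathbb Z\wr\overline\Gamma$, so $\vartheta$ is surjective.

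The only place any real work is hidden is in the homomorphism verification, and even there the computation is bookkeeping once the formula is set up; the substantive input is Lemma \ref{lem-alg-a} together with conditions (i)--(ii) of Definition \ref{def-exclu}, which have already been used to make $\vartheta$ well defined. The main thing to be careful about is keeping track of which translation ($\tau_{\bar\sigma_{p+1}}$ vs.\ $\tau_{\bar\pi(g)}$) appears and why they agree, and this is exactly the identity $\bar\pi(g)=\bar\sigma_{p+1}$ coming from the fact that $\rho\in\ker\bar\pi$.
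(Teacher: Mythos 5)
Your proof is correct and follows essentially the same strategy as the paper: the paper reduces the homomorphism check to the generators $\gamma\in\Gamma$ and $\rho^{\pm1}$ and declares the identities clear "by inspection," whereas you perform the equivalent verification directly for arbitrary $g,g'$ via the concatenated decomposition, but in both cases the substance is the preparatory well-definedness argument plus the bookkeeping identity $\bar\pi(g)=\bar\sigma_{p+1}$. Your explicit computation of $\vartheta(\rho)=(\delta_{\bar e},\bar e)$ and $\vartheta(\gamma)=(0,\bar\pi(\gamma))$ and the conjugation trick for surjectivity are exactly the details the paper leaves to the reader.
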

\begin{proof}Note that $\vartheta(e)$ is the identity element in 
$\mathbb Z\wr \overline{\Gamma}$.
To show that $\vartheta$ is an homomorphism, it suffices to check that, for any 
$g\in \langle \Gamma,\rho\rangle$ and  $\gamma\in \Gamma$ 
$$\vartheta( g\gamma)=\vartheta(g)\vartheta(\gamma),\;\;
\vartheta( g\rho^{\pm 1})=\vartheta(g)\vartheta(\rho^{\pm 1}).$$
These identities follow by inspection. One easily check that 
$\vartheta$ is surjective.
\end{proof}

\begin{lem}\label{lem-varthetaconv}
Let $\mu$ be a probability measure supported on 
$\Gamma$ and $\nu$ be the probability measure defined by $\nu(\rho^{\pm1})=1/2$.
Let $\eta$ be the probability measure on $\mathbb Z$ defined by 
$\eta(\pm 1)=1/2$. Let $*$ be convolution on $\langle \Gamma,\rho\rangle<\Gamma_2(N)$ and $\star$ be convolution on $\mathbb Z\wr \overline{\Gamma}$
Then we have
$$\vartheta(\nu*\mu*\nu) = \eta\star \bar{\pi}(\mu)\star \eta.$$ 
\end{lem}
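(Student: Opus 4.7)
The plan is to leverage the fact, just established in Lemma \ref{lem-vartheta}, that $\vartheta$ is a group homomorphism from $\langle \Gamma,\rho\rangle$ onto $\mathbb Z\wr\overline\Gamma$. Any group homomorphism $\theta\colon G\to H$ extends by linearity to an algebra map on the group algebras and therefore intertwines convolution: if $\alpha,\beta$ are probability measures on $G$, then $\theta(\alpha\ast\beta) =\theta(\alpha)\star\theta(\beta)$, where on the right the pushforwards are viewed as measures on $H$. Applying this three times to $\vartheta$, I immediately reduce the statement to proving the two identities
\begin{equation*}
\vartheta(\nu)=\eta\qquad\text{and}\qquad \vartheta(\mu)=\bar\pi(\mu),
\end{equation*}
where $\eta$ is identified, in the standard way, with the measure on $\mathbb Z\wr\overline\Gamma$ supported on the copy of $\mathbb Z$ above $\bar e\in\overline\Gamma$, and $\bar\pi(\mu)$ is identified with its image under the obvious embedding $\overline\Gamma\hookrightarrow\mathbb Z\wr\overline\Gamma$.

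To verify the first identity, I would simply evaluate $\vartheta$ on the two generators of the support of $\nu$. Writing $\rho^{\pm 1}= e_\ast\cdot\rho^{\pm 1}\cdot e_\ast$ (that is, with $p=1$, $\gamma_1=\gamma_2=e_\ast$, $x_1=\pm 1$, hence $\sigma_1=e_\ast$ and $\bar\sigma_1=\bar e$) and using $\bar\pi(\rho)=\bar e$ because $\boldsymbol\rho\in N$, the definition of $\vartheta$ gives
\begin{equation*}
\vartheta(\rho^{\pm 1})=\bigl(\pm\mathbf 1_{\{\bar e\}},\,\bar e\bigr)\in\mathbb Z\wr\overline\Gamma,
\end{equation*}
which is exactly the image of $\pm 1\in\mathbb Z$ inside $\mathbb Z\wr\overline\Gamma$. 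Thus $\vartheta(\nu)=\eta$. For the second identity, take any $\gamma\in\Gamma$ and use the trivial decomposition with $p=0$ (so there is no factor of $\rho$): the lamp coordinate collapses to $0$ and the cursor coordinate is $\bar\pi(\gamma)\in\overline\Gamma$, giving $\vartheta(\gamma)=(0,\bar\pi(\gamma))$. Pushing $\mu$ forward yields $\vartheta(\mu)=\bar\pi(\mu)$, again under the canonical identification.

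Combining the two identities with the homomorphism property $\vartheta(\nu\ast\mu\ast\nu)=\vartheta(\nu)\star\vartheta(\mu)\star\vartheta(\nu)$ produces the claimed equality $\vartheta(\nu\ast\mu\ast\nu)=\eta\star\bar\pi(\mu)\star\eta$. There is no real obstacle here: all the substantive work has already been done in setting up $\vartheta$ and checking (via conditions (i)--(ii) of Definition \ref{def-exclu}) that it is well defined and a homomorphism. The only thing to watch is the harmless convention of identifying $\eta$ and $\bar\pi(\mu)$ with measures on $\mathbb Z\wr\overline\Gamma$ in the standard switch-walk-switch fashion, consistent with the conventions used throughout Section \ref{sub-wr}.
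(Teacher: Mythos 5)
Your proof is correct and follows essentially the same route as the paper: the paper's proof consists precisely of the observation that $\vartheta$ is a homomorphism, that $\vartheta|_\Gamma = \bar{\pi}$, and that $\vartheta(\nu) = \eta$. You merely spell out the verification of these last two facts a bit more explicitly than the paper does.
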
 
\begin{proof} This follows from the fact that $\vartheta$ is an 
homomorphism, $\left. \vartheta\right|_\Gamma=\bar{\pi}$ and $\vartheta(\nu)=\eta$.
\end{proof}

In addition to the canonical projections 
$\pi_j: \mathbf F_r\ra \mathbf F_r/N^{(j)}=\Gamma_j(N)$, for $1\le j\le i$, 
we also consider the projection
$\pi^i_j: \Gamma_i(N)\ra \Gamma_j(N).$

\begin{defin} Fix a presentation $\Gamma_1(N)=\mathbf F_r/N$ and an 
integer $\ell$. Let
$\Gamma_i$ be a finitely generated subgroup of 
$\Gamma_{i}(N)$, $2\le i\le \ell$. Set
$$\Gamma'_{i-1}=\pi^{i}_{i-1}(\Gamma_i),\;\; 2\le i\le \ell.$$ 
Let $\boldsymbol \rho_i \in \mathbf F_r$, $2\le i\le \ell$. Set
$\rho_\ell=\pi_\ell(\boldsymbol \rho_\ell)$.
We say that 
$(\Gamma_i,\boldsymbol \rho_i)_2^\ell$ is an exclusive sequence 
(adapted to $(\Gamma_{i}(N))_1^\ell$) if the following properties hold: 
\begin{enumerate}
\item  $\Gamma_\ell < \Gamma_\ell(N)$ and $\pi^\ell_{\ell-1}(\rho_\ell)$ is trivial.
\item For $2\le j\le \ell-1$,
$\Gamma_{j} < \Gamma'_{j}, \;\; 
\rho_j\in \Gamma'_{j}
$ and
$\pi^j_{j-1}(\rho_j)$ is trivial.
\item For each $2\le i\le \ell$, $(\Gamma_i ,\boldsymbol \rho_i)$ 
is an exclusive pair in $\Gamma_2(N^{(i-1)})= \Gamma_{i}(N)$.  
\end{enumerate}
\end{defin}

\begin{theo} \label{theo-itup}
Fix a presentation $\Gamma_1(N)=\mathbf F_r/N$ and an integer 
$\ell\ge 2$. Assume that there exists an exclusive sequence 
$((\Gamma_i,\boldsymbol \rho_i))_2^\ell$ adapted to $(\Gamma_j(N))_1^\ell$. 
Then 
there exists $k,C\in (0,\infty)$ such that
$$\Phi_{\Gamma_\ell(N)} (kn) \le  C\Phi_{W_{\ell-1}(\mathbb Z,\Gamma'_1)}(n)$$
where $\Gamma'_1=\pi^2_1(\Gamma_2)<\Gamma_1(N)$.
\end{theo}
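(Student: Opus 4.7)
The plan is to iterate the surjective homomorphism $\vartheta$ of Lemma \ref{lem-vartheta} via its wreath-product extensions supplied by Lemma \ref{lem-push1}, descending one derived-series level at each stage until we reach $W_{\ell-1}(\mathbb{Z},\Gamma'_1)$. The crucial preparation is to build, by \emph{downward} induction on $i$, a compatible family of symmetric finitely supported measures $\varphi_i$ on $\Gamma_i$ such that $\pi^i_{i-1}(\varphi_i)=\nu_{i-1}*\varphi_{i-1}*\nu_{i-1}$ for $i>2$, and such that $\bar\varphi_2:=\pi^2_1(\varphi_2)$ has generating support in $\Gamma'_1$. Take $\varphi_2$ to be the uniform measure on a finite symmetric generating set of $\Gamma_2$; for each $i$ with $2<i\le\ell$ use the inclusion $\Gamma_{i-1}\cup\{\rho_{i-1}\}\subset\Gamma'_{i-1}=\pi^i_{i-1}(\Gamma_i)$, guaranteed by the definition of an exclusive sequence, to select lifts $\tilde\rho_{i-1},\tilde g^j_{i-1}\in\Gamma_i$ of $\rho_{i-1}$ and of the generators of $\Gamma_{i-1}$, and define $\varphi_i:=\tilde\nu_{i-1}*\tilde\varphi_{i-1}*\tilde\nu_{i-1}$ using those lifts.

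Set $\mu:=\nu_\ell*\varphi_\ell*\nu_\ell$ on $\Gamma_\ell(N)$. Because $\mu$ is symmetric and finitely supported, \cite[Theorem~2.3]{Pittet2000} gives $\Phi_{\Gamma_\ell(N)}(kn)\le C\mu^{*n}(e_{*_\ell})$. Applying Proposition \ref{Upper} to the exclusive pair $(\Gamma_\ell,\boldsymbol{\rho}_\ell)$ in $\Gamma_\ell(N)=\Gamma_2(N^{(\ell-1)})$ yields
\[
\mu^{*n}(e_{*_\ell})\le(\eta\star_1\bar\varphi_\ell\star_1\eta)^{\star_1 n}(e_{\star_1})
\]
on $\mathbb{Z}\wr\Gamma'_{\ell-1}$. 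Since $\bar\varphi_\ell=\nu_{\ell-1}*\varphi_{\ell-1}*\nu_{\ell-1}$ is supported on $\langle\Gamma_{\ell-1},\rho_{\ell-1}\rangle$, the walk lives inside $\mathbb{Z}\wr\langle\Gamma_{\ell-1},\rho_{\ell-1}\rangle$, so one can push it through the extension $(\vartheta_{\ell-1})_1$ of $\vartheta_{\ell-1}\colon\langle\Gamma_{\ell-1},\rho_{\ell-1}\rangle\to\mathbb{Z}\wr\Gamma'_{\ell-2}$ furnished by Lemma \ref{lem-push1}. By Lemma \ref{lem-push2} combined with Lemma \ref{lem-varthetaconv}, the pushforward measure is exactly $q_2(\eta,\bar\varphi_{\ell-1})$ on $W_2(\mathbb{Z},\Gamma'_{\ell-2})$, and return to the identity of the original walk entails return to the identity of its image. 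Iterating this pushforward step at the $k$-th stage via $(\vartheta_{\ell-k+1})_{k-1}$, we descend through $W_k(\mathbb{Z},\Gamma'_{\ell-k})$ and, after $\ell-1$ steps, reach a walk on $W_{\ell-1}(\mathbb{Z},\Gamma'_1)$ driven by $q_{\ell-1}(\eta,\bar\varphi_2)$. Since $\bar\varphi_2$ generates $\Gamma'_1$, this measure has generating support in $W_{\ell-1}(\mathbb{Z},\Gamma'_1)$, so its return probability is equivalent to $\Phi_{W_{\ell-1}(\mathbb{Z},\Gamma'_1)}$, and the chain of inequalities produces the desired bound.

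The main obstacle will be arranging the compatible family $\{\varphi_i\}$ so that the switch-walk-switch pushforward structure propagates cleanly through every level while each $\varphi_i$ remains symmetric and finitely supported; once this combinatorial setup is in place each homomorphism pushforward introduces no extra multiplicative constant or time rescaling, and the only constants in the final estimate come from the initial Dirichlet form comparison and the final equivalence between $q_{\ell-1}(\eta,\bar\varphi_2)^{\star_{\ell-1}n}(e_{\star_{\ell-1}})$ and $\Phi_{W_{\ell-1}(\mathbb{Z},\Gamma'_1)}(n)$.
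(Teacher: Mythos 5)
Your overall strategy is sound and takes a genuinely different, arguably cleaner, route than the paper's. The paper picks an essentially arbitrary symmetric finitely supported generating measure $\phi_i$ on each $\Gamma_i$ independently and then performs a fresh Dirichlet-form comparison at every one of the $\ell-1$ stages of the descent, comparing the pushforward $\pi^{\ell-j+1}_{\ell-j}(\phi_{\ell-j+1})$ with $\nu_{\ell-j}*\phi_{\ell-j}*\nu_{\ell-j}$ on $W_j(\mathbb Z,\Gamma'_{\ell-j})$ before applying $\vartheta_{\ell-j,j}$. You instead build a compatible family so that $\pi^i_{i-1}(\varphi_i)=\nu_{i-1}*\varphi_{i-1}*\nu_{i-1}$ holds \emph{exactly}; then only one Dirichlet comparison (at the top, via \cite[Theorem 2.3]{Pittet2000}) is needed and the remaining $\ell-2$ steps are pure homomorphism pushforwards via Lemmas \ref{lem-push1}, \ref{lem-push2} and \ref{lem-varthetaconv}. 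Both routes yield the statement; yours concentrates the constants into the first and last steps.

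There is, however, a genuine imprecision in the construction of the compatible family. You propose to lift $\rho_{i-1}$ and the \emph{generators} of $\Gamma_{i-1}$ and then set $\varphi_i:=\tilde\nu_{i-1}*\tilde\varphi_{i-1}*\tilde\nu_{i-1}$. For $i=3$ this is fine, since $\varphi_2$ is uniform on a generating set, so $\tilde\varphi_2$ can be taken uniform on the lifted generators and $\pi^3_2(\varphi_3)=\nu_2*\varphi_2*\nu_2$. But for $i\ge 4$ the measure $\varphi_{i-1}$ is a three-fold convolution whose support is \emph{not} a generating set of $\Gamma_{i-1}$; lifting only the generators does not produce a lift $\tilde\varphi_{i-1}$ of $\varphi_{i-1}$, and taking $\tilde\varphi_{i-1}$ uniform on the lifted generators would give $\pi^i_{i-1}(\varphi_i)=\nu_{i-1}*(\text{uniform on generators of }\Gamma_{i-1})*\nu_{i-1}$ rather than $\nu_{i-1}*\varphi_{i-1}*\nu_{i-1}$, breaking the pushforward chain. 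The fix is easy and keeps your strategy intact: since $\nu_{i-1}*\varphi_{i-1}*\nu_{i-1}$ is finitely supported on $\langle\Gamma_{i-1},\rho_{i-1}\rangle<\Gamma'_{i-1}=\pi^i_{i-1}(\Gamma_i)$, choose a symmetric set-theoretic section $L_i$ of $\pi^i_{i-1}$ over the finite support of $\varphi_{i-1}$ and over $\rho_{i-1}^{\pm1}$ (so that $L_i(g^{-1})=L_i(g)^{-1}$) and set $\tilde\nu_{i-1}=L_i(\nu_{i-1})$ and $\tilde\varphi_{i-1}=L_i(\varphi_{i-1})$. Then $\varphi_i:=\tilde\nu_{i-1}*\tilde\varphi_{i-1}*\tilde\nu_{i-1}$ is symmetric, finitely supported on $\Gamma_i$, and satisfies $\pi^i_{i-1}(\varphi_i)=\nu_{i-1}*\varphi_{i-1}*\nu_{i-1}$ because pushforward along the homomorphism $\pi^i_{i-1}$ commutes with convolution. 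With this repair your argument goes through as written.
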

\begin{rem} The technique and results  of \cite{Erschler2006} provides good upper bounds on $\Phi_G$ when $G$ is an iterated wreath product such as 
$W_{\ell-1}(\mathbb Z,\Gamma'_1)$ and we have some information on $\Gamma'_1$. 
The real difficulty in applying the theorem above lies in finding 
an exclusive sequence.
\end{rem}
\begin{proof} The Theorem follows immediately from the following two lemmas. 
\end{proof}
We will need the following notation. For each $1\le i\le \ell$, let $\phi_i$ be a symmetric 
finitely supported probability measure on $\Gamma_ i$ with generating support.
Let $\mu_\ell=\pi(\boldsymbol \mu)$ be the projection on $\Gamma_\ell(N)$ 
of the lazy symmetric simple random walk probability measure on $\mathbf F_r$.
Let $\nu_i$ be the probability measure on $\Gamma_i(N)$ given by 
$\nu_i(\rho_i^{\pm 1})=1/2$.  

\begin{lem}Under the hypothesis of {\em Theorem \ref{theo-itup}},  
there are $k,C\in (0,+\infty)$ such that
$$\Phi_{\Gamma_\ell(N)}(kn)\le C \Phi_{W_1(\mathbb Z,\Gamma'_{\ell-1})}(n).$$
\end{lem}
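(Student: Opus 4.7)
The plan is to reinterpret $\Gamma_\ell(N)$ as a group of the form $\Gamma_2(M)$ and then invoke the single-step upper bound Theorem \ref{theo-Upper} applied to the exclusive pair $(\Gamma_\ell,\boldsymbol\rho_\ell)$ sitting at the top of the exclusive sequence. Since $N^{(\ell)}=[N^{(\ell-1)},N^{(\ell-1)}]$, setting $M=N^{(\ell-1)}$ yields $\Gamma_\ell(N)=\mathbf F_r/[M,M]=\Gamma_2(M)$; under this identification $\Gamma_1(M)=\Gamma_{\ell-1}(N)$, and the canonical Magnus projection $\bar\pi:\Gamma_2(M)\to\Gamma_1(M)$ coincides with $\pi^\ell_{\ell-1}$. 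In particular the subgroup $\overline{\Gamma_\ell}=\bar\pi(\Gamma_\ell)$ appearing in Theorem \ref{theo-Upper} is precisely $\pi^\ell_{\ell-1}(\Gamma_\ell)=\Gamma'_{\ell-1}$, and clause (3) of the exclusive sequence definition asserts that $(\Gamma_\ell,\boldsymbol\rho_\ell)$ is an exclusive pair in $\Gamma_2(M)$ in the sense of Definition \ref{def-exclu}.

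Next I would select the auxiliary measure $\varphi$ needed to feed into Theorem \ref{theo-Upper}. Since $\Gamma_\ell$ is finitely generated, take $\varphi$ to be the uniform probability measure on a symmetric finite generating set $T$ of $\Gamma_\ell$. Then $\bar\varphi=\pi^\ell_{\ell-1}(\varphi)$ is symmetric, finitely supported, and generates $\Gamma'_{\ell-1}$, so the switch-walk-switch measure
\[
q=\eta\star\bar\varphi\star\eta
\]
is a symmetric finitely supported probability measure with generating support on $W_1(\mathbb Z,\Gamma'_{\ell-1})=\mathbb Z\wr\Gamma'_{\ell-1}$. By the quasi-isometry invariance of $\Phi$ this gives $q^{\star 2n}(e_\star)\simeq\Phi_{W_1(\mathbb Z,\Gamma'_{\ell-1})}(n)$.

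The remaining hypothesis $\mathcal E_{\nu_\ell\ast\varphi\ast\nu_\ell}\le C_0\mathcal E_{\mu_\ell}$ of Theorem \ref{theo-Upper} is a standard Dirichlet-form comparison: every element in the support of $\nu_\ell\ast\varphi\ast\nu_\ell$ is a product of at most one $\rho_\ell^{\pm1}$, one element of $T$, and one more $\rho_\ell^{\pm1}$, hence has bounded word length in $\Gamma_\ell(N)$ with respect to the generating set supporting $\mu_\ell$, so \cite[Theorem~2.3]{Pittet2000} applies. Invoking Theorem \ref{theo-Upper} then produces constants $k,C\in(0,\infty)$ with
\[
\mu_\ell^{\ast 2kn}(e_\ast)\le C\,q^{\star 2n}(e_\star),
\]
and combining this with $\mu_\ell^{\ast 2n}(e_\ast)\simeq\Phi_{\Gamma_\ell(N)}(n)$ yields the stated bound $\Phi_{\Gamma_\ell(N)}(kn)\le C\,\Phi_{W_1(\mathbb Z,\Gamma'_{\ell-1})}(n)$.

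The one place that requires care rather than any genuinely new idea is the bookkeeping: Definition \ref{def-exclu} was formulated in Section \ref{sec-exclu} with a fixed pair $(\mathbf F_r,N)$, and here it must be applied with $N$ replaced by $M=N^{(\ell-1)}$, so one has to verify that clauses (i)--(ii) of Definition \ref{def-exclu} in that shifted setting are exactly what clause (3) of the exclusive sequence definition supplies. This match is tautological from the definitions, but correctly tracking the tower of projections $\pi^i_j$ and the ambient modules is where any potential for error lies; once this is settled, the argument is a direct invocation of Theorem \ref{theo-Upper}.
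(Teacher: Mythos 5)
Your proof is correct. You reduce to a direct application of Theorem \ref{theo-Upper} by reading $\Gamma_\ell(N)$ as $\Gamma_2(M)$ with $M=N^{(\ell-1)}$, and the bookkeeping you flag (matching clause~(3) of the exclusive-sequence definition with Definition \ref{def-exclu} under the shift $N\to M$) does indeed go through: $\bar\pi$ for the pair $(\mathbf F_r,M)$ is $\pi^\ell_{\ell-1}$, so $\bar\pi(\Gamma_\ell)=\Gamma'_{\ell-1}$, and the Dirichlet-form comparison holds because $\nu_\ell\ast\varphi\ast\nu_\ell$ is symmetric, finitely supported, and $\mu_\ell$ has generating support.

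The paper takes a slightly different route, and it is worth noting why. Rather than invoking Theorem \ref{theo-Upper} (whose proof via Proposition \ref{Upper} and Lemma \ref{Return Expression} is probabilistic, based on an explicit formula for the return probability), the paper first establishes the surjective homomorphism $\vartheta_\ell:\langle\Gamma_\ell,\rho_\ell\rangle\to\mathbb Z\wr\Gamma'_{\ell-1}$ of Lemma \ref{lem-vartheta}, shows $\vartheta_\ell(\nu_\ell\ast\phi_\ell\ast\nu_\ell)=\eta\star\pi^\ell_{\ell-1}(\phi_\ell)\star\eta$ via Lemma \ref{lem-varthetaconv}, and then uses the elementary fact that pushing a symmetric measure forward along a surjective homomorphism can only increase the return probability. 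Mathematically the two are equivalent for this lemma, but the paper's algebraic reformulation through $\vartheta_\ell$ is the one that iterates: the subsequent lemma in Section~6 needs to lift the homomorphism through the wreath-product levels via Lemma \ref{lem-push1}, and Theorem \ref{theo-Upper} is not stated in a form that supports this lifting. So your argument suffices for the base step but would need to be replaced by the $\vartheta$-construction (or something equivalent) to handle the inductive step that follows.
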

\begin{proof} For this proof, let $*$ be convolution on $\Gamma_{\ell}(N)$ and 
$\star$ be convolution on $\mathbb Z\wr \Gamma'_{\ell-1}=W_1(\mathbb Z,\Gamma'_{\ell-1})$, $\Gamma'_{\ell-1}=\pi^\ell_{\ell-1}(\Gamma_\ell)$. 
Since $\nu_\ell*\phi_\ell*\nu_\ell$ is symmetric and finitely supported
on $\Gamma_\ell(N)$, we have the Dirichlet form comparison
$$\mathcal E_{\mu_\ell}\ge c\mathcal E_{\nu_\ell*\phi_\ell*\nu_\ell}.$$
Hence
$$\mu_\ell^{*2kn}(e_*) \le C[\nu_\ell*\phi_\ell*\nu_\ell]^{*2n}(e_*).$$ 
Note that the measure $\nu_\ell*\phi_\ell*\nu_\ell$ lives on $\langle \Gamma_\ell,\rho_\ell\rangle < \Gamma_\ell(N)$.  By Lemma \ref{lem-vartheta},
we have the surjective  homorphism
$\vartheta_\ell: \langle \Gamma_\ell,\rho_\ell\rangle \ra \mathbb Z\wr
\Gamma'_{\ell-1}=W_1(\mathbb Z,\Gamma'_{\ell-1})$.  Hence
$$ 
[\nu_\ell*\phi_\ell*\nu_\ell]^{*2n}(e_*)\le 
[\vartheta_\ell(\nu_\ell*\phi_\ell*\nu_\ell)]^{\star2n}(e_{\star} )$$ 
where $e_{\star}$ is the identity element in 
$W_1( \mathbb Z,\Gamma'_{\ell-1})$.  By Lemma \ref{lem-varthetaconv},
$$[\vartheta_\ell(\nu_\ell*\phi_\ell*\nu_\ell)]^{\star 2n}(e_{\star} )
=(\eta *\pi^\ell_{\ell-1}(\phi_\ell)*\eta]^{\star 2n}(e_{\star} ).$$
This shows that $\Phi_{\Gamma_\ell(N)}(kn)\le C \Phi_{W_1(\mathbb Z,\Gamma'_{\ell-1})}(n)$.  
\end{proof} 
\begin{lem}
Under the hypothesis of {\em Theorem \ref{theo-itup}}, for each $j$, $1\le j\le \ell-2$,
there are $k,C\in (0,+\infty)$ such that
$$\Phi_{W_j(\mathbb Z, \Gamma'_{\ell-j})}(kn)\le C 
\Phi_{W_{j+1}(\mathbb Z,\Gamma'_{\ell-j-1})}(n).$$
\end{lem}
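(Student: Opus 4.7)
The plan is to mimic at the level of iterated wreath products the argument of the previous lemma, but applying the surjective homomorphism $\vartheta$ from Lemma \ref{lem-vartheta} at the innermost base of the tower. Set $i=\ell-j$ (so $2\le i\le \ell-1$) and let
$$\vartheta_i:\langle \Gamma_i,\rho_i\rangle\twoheadrightarrow \mathbb Z\wr \Gamma'_{i-1}$$
be the surjection provided by Lemma \ref{lem-vartheta} applied to the exclusive pair $(\Gamma_i,\boldsymbol\rho_i)$ in $\Gamma_i(N)=\Gamma_2(N^{(i-1)})$; note that the associated base projection is precisely $\pi^i_{i-1}:\Gamma_i(N)\to \Gamma_{i-1}(N)$, so that $\overline{\Gamma_i}=\pi^i_{i-1}(\Gamma_i)=\Gamma'_{i-1}$. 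The definition of an exclusive sequence yields $\Gamma_i<\Gamma'_i$ and $\rho_i\in\Gamma'_i$, so $\langle \Gamma_i,\rho_i\rangle$ sits inside $\Gamma'_i$.

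First, I would choose a symmetric, finitely supported measure $\mu'$ with generating support on $\Gamma'_i$ and recall the measures $\phi_i$ (symmetric, finitely supported, generating on $\Gamma_i$) and $\nu_i$ (uniform on $\{\rho_i^{\pm 1}\}$). A standard generator-length bound gives $\mathcal E_{\nu_i*\phi_i*\nu_i}\le C_0\,\mathcal E_{\mu'}$ on $\Gamma'_i$, and the recursive structure of iterated switch-walk-switch Dirichlet forms transports this into $\mathcal E_{\tilde\sigma}\le C\,\mathcal E_{\sigma}$ on $W_j(\mathbb Z,\Gamma'_i)$, where $\sigma=q_j(\eta,\mu')$ and $\tilde\sigma=q_j(\eta,\nu_i*\phi_i*\nu_i)$. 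Since $\sigma$ has generating support, $\sigma^{\star_j 2n}(e_{\star_j})\simeq\Phi_{W_j(\mathbb Z,\Gamma'_i)}(n)$, and \cite[Theorem 2.3]{Pittet2000} delivers constants $k_1,C_1$ with $\Phi_{W_j(\mathbb Z,\Gamma'_i)}(k_1 n)\le C_1\,\tilde\sigma^{\star_j 2n}(e_{\star_j})$.

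Next, iterating Lemma \ref{lem-push1} extends $\vartheta_i$ to a surjective homomorphism
$$\vartheta_{i,j}:W_j(\mathbb Z,\langle \Gamma_i,\rho_i\rangle)\twoheadrightarrow W_j(\mathbb Z,\mathbb Z\wr \Gamma'_{i-1})=W_{j+1}(\mathbb Z,\Gamma'_{i-1}),$$
and since a homomorphism can only increase the return probability, $\tilde\sigma^{\star_j 2n}(e_{\star_j})\le \vartheta_{i,j}(\tilde\sigma)^{\star_{j+1} 2n}(e_{\star_{j+1}})$. Iterating Lemma \ref{lem-push2} up the tower and applying Lemma \ref{lem-varthetaconv} at the innermost level identifies
$$\vartheta_{i,j}(\tilde\sigma)=q_j\bigl(\eta,\vartheta_i(\nu_i*\phi_i*\nu_i)\bigr)=q_j\bigl(\eta,\eta\star \pi^i_{i-1}(\phi_i)\star \eta\bigr)=q_{j+1}\bigl(\eta,\pi^i_{i-1}(\phi_i)\bigr),$$
the final equality being a direct unwinding of the iterated switch-walk-switch definition. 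Since $\phi_i$ generates $\Gamma_i$ and $\Gamma'_{i-1}=\pi^i_{i-1}(\Gamma_i)$, the pushforward $\pi^i_{i-1}(\phi_i)$ generates $\Gamma'_{i-1}$, so $q_{j+1}(\eta,\pi^i_{i-1}(\phi_i))$ is a symmetric, finitely supported, generating measure on $W_{j+1}(\mathbb Z,\Gamma'_{i-1})$ whose return probability at time $2n$ is $\simeq\Phi_{W_{j+1}(\mathbb Z,\Gamma'_{i-1})}(n)$. Chaining the three estimates then yields the stated inequality.

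The main obstacle is purely bookkeeping: one must verify that the extra $\eta$-factor produced at the innermost base by $\vartheta_i$ combines correctly with the surrounding $q_j(\eta,\cdot)$ construction to yield $q_{j+1}(\eta,\cdot)$ rather than some structurally different measure. This is precisely what Lemmas \ref{lem-push1}--\ref{lem-push2} are designed to handle, but it requires keeping careful track of which convolution symbol $\star_k$ lives on which iterated wreath product at each step of the computation, and of the identification $W_j(\mathbb Z,\mathbb Z\wr \Gamma'_{i-1})=W_{j+1}(\mathbb Z,\Gamma'_{i-1})$ that realigns the tower after the pushforward has inserted one extra layer of $\mathbb Z$ at the innermost position.
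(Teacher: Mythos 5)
Your proposal follows essentially the same argument as the paper's proof: compare Dirichlet forms on $W_j(\mathbb Z,\Gamma'_{\ell-j})$ to pass from a generating measure to the iterated switch-walk-switch measure built on $\nu_{\ell-j}\ast\phi_{\ell-j}\ast\nu_{\ell-j}$; extend $\vartheta_{\ell-j}$ via Lemma \ref{lem-push1} to a surjection onto $W_{j+1}(\mathbb Z,\Gamma'_{\ell-j-1})$; identify the pushforward as $q_{j+1}$ using Lemmas \ref{lem-push2} and \ref{lem-varthetaconv}; and conclude since a surjective homomorphism can only increase the return probability. The only cosmetic difference is that you let $\mu'$ be an arbitrary generating measure on $\Gamma'_{\ell-j}$, while the paper uses the specific measure $\pi^{\ell-j+1}_{\ell-j}(\phi_{\ell-j+1})$; this has no bearing on the argument.
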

\begin{proof} For this proof, we let $\star_j$ denote convolution on 
the iterated wreath product $W_j(\mathbb Z, \Gamma'_{\ell-j})$.
To control $\Phi_{W_j(\mathbb Z,\Gamma'_{\ell-j})}$ from above, 
it suffices to
control from above the probability of return $n\mapsto q_j^{\star_j 2n}(e_
{\star_j})$,
for the iterated switch-walk-switch measure
$q_j$ based on $\eta$ and $\pi^{\ell-j+1}_{\ell-j}(\phi_{\ell-j+1})$.

By a simple comparison of Dirichlet forms on the group 
$W_j(\mathbb Z,\Gamma'_{\ell-j})$, we have
\begin{equation}\label{crux1}
q_j^{\star_j 2kn}(e_
{\star_j})\le C\widetilde{q}_j^{\star_j 2n}(e_
{\star_j})\end{equation}
where $\widetilde{q}_j$ is the iterated switch-walk-switch measure based on 
$\eta$ and $$\nu_{\ell-j}\star_{j}\phi_{\ell-j}\star_j \nu_{\ell-j}$$
supported on $\langle \Gamma_{\ell-j},\rho_{\ell-j}\rangle<\Gamma'_{\ell-j}$.
Consider the surjective homorphism 
$$\vartheta_{\ell-j}: \langle \Gamma_{\ell-j},\rho_{\ell-j}\rangle \ra 
\mathbb Z\wr \Gamma'_{\ell-j-1}.$$
By Lemma \ref{lem-push1}, this homomorphism can be extended to a surjective 
homomorphism 
$$\vartheta_{\ell-j,j}: W_j(\mathbb Z,\langle 
\Gamma_{\ell-j},\rho_{\ell-j}\rangle) \ra W_j(\mathbb Z, \mathbb Z\wr\Gamma'_{\ell-j-1})=W_{j+1}(\mathbb  Z,\Gamma'_{\ell-j-1}).$$
Further, by Lemmas \ref{lem-push2} and \ref{lem-varthetaconv}, we have 
$$\vartheta_{\ell-j,j}(\widetilde{q}_{j})= q_{j+1}$$
since $q_{j+1}$ is the iterated switch-walk-switch measure on $W_{j+1}(\mathbb Z,\Gamma'_{\ell-j-1})$ based on $\eta$ and $\pi^{\ell-j}_{\ell-j-1}(\phi_{\ell-j})$.
This yields
$$ \widetilde{q}_j^{\star_j 2n}(e_
{\star_j}) \le q_{j+1}^{\star_{j+1} 2n}(e_
{\star_{j+1}}) .
$$
This, together with (\ref{crux1}), proves the desired relation between
$\Phi_{W_j(\mathbb Z, \Gamma'_{\ell-j})}$ and $ 
\Phi_{W_{j+1}(\mathbb Z,\Gamma'_{\ell-j-1})}$.
\end{proof}

\subsection{Free solvable groups}
In this section, we conclude the proof of Theorem \ref{th-freesol} by 
proving that, for $d\ge 3$,
$$\Phi_{\mathbf S_{d,r}}(n)\simeq  
\exp \left(- n \left(\frac{ \log_{[d-1]} n}
{\log _{[d-2]} n }\right)^{2/r}\right).$$
The lower bound follows from Corollary \ref{cor-itlow}. By Theorem 
\ref{theo-itup}, it suffices to construct an exclusive sequence 
$(\Gamma_i,\boldsymbol \rho_i)_2^d$ adapted to $(\Gamma_i(N))_1^d$, when
$N=[\mathbf F_r,\mathbf F_r]$ with the property that $\Gamma'_1$ 
is isomorphic to $\mathbb Z^r$. The technique developed in Section 
\ref{sec-exexclu} 
is the key to constructing such an exclusive sequence. 

In fact, we are able to deal with a class of groups that is more general 
than the family $\mathbf S_{d,r}$. Observe that
$\mathbf S_{d,r}=\Gamma_d(\gamma_2(\mathbf F_r))$. More generally, define
$$\mathbf S^c_{d,r}= \Gamma_d(\gamma_{c+1}(\mathbf F_r))=
\mathbf F_r/(\gamma_{c+1}(\mathbf F_r))^{(d)}.$$
Note that $\Gamma_1(\gamma_{c+1}(\mathbf F_r))=
\mathbf F_r/\gamma_{c+1}(\mathbf F_r)$
is the free nilpotent group of nilpotent class $c$ on $r$ generators.
The groups $\mathbf S^c_{d,r}$ are examples of (finite rank) 
free polynilpotent groups. These groups are studied in \cite{Grue} 
where it is proved that they are residually finite. In the notation of 
\cite{Grue}, $\mathbf S^c_{d,r}$ is a free polynilpotent group of 
class row $(c,1,\dots,1)$ with $d-1$ ones following $c$.

Let 
$$D(r,c)=\sum_1^c\sum_{k|m}\mu(k)r^{m/k}$$
where $\mu$ is the M\"obius function. 
The integer $D(r,c)$ is the exponent of polynomial volume growth of 
the free nilpotent group 
$\mathbf F_r/\gamma_{c+1}(\mathbf F_r)$. 
See \cite[Theorem 11.2.2]{MHall} and \cite{dlH}. Note that $D(r,1)=r$.

\begin{theo} \label{th-fs+}
Fix $c\ge 1$, $r\ge 2$ and $d\ge 3$. Let $D=D(r,c)$. We have
$$\Phi_{\mathbf S^c_{d,r}}(n)\simeq \exp \left(-n \left(\frac{\log_{[d-1]} n}
{\log_{[d-2]}n)}\right)^{2/D}\right). $$
\end{theo}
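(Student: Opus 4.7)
The proof follows the same two-sided scheme used for Theorem \ref{th-freesol} in the case $d\ge 3$, with the free nilpotent group $\Gamma_1(N)=\mathbf F_r/\gamma_{c+1}(\mathbf F_r)$ playing the role of $\mathbb Z^r$. Since this base group is finitely generated, nilpotent, and has polynomial volume growth of degree $D=D(r,c)$, the first item of Corollary \ref{cor-itlow} applied with $N=\gamma_{c+1}(\mathbf F_r)$ and $\ell=d$ immediately gives the lower bound
$$\Phi_{\mathbf S^c_{d,r}}(n)\ge \exp\left(-Cn\left(\frac{\log_{[d-1]}n}{\log_{[d-2]}n}\right)^{2/D}\right).$$

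For the matching upper bound, the plan is to invoke Theorem \ref{theo-itup}, which reduces the problem to constructing an exclusive sequence $((\Gamma_i,\boldsymbol\rho_i))_{i=2}^d$ adapted to $(\Gamma_i(N))_1^d$ such that $\Gamma'_1=\pi^2_1(\Gamma_2)$ is a finite-index subgroup of $\Gamma_1(N)$. Granted such a sequence, Theorem \ref{theo-itup} yields $\Phi_{\mathbf S^c_{d,r}}(kn)\le C\Phi_{W_{d-1}(\mathbb Z,\Gamma'_1)}(n)$, and since $\Gamma'_1$ inherits the polynomial volume growth of degree $D$ from $\Gamma_1(N)$, the results of Erschler \cite{Erschler2006} on iterated wreath products of the form $W_{d-1}(\mathbb Z,G)$ with $G$ nilpotent of growth degree $D$ provide exactly the matching upper bound.

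The central task is therefore the iterative construction of the exclusive sequence, and this is where I expect the main difficulty to lie. At level $2$, Proposition \ref{pro-exclunil} already produces an exclusive pair $(\Gamma_2,\boldsymbol\rho_2)$ with $\Gamma_2=\langle s_1^{m_2},\dots,s_r^{m_2}\rangle$ and $\bar\pi(\Gamma_2)$ of finite index in $\Gamma_1(N)$, so $\Gamma'_1$ is finite-index as required. For $i\ge 3$, I would take $\boldsymbol\rho_i=[\boldsymbol\alpha_i,\boldsymbol\beta_i]$ with $\boldsymbol\alpha_i,\boldsymbol\beta_i\in N^{(i-2)}$ chosen so that $\boldsymbol\rho_i\in N^{(i-1)}\setminus N^{(i)}$ (for instance, $\boldsymbol\alpha_i=\boldsymbol\rho_{i-1}$ and $\boldsymbol\beta_i=\mathbf g_i\boldsymbol\rho_{i-1}\mathbf g_i^{-1}$ for a suitable $\mathbf g_i\in \mathbf F_r$ making the two conjugates independent in the abelian quotient $N^{(i-2)}/N^{(i-1)}$), and I would set $\Gamma_i=\langle s_1^{m_i},\dots,s_r^{m_i}\rangle<\Gamma_i(N)$ with $m_{i-1}\mid m_i$, which secures the inclusions $\Gamma_{i-1}<\pi^i_{i-1}(\Gamma_i)$ required by the definition of an exclusive sequence.

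Verification of the exclusive-pair property at level $i$ is then carried out in $\Gamma_2(N^{(i-1)})=\Gamma_i(N)$ with base $\Gamma_{i-1}(N)$ via Proposition \ref{pro-resf}. The residual finiteness of $\Gamma_{i-1}(N)=\mathbf F_r/N^{(i-1)}$, which is a free polynilpotent group and hence residually finite by Gruenberg \cite{Grue}, combined with Lemma \ref{lem-resf}, produces a finite-index normal subgroup $K_{\boldsymbol\rho_i}$ separating the nontrivial proper prefixes of $\boldsymbol\rho_i$ from the identity; taking $m_i$ to be a common multiple of $m_{i-1}$ and $[\Gamma_{i-1}(N):K_{\boldsymbol\rho_i}]$ secures conditions (1)--(2) of Lemma \ref{lem-exclu}, and condition (3) then follows from the abelian-quotient argument of Lemma \ref{lem-Tm}. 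The main obstacle is the level-by-level verification that $\boldsymbol\rho_i$ actually falls outside $N^{(i)}$ and that a prefix $\mathbf u_i$ of $\boldsymbol\rho_i$ with $\mathfrak f_{\boldsymbol\rho_i}((\bar u_i,\bar u_i\bar s_i,\mathbf s_i))\ne 0$ can be chosen so that the distinguished edge stays outside the flow support of $\Gamma_i$ on $\Gamma_{i-1}(N)$; both points rely on an explicit analysis of the action of $\mathbf F_r$ on the abelian quotient $N^{(i-1)}/N^{(i)}$ and on the selection of a sufficiently rich finite abelian quotient of $\Gamma_{i-1}(N)$ at each step.
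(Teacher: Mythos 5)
Your overall scheme is right: the lower bound comes from Corollary \ref{cor-itlow}, and the upper bound reduces via Theorem \ref{theo-itup} to constructing an exclusive sequence with $\Gamma'_1$ of finite index in the free nilpotent base group. But there is a genuine gap in the construction of the exclusive sequence, and it is not a detail that can be patched by ``an explicit analysis'' as you suggest at the end.

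The divisibility condition in your construction goes the wrong way. You set $\Gamma_i=\langle s_1^{m_i},\dots,s_r^{m_i}\rangle$ and impose $m_{i-1}\mid m_i$, claiming this gives $\Gamma_{i-1}<\Gamma'_{i-1}=\pi^i_{i-1}(\Gamma_i)$. But $\Gamma'_{i-1}=\langle \bar s_1^{m_i},\dots,\bar s_r^{m_i}\rangle$, so in (say) the abelianization of $\Gamma_{i-1}(N)$ the condition $\Gamma_{i-1}<\Gamma'_{i-1}$ forces $m_{i-1}\in m_i\mathbb Z$, i.e., $m_i\mid m_{i-1}$ --- the opposite of what you wrote. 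So the effective exponents must \emph{decrease} as the level increases, while the residual-finiteness step at level $i$ (Lemma \ref{lem-resf}) forces $m_i$ to be a multiple of the index $[\Gamma_{i-1}(N):K_{\boldsymbol\rho_i}]$, which pushes $m_i$ to be \emph{large}. This tension cannot be resolved by choosing the $m_i$ cleverly, because the $\boldsymbol\rho_i$, the indices $[\Gamma_{i-1}(N):K_{\boldsymbol\rho_i}]$, and the $m_i$ depend on each other circularly in your scheme. You also need, as part of the exclusive-sequence definition, that $\rho_j\in\Gamma'_j$, and nothing in your construction forces the commutator $\boldsymbol\rho_i=[\boldsymbol\alpha_i,\boldsymbol\beta_i]$ built from conjugates of $\boldsymbol\rho_{i-1}$ to project into the subgroup generated by the $m_i$-th powers at the level below.

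The missing ingredient is the stretching homomorphism of Lemma \ref{lem-stretch}: for suitable $m$, the map $\delta_m(\mathbf s_i)=\mathbf s_i^m$ induces an \emph{injective} endomorphism of each $\Gamma_\ell(N)$. The paper first builds, separately at each level $\ell$ and with no cross-level constraints, an exclusive pair $(H_\ell,\boldsymbol\sigma_\ell)$ where $H_\ell=\langle s_i^{m_\ell}\rangle$ and $\boldsymbol\sigma_\ell\in N^{(\ell-1)}\setminus N^{(\ell)}$ projects to a simple loop in $\Gamma_{\ell-1}(N)$; the exponent $m_\ell$ at level $\ell$ is chosen from the residual-finiteness index of $\Gamma_{\ell-1}(N)$ only. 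Then it sets $M_\ell=m_{\ell+1}\cdots m_d$ and replaces $(H_\ell,\boldsymbol\sigma_\ell)$ by $(\delta_{M_\ell}(H_\ell),\delta_{M_\ell}(\boldsymbol\sigma_\ell))$. Because $\delta_{M_\ell}$ is injective and preserves the exclusive-pair structure, this remains an exclusive pair, and the telescoping products $m_\ell M_\ell$ produce exactly the nested inclusions $\Gamma_{j}<\Gamma'_{j}$ and $\rho_j\in\Gamma'_j$ (the exponents now genuinely decrease from $m_2\cdots m_d$ down to $m_d$ as $\ell$ runs from $2$ to $d$). Without this ``build independently, then stretch'' step, the iteration does not close.
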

\begin{rem} The case $d=2$ is covered by  Theorem \ref{theo-nil}. 
\end{rem}

For the proof of Theorem \ref{th-fs+}, we will use a result concerning  
the subgroup of $\Gamma_\ell(N)$ generated by the images of a fix power 
$\mathbf s_i^m$
of the generators $\mathbf s_i$, $1\le i\le r$. Let $\delta _{m}:\mathbf{F}_{r}\rightarrow \mathbf{F}_{r}$ be the
homomorphism from the free group to itself determined by $\delta _{m}(%
\mathbf{s}_{i})=\mathbf{s}_{i}^{m}$, $1\leq i\leq r$.

\begin{lem}\label{lem-stretch}
Suppose $\delta _{m}$ induces an injective homomorphism $\mathbf{F}%
_{r}/N\rightarrow \mathbf{F}_{r}/N$, and $\pi (\mathbf{s}_{i}^{q})\notin
\delta _{m}(\mathbf{F}_{r}/N)$, $1\leq q\leq m-1$, $1\leq i\leq r$. Then $%
\delta _{m}$ induces an injective homomorphism $\mathbf{F}_{r}/[N,N]\rightarrow \mathbf{F}_{r}/[N,N]$.
\end{lem}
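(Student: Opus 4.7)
The plan is to use the flow interpretation of $\Gamma_2(N)=\mathbf{F}_r/[N,N]$ (Theorem \ref{th-flowdescr}), which reduces everything to a clean geometric picture on the Cayley graph of $\Gamma_1(N)=\mathbf{F}_r/N$. First, since $\delta_m$ already descends to $\mathbf{F}_r/N$, we have $\delta_m(N)\subseteq N$, hence $\delta_m([N,N])=[\delta_m(N),\delta_m(N)]\subseteq [N,N]$, so $\delta_m$ induces a well-defined homomorphism $\bar\delta_m\colon\mathbf{F}_r/[N,N]\to\mathbf{F}_r/[N,N]$. For injectivity I need to show: if $\mathbf{g}\in\mathbf{F}_r$ satisfies $\delta_m(\mathbf{g})\in [N,N]$, then $\mathbf{g}\in [N,N]$. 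By Theorem \ref{th-flowdescr}, this is equivalent to proving $\mathfrak{f}_{\mathbf{g}}\equiv 0$ on the Cayley graph of $\Gamma_1(N)$ under the assumption $\mathfrak{f}_{\delta_m(\mathbf{g})}\equiv 0$.

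The key geometric observation is that $\delta_m$ ``stretches'' paths by the factor $m$. Let $\theta\colon\Gamma_1(N)\to\Gamma_1(N)$ denote the injective endomorphism induced by $\delta_m$. Writing $\mathbf{g}=\mathbf{s}_{i_1}^{\varepsilon_1}\cdots\mathbf{s}_{i_n}^{\varepsilon_n}$ and letting $x_0=\bar e,x_1,\dots,x_n$ be the vertices visited by the path of $\mathbf{g}$ in $\Gamma_1(N)$, the path of $\delta_m(\mathbf{g})=\mathbf{s}_{i_1}^{m\varepsilon_1}\cdots\mathbf{s}_{i_n}^{m\varepsilon_n}$ visits $\theta(x_0),\theta(x_1),\dots,\theta(x_n)$ and, between $\theta(x_{j-1})$ and $\theta(x_j)$, travels along $m$ consecutive $\mathbf{s}_{i_j}$-edges forming a ``strip'' in the linear subgraph $\theta(x_{j-1})\langle\bar s_{i_j}\rangle$. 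I plan to establish the identity
\[
\mathfrak{f}_{\delta_m(\mathbf{g})}\bigl(\theta(x)\bar s_i^{q},\theta(x)\bar s_i^{q+1},\mathbf{s}_i\bigr)=\mathfrak{f}_{\mathbf{g}}(x,x\bar s_i,\mathbf{s}_i)
\]
for every $x\in\Gamma_1(N)$, every $1\le i\le r$, and every $0\le q\le m-1$; specializing to $q=0$ then immediately finishes the argument.

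The essential use of the second hypothesis is in showing that strips coming from different steps of the path of $\mathbf{g}$ cannot share a stretched edge and thus cannot interfere with each other's contributions to the displayed coefficient. Indeed, if steps $j$ and $j'$ with $i_j=i_{j'}=i$ both contributed to the coefficient of the same edge $(\theta(x)\bar s_i^{q},\theta(x)\bar s_i^{q+1},\mathbf{s}_i)$, tracking the endpoints of the corresponding sub-strips (and using injectivity of $\theta$ to handle the single-strip case) would give a relation $\theta(y)^{-1}\theta(y')=\bar s_i^{q''-q'}$ for some $q',q''\in\{0,\dots,m-1\}$ and some $y,y'\in\Gamma_1(N)$, i.e.\ $\bar s_i^{q''-q'}\in\theta(\Gamma_1(N))$; the hypothesis $\pi(\mathbf{s}_i^{p})\notin\delta_m(\Gamma_1(N))$ for $1\le p\le m-1$ forces $q'=q''$, and then $y=y'$.

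The main obstacle is more tedious than conceptual: it is the careful sign and orientation bookkeeping needed to verify the displayed identity. Specifically, one must track how each $\varepsilon_j=-1$ step of the path of $\mathbf{g}$ contributes (with the appropriate sign) to each of the $m$ levels $q\in\{0,\dots,m-1\}$ of its stretched strip; this involves the substitution $\theta(x_{j-1})\bar s_i^{-\ell}=\theta(x_j)\bar s_i^{m-\ell}$ to re-express backward-oriented edges in terms of the natural base $\theta(x_j)$, so that every step contributes once, and only once, to each level $q$, with the coefficient identity then being a straightforward accounting.
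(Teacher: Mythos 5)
Your plan is correct and matches the paper's proof: the identity you propose to establish — that $\mathfrak{f}_{\delta_m(\mathbf{g})}$ at level $q$ of the strip based at $\theta(x)$ equals $\mathfrak{f}_{\mathbf{g}}(x,x\bar{s}_i,\mathbf{s}_i)$ — is exactly the paper's assertion $\mathfrak{f}_{\delta_m(\mathbf{g})}=t_m\mathfrak{f}_{\mathbf{g}}$ for the ``stretching'' operator $t_m$ on flows, and you invoke the two hypotheses precisely as the paper does (injectivity of $\theta=\delta_m$ on $\Gamma_1(N)$, plus $\bar{s}_i^q\notin\theta(\Gamma_1(N))$ for $1\le q<m$) to guarantee that distinct strips and distinct levels never collide, so $t_m$ is well defined and injective. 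The only stylistic divergence is that the paper obtains the identity not by direct path-tracking but by observing that $t_m$ is additive and intertwines left translation $\tau_y$ with $\tau_{\delta_m(y)}$, then runs a clean induction on word length through the composition law $\mathfrak{f}_{\mathbf{uv}}=\mathfrak{f}_{\mathbf{u}}+\tau_{\pi(\mathbf{u})}\mathfrak{f}_{\mathbf{v}}$, which sidesteps most of the sign and orientation bookkeeping you flag as the main obstacle.
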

\begin{proof} The proof is based on the representation of the elements of 
$\Gamma_2(N)=\mathbf F_r/[N,N]$ using flows on the labeled Cayley graph of 
$\Gamma_1(N)=\mathbf F_r/N$.

Let $\delta_m$ also denote the induced injective homomorphism on $\Gamma_1(N)$.
Let $\mathfrak{f}$ be a flow function defined on edge set $\mathfrak E$ 
of Cayley graph of $%
\Gamma _{1}(N)$. Let $\mathfrak E_{m}$ be a subset of $\mathfrak E$ given by%
\[
\mathfrak E_{m}=\{(\delta _{m}(x)s_{i}^{j},\delta _{m}(x)s_{i}^{j+1},\mathbf{s}%
_{i}):x\in \Gamma _{1}(N),0\leq j\leq m-1,1\leq i\leq r\}. 
\]%
Let $t_{m}: \mathfrak f\mapsto t_m\mathfrak f$ be the  map on flows defined by 
\[
t _{m}\mathfrak{f}((\delta _{m}(x)s_{i}^{j},\delta _{m}(x)s_{i}^{j+1},%
\mathbf{s}_{i}))=\mathfrak{f}((x,xs_{i},\mathbf{s}_{i})),\;\;0\leq j\leq m-1, 
\]%
and $t_{m}\mathfrak{f}$ is $0$ on edges not in $\mathfrak E_{m}$. 
This map
is well-defined. Indeed, if two pairs $(x,j)$ and $(y,j^{\prime })$ in 
$\Gamma_1(N)\times\{0,\cdots, m-1\}$ correspond to a common edge, 
that is, $$(\delta _{m}(x)s_{i}^{j},\delta
_{m}(x)s_{i}^{j+1},\mathbf{s}_{i})=(\delta _{m}(y)s_{i}^{j^{\prime }},\delta
_{m}(y)s_{i}^{j^{\prime }+1},\mathbf{s}_{i}),$$ then $\delta
_{m}(x)s_{i}^{j}=\delta _{m}(y)s_{i}^{j^{\prime }}$, $\delta
_{m}(y^{-1}x)=s_{i}^{j^{\prime }-j}$. Since $\left\vert j^{\prime
}-j\right\vert \leq m-1$, from the assumption $\pi (\mathbf{s}%
_{i}^{q})\notin \delta _{m}(\mathbf{F}_{r}/N)$, $1\leq q\leq m-1$ it follows
that $j^{\prime }=j$. Then $\delta _{m}(y^{-1}x)=\overline{e}$ and, since $%
\delta _{m}$ is injective, we must have $x=y$.

By definition, $t _{m}$ is additive in the sense that $$t
_{m}(\mathfrak{f}_{1}+\mathfrak{f}_{2}\mathfrak{)=}t _{m}\mathfrak{f}%
_{1}+t _{m}\mathfrak{f}_{2}.$$ 
Also, regarding translations in $\Gamma_1(N)$, we have 
$$%
t _{m} \tau _{y}\mathfrak{f}= \tau _{\delta _{m}(y)}t _{m}%
\mathfrak{f}.$$ 
Therefore the identity $\mathfrak{f}_{\mathbf{uv}}=\mathfrak{f}_{%
\mathbf{u}}+\tau _{\pi (\mathbf{u})}\mathfrak{f}_{\mathbf{v}}$, of Remark \ref{rem-alg-a} yields
\[
t _{m} \mathfrak{f}_{\mathbf{uv}}=t _{m}\mathfrak{f}_{\mathbf{u}%
}+\tau _{\delta _{m}(\pi (\mathbf{u}))}t _{m}\mathfrak{f}_{\mathbf{v}}.
\]%
By assumption  $\pi (\delta _{m}(\mathbf{u}%
))=\delta _{m}(\pi (\mathbf{u}))$, therefore%
\[
t _{m}\mathfrak{f}_{\mathbf{uv}}=t _{m}\mathfrak{f}_{\mathbf{u}%
}+\tau _{\pi (\delta _{m}(\mathbf{u}))}t _{m}\mathfrak{f}_{\mathbf{v}}.
\]
This identity allows us to check that the definition of $t _{m}$
acting on flows is consistent with 
$\delta _{m}:\mathbf{F}_{r}\rightarrow \mathbf{F}_{r}$. More precisely, 
for any $\mathbf{g}\in \mathbf{F}_{r}$, we have $$
\mathfrak{f}_{\delta _{m}(\mathbf{g})} = t_{m}\mathfrak{f}_{%
\mathbf{g}}.$$
To see this, first note that this formula holds true on the generators
and their inverses and proceed  by induction on the word length of 
$\mathbf{g}\in \mathbf{F}_{r}$. 

Given $g\in \Gamma_2(N)$, pick a representative $\mathbf g\in \mathbf F_r$
so that $g$ corresponds to the flow $\mathfrak f_\mathbf g$ on $\Gamma_1(N)$.
Define $\widetilde{\delta}_m(g)$ to be the element of 
$\Gamma_2(N)$ that corresponds to the flow $t_m\mathfrak f_\mathbf g=
\mathfrak f_{\delta_m(\mathbf g)}.$ This map is well defined
and satisfies 
$$\widetilde{\delta}_m \circ \pi_2= \pi_2\circ \delta_m.$$
This implies that $\widetilde{\delta}_m: \Gamma_2(N)\ra \Gamma_2(N)$
is an injective homomorphism. Abusing notation, we will drop the 
$\,\widetilde{}\,$ and use the same name, $\delta_m$, for the injective 
homomorphisms $\Gamma_1(N)\ra \Gamma_1(N)$ and $\Gamma_2(N)\ra \Gamma_2(N)$ 
induced by $\delta_m:\mathbf F_r\ra \mathbf F_r$.
\end{proof}

\begin{proof}[Proof of Theorem \ref{th-fs+}]
The lower bound follows from Corollary \ref{cor-itlow}. By Theorem 
\ref{theo-itup}, in order to prove the upper bound, 
it suffices to construct an exclusive sequence 
$(\Gamma_\ell,\boldsymbol \rho_\ell)_2^d$ adapted to $(\Gamma_\ell(N))_1^d$,
$N=\gamma_{c+1}(F_r)$, and  with the property that $\Gamma'_1$ 
is isomorphic to $\Gamma_1(N)=\mathbf F_r/\gamma_{c+1}(\mathbf F_r)$, 
the free nilpotent group of class $c$ on $r$ generators. 

The work of Gruenberg, \cite[Theorem 7.1]{Grue} implies that
$\Gamma_\ell(N)$ is residually finite. Hence the
technique developed in Section \ref{sec-exexclu} apply easily to this 
situation. We are going to use repeatedly Proposition \ref{pro-resf}.

To start, for each $\ell$, we construct an exclusive pair 
$(H_\ell,\boldsymbol \sigma_\ell)$ in $\Gamma_\ell(N)$. Namely,
let $\boldsymbol \sigma_\ell$ be an element in 
$N^{(\ell-1)}\setminus N^{(\ell)}$
in reduced form in $\mathbf F_r$ and such that it projects to a  
non-self-intersecting loop in $\Gamma_{\ell-1}(N)$. Let 
$\mathbf s_{i_1}^k\mathbf s_{i_2}^{\epsilon}$, $\epsilon=\pm 1$, $k\neq 0$, 
be begining of $\boldsymbol \sigma_\ell$. 
Without loss of generality, 
we  assume that $i_1=1$, $i_2=2$ and $\epsilon=1$. Let also $s_i$ and 
$\bar{s}_i$ be the projections of $\mathbf s_i$ onto $\Gamma_\ell(N)$ 
and $\Gamma_{\ell-1}(N)$, respectively.
Let
$(\bar{s}_1^k,\bar{s}_1^k\bar{s}_2,\mathbf s_2)$
be the corresponding edge in $\Gamma_{\ell-1}(N)$. 
Since $\boldsymbol \sigma_\ell$ projects to a simple loop in 
$\Gamma_{\ell-1}(N)$, we must have
$$\mathfrak f_{\boldsymbol \sigma_\ell}((\bar{s}_1^k,\bar{s}_1^k\bar{s}_2,\mathbf s_2)
)\neq 0.$$
Since $\Gamma_{\ell-1}(N)$
is residually finite, there exists a finite index normal subgroup $K_
{\boldsymbol \sigma_\ell} \vartriangleleft 
\Gamma _{\ell-1}(N)$ as in Lemma 
\ref{lem-resf}.  Pick an integer $m_\ell$ such that
$$[\Gamma
_{\ell-1}(N):K_{\mathbf{\sigma }_{\ell}}]\mid m_{\ell}\;\; \mbox{ and }\;\;
\left\vert k\right\vert <m_{\ell}$$ 
and set
$$H _{\ell}=\left\langle s_{i}^{m_{\ell}},1\leq i\leq
r\right\rangle <\Gamma _{\ell}(N).$$
Thinking of $\Gamma_{\ell}(N)$ and $\Gamma_{\ell-1}(N)$ as
$\Gamma_2(N^{(\ell-1)})$ and $\Gamma_1(N^{(\ell-1)})$, respectively, 
Proposition \ref{pro-resf} shows that  $(H _{\ell},
\mathbf{\sigma }_{\ell})$ is an
exclusive pair in $\Gamma _{\ell}(N)$.  

Next, by Lemma \ref{lem-stretch}, for each integer $m$ and each $\ell$, 
the injective homomorphism $\delta_m:\mathbf F_r\ra \mathbf F_r$
induces on $\Gamma_\ell(N)$ an injective homomorphism still denoted by 
$\delta_m: \Gamma_\ell(N)\ra \Gamma_\ell(N)$. For each $1\le \ell\le d$, set
$$M_d=1,\;\;M_\ell= m_{\ell+1} \cdots m_d,$$
and, for $2\le \ell\le d$,
$$ \Gamma_\ell= \delta_{M_{\ell}} (H_\ell) < \Gamma_\ell(N),\;\;
\boldsymbol \rho_\ell=\delta_{M_\ell} (\boldsymbol \sigma_\ell).$$
By construction, $((\Gamma_\ell,\rho_\ell))_2^d$ is an exclusive sequence in 
$(\Gamma_\ell(N))_1^d$ and 
$$\Gamma_1'=\pi^2_1(\Gamma_2)=\langle \bar{s_1}^{M_1}, \dots, \bar{s}^{M_1}_r \rangle < \Gamma_1(N)$$
is isomorphic to $\Gamma_1(N)$ because $\Gamma_1(N)$ is the free nilpotent group on 
$\bar{s}_1,\dots,\bar{s}_r$ of nilpotent class $c$.
\end{proof}


\providecommand{\bysame}{\leavevmode\hbox to3em{\hrulefill}\thinspace}
\providecommand{\MR}{\relax\ifhmode\unskip\space\fi MR }
\providecommand{\MRhref}[2]{%
  \href{http://www.ams.org/mathscinet-getitem?mr=#1}{#2}
}
\providecommand{\href}[2]{#2}

\end{document}